\renewcommand*\libertine@figurestyle{LF}
\renewcommand*\libertine@figurestyle{OsF}
\newtheorem{theorem}{Theorem}[section]
\newtheorem{lemma}[theorem]{Lemma}
\newtheorem{proposition}[theorem]{Proposition}
\newtheorem{corollary}[theorem]{Corollary}
\theoremstyle{definition}
\newtheorem{definition}[theorem]{Definition}
\newtheorem{remark}[theorem]{Remark}
\newtheorem{notation}[theorem]{Notation}
\newcommand{\cor}[1]{\bigg{\langle} \,  #1 \, \bigg{\rangle}}
\newcommand{\diff}{\;\textrm{d}}
\newcommand{\RM}[1]{%
    \textup{\uppercase\expandafter{\romannumeral#1}}%
  }
\DeclareMathOperator\Aut{Aut}
\def\Res{\mathrm{Res}}
\def\Q{\mathbb{Q}}
\def\C{\mathbb{C}}
\def\J{\mathrm{J}}
\def\CP1{\mathbb{C}\mathrm{P}^1}
\def\paragraph{\@startsection{paragraph}{4}%
  \z@\z@{-\fontdimen2\font}%
  {\normalfont\itshape}}
\DeclareRobustCommand{\stirling}{\genfrac\{\}{0pt}{}}
\DeclareRobustCommand{\fstirling}{\genfrac[]{0pt}{}}
\numberwithin{equation}{section}
\title[Triply mixed coverings of arbitrary base curves]{Triply mixed coverings of arbitrary base curves: Quasimodularity, quantum curves and a mysterious topological recursions}
\author[M.~A.~Hahn]{Marvin Anas Hahn}
\address{M.~A.~Hahn: Institut für Mathematik, Goehte-Universität Frankfurt, Robert-Mayer-Str. 6-8, 60325 Frankfurt am Main}
\email{hahn@math.uni-frankfurt.de}
\author[J.~W.~M.~van~Ittersum]{Jan-Willem M. van Ittersum}
\address{J.~W.~M.~v.~Ittersum: Mathematisch Instituut, Universiteit Utrecht, Postbus 80.010, 3508 TA Utrecht, The Netherlands; 
Max-Planck-Institut f\"ur Mathematik, Vivatsgasse 7, 53111 Bonn, Germany}
\email{j.w.m.vanittersum@uu.nl}
\author[F.~Leid]{Felix Leid}
\address{F.~Leid.: Universit\"at des Saarlandes, Fachrichtung Mathematik, Postfach 151150, 66041 Saarbr\"ucken, Germany}
\email{leid@math.uni-sb.de}
\keywords{Hurwitz numbers, quasimodular forms, quantum curves, recursions}
\subjclass[2010]{Primary: 14N10, 14T05, 11F11, 81S10 Secondary: 05A05, 05A15, 05E10, 14N35, 32G15}
\begin{document}
\begin{abstract}
Simple Hurwitz numbers are classical invariants in enumerative geometry counting branched morphisms between Riemann surfaces with fixed ramification data. In recent years, several modifications of this notion for genus $0$ base curves have appeared in the literature. Among them are so-called monotone Hurwitz numbers, which are related to Harish-Chandra--Itzykson--Zuber integral in random matrix theory and strictly monotone Hurwitz numbers which enumerate certain Grothendieck dessins d'enfants. We generalise the notion of Hurwitz numbers to interpolations between simple, monotone and strictly monotone Hurwitz numbers for arbitrary genera and any number of arbitrary but fixed ramification profiles. This yields generalisations of several results known for Hurwitz numbers. When the target surface is of genus one, we show that the generating series of these interpolated Hurwitz numbers are quasimodular forms. In the case that all ramification is simple, we refine this result by writing this series as a sum of quasimodular forms corresonding to tropical covers weighted by Gromov-Witten invariants. Moreover, we derive a quantum curve for monotone and Grothendieck dessins d'enfants Hurwitz numbers for arbitrary genera and one arbitrary but fixed ramification profile. Thus, we obtain spectral curves via the semiclassical limit as input data for the Chekhov--Eynard--Orantin (CEO) topological recursion. Astonishingly, we find that the CEO topological recursion for the genus $1$ the spectral curve of the strictly monotone Hurwitz numbers compute the monotone Hurwitz numbers in genus $0$. Thus, we give a new proof that monotone Hurwitz numbers satisfy CEO topological recursion. This points to an unknown relation between those enumerants. Finally, specializing to target surface $\mathbb{P}^1$, we find recursions for monotone and Grothendieck dessins d'enfants double Hurwitz numbers, which enables the computation of the respective Hurwitz numbers for any genera with one arbitrary but fixed ramification profile.
\end{abstract}
\maketitle
%


\section{Introduction}
Hurwitz numbers are enumerations of branched morphisms between Riemann surfaces with fixed ramification data. They were first introduced by Adolf Hurwitz in the late 19th century \cite{Hurwitzverzweigung} as coverings of the Riemann sphere. As observed by Hurwitz himself, these enumerations are closely related to the combinatorics of the symmetric group \cite{Hurwitzsymmetric}. In particular this yields an elegant interpretation of Hurwitz numbers in terms of factorisations in the symmetric group. In the last two decades Hurwitz numbers have branched out into several areas of mathematics, such as algebraic geometry, Gromov-Witten theory, algebraic topology, representation theory of the symmetric group, operator theory, integrable systems, random matrix models, tropical geometry and many more.\par
Several specifications of Hurwitz numbers with respect to the genera of the involved Riemann surfaces and the ramification profiles of the morphisms have proved to be of particular interest. A common theme is to allow a finite number of arbitrary but fixed ramification profiles and a simple ramification profile everywhere else. Hurwitz numbers obtained by such specifications are called \textit{simple Hurwitz numbers}. Among the most important ones are \textit{single} and \textit{double} Hurwitz numbers. \par
Additionally, there are variants of Hurwitz numbers obtained by counting factorisations in the symmetric group as above, but with additional conditions. So far, these have been studied for target surfaces of genus $0$. Two of the most important cases are \textit{monotone} and \textit{strictly monotone} Hurwitz numbers, the latter of which are also called \textit{Grothendieck dessins d'enfants Hurwitz numbers} \cite{GGPNmonotone,DKmonotone,KLSmonotonewedge,DMstrictly,GGPNpoly, Hahnmonodromy,HKLtriply}. Monotone Hurwitz numbers appear as coefficients in the expansion of the HCIZ integral in random matrix theory \cite{GGPNmonotone}, while strictly montone Hurwitz numbers are equivalent to counting certain Grothendieck dessins d'enfants \cite{ALSramifications, KLSmonotonewedge}.

In \cite{HKLtriply} a combinatorial interpolation between simple, monotone and strictly monotone Hurwitz numbers was introduced for genus $0$ target surfaces and two arbitrary but fixed ramification profiles. This interpolation is called \textit{triply mixed Hurwitz numbers}. In this work, we generalise the notion of triply mixed Hurwitz numbers to 
arbitrary genera and any number of arbitrary but fixed ramification profiles. We study these objects from several perspectives. This yields generalisations of various results known for simple Hurwitz numbers, which in particular specialise to the extremal cases of monotone and Grothendieck dessins d'enfant Hurwitz numbers.

\subsection{Previous results}
We summarize some of the previous results on several kinds of Hurwitz numbers, which motivated our work.

\subsubsection*{Quasimodularity}
Hurwitz numbers with a target surface of genus $1$ can be expressed in terms of so-called shifted symmetric polynomials (see \cref{sec:shiftedsymmetric}). The Bloch--Okounov theorem \cite{BO00}, initally proved in the special cases corresponding to Hurwitz numbers for which all ramification are simple \cite{Dij95,KZ95}, implies that certain generating series associated to shifted symmetric polynomials are quasimodular. This implies that the generating series of (connected) Hurwitz numbers with a target surface of genus $1$ are quasimodular, as noted in \cite{EO01}. From the discussion at the end of Section 3 in \cite{GGPNpoly}, it follows that monotone Hurwitz numbers are shifted symmetric polynomials as well.

\subsubsection*{Refined quasimodularity and tropical covers}
A common theme in tropical geometry is to express geometric enumerative problems in terms of weighted graphs. In \cite{BBBMmirror}, Hurwitz numbers with only simple ramification were related to so-called \textit{tropical covers}, i.e. piecewise linear maps between metric graphs. In particular, Hurwitz numbers were expressed as a finite sum of weighted tropical covers. This led to the conjecture (which was proved in \cite{GMNquasimodular}), that each generating series obtained by considering all covers with source curves of a fixed combinatorial type are quasimodular as well. This refines the aforementioned result of \cite{Dij95,KZ95}. A tropical interpretation of monotone and strictly monotone double Hurwitz numbers for genus $0$ target surfaces was first found in \cite{DKmonotone,Hahnmonodromy} by equipping the involved tropical covers with an additional colouring and labeling. Motivated by this work, Lewanski and the first author derived a different interpretation of monotone and strictly monotone double Hurwitz numbers for genus $0$ target surfaces, which is more natural in the sense that it does not require additional colouring and labeling. The involved covers are now weighted by $1-$point relative Gromov-Witten invariants.

\subsubsection*{Topological recursion and quantum curves}
In recent years, one of the most fruitful interactions with Hurwitz theory has been from the viewpoint of Chekhov--Eynard--Orantin (CEO) topological recursion (see \cite{EMSlaplace}). CEO topological recursion associates to a spectral curve a family of differentials, which satisfies a certain recursion. Remarkably enough, for many enumerative invariants one can find spectral curves, such that the associated differentials encode these invariants as coefficients in local expansions. One says that such an enumerative problem \textit{satisfies CEO topological recursion}. It turns out that certain kinds of Hurwitz numbers satisfy CEO topological recursion \cite{EMSlaplace,DLNorbifold,DDMmonotone,DKmonotone,Norburyquantum, DMSSspectral,DOPSstrict,KZvirasoro}. A direct consequence of CEO topological recursion is an interpretation of the enumerative problem in terms of intersection products on the moduli space of stable curves with marked points $\overline{\mathcal{M}}_{g,n}$.\par 
One approach to CEO topological recursion which has been very successful is in terms of so-called quantum curves. Given an enumerative problem, an associated quantum curves is a certain partial differential equation, which is satisfied by a generating series of the initial enumeration. This quantum curve is often time an indication for the shape of the spectral curve for which one then runs CEO topological recursion.

\subsection{Results of this paper}
First of all we define our new enumerative object, i.e. \textit{triply mixed Hurwitz numbers} for target surfaces of higher genera. We then study several specifications of these numbers from various perspectives.

\subsubsection*{Quasimodularity}
When the target surface is of genus $1$, we express triply mixed Hurwitz numbers in terms of shifted symmetric functions. This allows to prove that the generating series of triply mixed Hurwitz numbers are quasimodular forms of mixed weight. Our results are summarised in \cref{thm:genquasi}.

\subsubsection*{Refined Quasimodularity}
We use the aforementioned result of Lewanski and the first author and derive an expression of monotone and Grothendieck dessins d'enfants Hurwitz numbers for genus $1$ target surfaces and only simple ramification in terms of tropical covers. This enables us to prove that fixing the combinatorial type of the source curve of the tropical covers yields a quasimodular form. This refines the quasimodularity esult for triply mixed Hurwitz numbers analogously to the simple case. The tropical correspondence theorem is stated in \cref{thm:tropel} and the refined quasimodularity statement in \cref{thm:quasi}.

\subsubsection*{Quantum curves}
Motivated by work of Liu-Mulase-Sorkin \cite{LMSsimplequantum}, we derive a quantum curve for monotone and Grothendieck dessins d'enfants Hurwitz numbers for arbitrary genera and one arbitrary but fixed ramification profile. The result for the monotone enumerations can be found in \cref{thm:mono}.

\subsubsection*{A mysterious topological recursion}
We consider the the quantum curve for Gorthendieck dessins d'enfants Hurwitz numbers for genus $1$ base curves. This quantum curve yields a spectral curve via its semi-classical limit. We use this spectral curve as input data for the CEO topological recursion. Astonishingly, we prove in \cref{sec:myst} that the expansion of the resulting differentials yield the monotone Hurwitz numbers for genus $0$ base curves, however for a different normalisation and different spectral curve than the one in \cite{DDMmonotone}. This points towards an unknown relation between the strictly monotone numbers in genus $1$ and the monotone ones in genus $0$.

\subsubsection*{Further recursive procedures}
We prove recursions for refinements of monotone and Grothendieck dessins d'enfants double Hurwitz numbers for the target surface $\mathbb{P}^1$, which yields the Hurwitz numbers for any genera with one arbitrary but fixed ramification profile. This generalises the recursion for monotone orbifold Hurwitz numbers in \cite{DKmonotone}. The explicit recursions can be found in \cref{thm:recur}.

\subsubsection*{Structure of the paper}
In \cref{sec:pre}, we recall the relevant notions needed for our discussion. Triply mixed Hurwitz numbers are introduced in \cref{sec:triplymixed}. Quasimodularity of these Hurwitz numbers is shown in \cref{sec:quasimodularity} and refined quasimodularity is discussed in \cref{sec:refined}.
In \cref{sec:quan} we derive quantum curves for both monotone and Grothendieck dessins d'enfants Hurwitz numbers. In \cref{sec:myst}, we discuss the special case of the quantum curve for Grothendieck dessins d'enfants Hurwitz numbers with elliptic base curve and prove that CEO topological recursion for the semiclassical limit of this quantum curve computes monotone Hurwitz numbers with rational base curve. Finally, we derive the recursions for refinements of monotone Grothendieck dessins d'enfants double Hurwitz numbers in \cref{sec:refine}. We collected several examples of quasimodular generating series of triply mixed Hurwitz numbers in \cref{sec:examples}.

\subsection{Acknowledgements}
The authors thank Hannah Markwig for her careful proofreading and many helpful discussions and James Mingo for the fruitful discussions on the results of \cref{sec:myst} during the program “New Developments in Free Probability and Applications” at the CRM Montreal. We are further indebted to Gaetan Borot, Olivia Dumitrescu, Martin Moeller, Motohico Mulase and Roland Speicher for many helpful comments. We would like to thank Janko Boehm, Thomas Breuer and Dimitry Noshchenko for their help with some of the computer-aided calculations in this work. The authors M.A.H. and F.L. gratefully acknowledge partial support by DFG SFB-TRR 195 “Symbolic tool in mathematics and their applications”, project A 14 “Random matrices and Hurwitz numbers” (INST 248/238-1). Moreover, M.A.H. gratefully acknowledges support as part of the LOEWE research unit Uniformized Structures in Arithmetic and Geometry.

\section{Preliminaries}\label{sec:pre}

\subsection{Hurwitz numbers}\label{sec:Hurwitz}

In this section, we recall some of the basic notions of Hurwitz theory. We begin by defining classical Hurwitz numbers in the most general sense. We also review some of the specifications and variations on the definition of Hurwitz numbers relevant for this paper.

\subsubsection{Classical Hurwitz numbers}
Let $\lambda$ be a composition, i.e. a finite sequence of strictly positive integers. Denote by $|\lambda|$ the integer where $\lambda$ is a composition of and let $\ell(\lambda)$ be the number of parts of $\lambda$. Write $p(\lambda)$ for the ordered composition (partition) corresponding to $\lambda$. Given a permutation $\sigma\in S_d$, denote by $c(\sigma)$ the partition which corresponds to the cycle type of $\sigma$.
\begin{definition}
\label{def:hurwitznumber}
Let $g',g\ge0$ be non-negative integers, $d$ a positive integer and $\mu=(\mu^1,\dots,\mu^n)$ a tuple of compositions of $d$. In case 
\[2g'-2=d\cdot(2g-2)+\sum_{j=1}^n|\mu^j|-\ell(\mu^j),\]
 we call $(\sigma_1,\dots,\sigma_n,\alpha_1,\beta_1,\dots,\alpha_g,\beta_g)$ a \emph{factorization of type} $(g,g',d,\mu)$ if the following conditions are satisfied:
\begin{enumerate}
\item $\sigma_i,\alpha_i,\beta_i\in S_d$,
\item $\sigma_1\cdots\sigma_n=[\alpha_1,\beta_1]\cdots[\alpha_{g},\beta_{g}]$,
\item $c(\sigma_i)=p(\mu^i)$.
\end{enumerate}
If additionally we have
\begin{itemize}
\item[(4)] $\langle\sigma_1,\dots,\sigma_n,\alpha_1,\beta_1,\dots,\alpha_g,\beta_g\rangle$ acts transitively on the set $\{1,2,\ldots, d\}$
\end{itemize}
we call the factorization \emph{connected}.  Denote by $\mathcal{F}^\bullet(g,g',d,\mu)$ and $\mathcal{F}(g,g',d,\mu)$ the factorizations respectively connected factorizations of type $(g,g',d,\mu)$. The \emph{Hurwitz numbers} and \emph{connected Hurwitz numbers} are defined by
\[h^{\bullet,g}_{g'}(\mu^1,\dots,\mu^n) = \frac{1}{d!}|\mathcal{F}^\bullet(g,g',d,\mu)| \quad \text{respectively} \quad h^{g}_{g'}(\mu^1,\dots,\mu^n) = \frac{1}{d!}|\mathcal{F}(g,g',d,\mu)|.\]
\end{definition}

\begin{remark}
Historically, Hurwitz numbers were first defined as an enumerative problem counting ramified morphisms between Riemann surfaces: For a fixed compact Riemann surface $S$ of genus $g$, Hurwitz numbers count holomorphic maps $\pi:S'\to S$ (up to isomorphism), where $S'$ is a compact Riemann surface of genus $g'$, such that
\begin{itemize}
\item $\pi$ has ramification profile $\mu^1,\dots,\mu^n$ over $n$ arbitrary, but fixed points on $S$,
\item each map is weighted by $\frac{1}{|\mathrm{Aut}(\pi)|}$.
\end{itemize}
The connection to our definition, which is due to Hurwitz, is made by considering the monodromy representations for the holomorphic maps involved (see \cite{Hurwitzverzweigung,Hurwitzsymmetric}).
\end{remark}

Sometimes, we count Hurwitz numbers with labels in order to distinguish cycles of the same length. The following definition makes this more precise. We note that geometrically, this corresponds to labelling the pre-images of all branch points. 

\begin{definition}
A \textit{labeled factorization} is a factorization as defined by \cref{def:hurwitznumber} together with a labelling:
\begin{itemize}
\item[(5)]the disjoint cycles of $\sigma_i$ for all $i$ are labeled with labels $1,\ldots, \ell(\mu^i)$ such that a cycle of $\sigma_i$ with label $j$ has length $(\mu^i)_j$.
\end{itemize}
We use an arrow to indicate that we are considering labeled factorizations and Hurwitz numbers, just as we are using a dot to denote non-necessarily connected factorizations and Hurwitz numbers.
\end{definition}
\begin{remark}The labeled Hurwitz numbers equal the ordinary Hurwitz numbers up to a constant:
\[\vec{h}^{\bullet,g}_{g'}(\mu^1,\dots,\mu^n) = \Aut\mu\cdot {h}^{\bullet,g}_{g'}(\mu^1,\dots,\mu^n)\]
with $\Aut\mu=\prod_{m=1}^\infty \prod_{i=1}^n r_m(\mu^i)!$ and $r_m(\mu^i)$ denotes the number of parts equal to $m$ in the partition $\mu^i$. 
\end{remark}

\subsubsection{Special instances of Hurwitz numbers}
There are several special cases and variations of the above general definition of classical Hurwitz numbers. A general theme is to allow a few complicated partitions and force almost all partitions to be \textit{simple}, i.e. equal to the partition $(2,1,\dots,1)$. In the focus of this paper are the following two special cases of Hurwitz numbers.

\begin{definition}\label{def:simplehurwitz} Let $T:=(2,1,\ldots, 1)$ be the simple partition of size $d$. In the same setting as in \cref{def:hurwitznumber}, we consider two cases:
\begin{enumerate}
\item If $\mu^2,\dots,\mu^n=T$, we call the resulting Hurwitz number \textit{single base $g$ Hurwitz number} and denote it by $H^{\bullet,g}_{g'}(\mu^1_1,\ldots, \mu^1_{\ell(\mu^1)})\coloneqq h^{\bullet,g}_{g'}(\mu^1,T,\dots,T)$. 
Note that the number $r$ of simple partitions is then given by $r=2g'-2+\ell(\mu^1)-d(2g-1)$.
\item If $g=0$ and $\mu^3,\dots,\mu^n=T$, we call the resulting number a \textit{double Hurwitz number} and denote it by $h^{\bullet}_{g'}(\mu^1,\mu^2)\coloneqq  h^{\bullet,0}_{g'}(\mu^1,\mu^2,T,\dots,T)$. 
Note that the number $r$ of simple partitions is then given by $r=2g-2+\ell(\mu^1)+\ell(\mu^2)$.
\end{enumerate}
\end{definition}

\subsubsection{Monotone and strictly monotone Hurwitz numbers}
There are several variants of Hurwitz numbers of which two are relevant for this work: monotone and strictly monotone Hurwitz numbers. In the following, we define those numbers.

\begin{definition}\label{MonotoneBFactorization}
Let $\mu=(\mu^1,\dots,\mu^{k+2})$ be a tuple of compositions of $d$. We call a (connected/labeled) factorization of type $(0,g',d,\mu)$, where $\mu^3=\dots=\mu^{k+2}=(2,1\dots)$ a (connected/labeled) \emph{monotone factorization of type} $(g',\mu,\nu)$ if it satisfies the additional property 
\begin{itemize}
\item[(6)] The transpositions $\sigma_i=(s_i\,t_i)$ with $s_i<t_i$ satisfy $t_i\le t_{i+1}$ for $i=3,\dots,k+1$.
\end{itemize}
We then define the monotone Hurwitz number $h^{\bullet}_{\le,g'}(\mu^1,\mu^2)$ as the product of $\frac{1}{d!}$ and the number of monotone factorisations of type $(g',\mu,\nu)$ and similarly define connected/labeled monotone double Hurwitz numbers. 
We define \emph{strictly monotone double Hurwitz numbers}, also called \textit{Grothendieck dessins d'enfant Hurwitz numbers}, by changing the monotonicity condition to a strict one
\begin{itemize}
\item[(6')] The transpositions $\sigma_i=(s_i\,t_i)$ with $s_i<t_i$ satisfy $t_i< t_{i+1}$ for $i=3,\dots,k+1$.
\end{itemize}
We denote the strictly monotone double Hurwitz number by $h^{\bullet}_{<,g'}(\mu^1,\mu^2)$.
\end{definition}

We note that \cref{def:trip} generalizes (strictly) monotone Hurwitz numbers to target surfaces of higher genera.

%

\subsection{Quantum curves}
In \cite{LMSsimplequantum}, the connected single base $g$ Hurwitz numbers were studied with a view towards topological recursion. The main results are summarized in \cref{thm:lms} below. 

Denote by $\mathbb{Z}_+$ the set of positive integers and an element $\nu\in\mathbb{Z}_+^n$ by $\nu=(\nu_1,\dots,\nu_n)$. Define the discrete Laplace transform of the single base $g$ Hurwitz numbers by 
\begin{align}
F^g_{g'}(x_1,\dots,x_n)=&\sum_{\nu\in\mathbb{Z}^n_{+}}\vec{H}_{g'}^g(\nu_1,\dots,\nu_n)\prod_{i=1}^ne^{-w_i\nu_i} 
\quad \quad (x_i=e^{-w_i}),
\end{align}
which is usually referred to as the \textit{free energy}. We further define the so-called \textit{partition function} by
\begin{equation}
Z^g(x,\hbar)=\mathrm{exp}\left(\sum_{g'=1}^{\infty}\sum_{n=1}^{\infty}\frac{1}{n!}\hbar^{2g'-2+n}F_{g'}^g(x,\dots,x)\right).
\end{equation}

The following theorem was proved in \cite{LMSsimplequantum}.

\begin{theorem}[{\cite[Theorem 1.1]{LMSsimplequantum}}]
\label{thm:lms}
For $2g-2+\ell(\mu)>0$, the free energies $F^g_{g'}(x_1,\dots,x_n)$ satisfy a cut-and-join type partial differential equation.\par
The single base $g$ Hurwitz partition function satisfies a quantum curve-like infinite order differential equation
\begin{equation}
\hbar x\frac{d}{dx}\left[1-\hbar^{1-\chi}xe^{\hbar x\frac{d}{dx}}\left(\frac{d}{dx}x\right)^{1-\chi}\right]Z^g(x,\hbar)=0,
\end{equation}
where $\chi=2-2g$ (which is the Euler characteristic of the base curve of the Hurwitz problem). If we introduce
\begin{equation}
y=\hbar x\frac{d}{dx}
\end{equation}
and regard it as a commuting variable, then the total symbol of the above operator produces the following equation
\begin{equation}
x=y^{\chi-1}e^{-y},
\end{equation}
which is commonly refered to as a \textit{Lambert curve}.
\end{theorem}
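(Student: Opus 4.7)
The plan is to leverage the representation-theoretic expression of Hurwitz numbers via Burnside's formula, which realises $\vec H_{g'}^g(\nu_1,\dots,\nu_n)$ as a character sum over irreducible representations $\lambda$ of $S_d$, weighted by $(\dim\lambda/d!)^{2-2g}$ from the $g$ commutators, by $\mathfrak{f}_T(\lambda)^r$ where $\mathfrak{f}_T(\lambda)$ is the central character of the class of transpositions and $r$ is the number of simple ramifications, and by the character values $\chi_\lambda(\nu_i)$ on the arbitrary ramification profile. This single formula packages both the cut-and-join PDE (through the $r$-dependence) and the exponent $\chi=2-2g$ visible in the quantum curve.

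First I would establish the cut-and-join PDE for $F^g_{g'}(x_1,\dots,x_n)$ by the standard combinatorial bijection: removing the last simple transposition from a factorization either cuts a cycle of $\sigma_1\cdots\sigma_n$ in two or joins two cycles, and tracking the effect on the $x_i$-weighting reproduces the classical cut-and-join differential operator in the $x_i$-variables. Equivalently, this is the statement that the central element $\mathcal{C}_T=\sum_{i<j}(i\,j)\in \mathbb Z[S_d]$ acts diagonally on the Schur basis with eigenvalue $\mathfrak{f}_T(\lambda)$. In either derivation the genus of the base only enters through the prefactor $(\dim\lambda/d!)^{2-2g}$, which commutes with $\mathcal{C}_T$, so the shape of the PDE is independent of $g$.

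Next I would pass to the principal specialization $x_i=x$ and re-exponentiate to form $Z^g(x,\hbar)$. The key observation is that on monomials $x^\nu$ the operator $e^{\hbar x\frac{d}{dx}}$ acts by the shift $x\mapsto e^\hbar x$, which reproduces the combinatorial effect of inserting an additional marked point into the free energy. Summing the cut-and-join PDEs against the weights $\hbar^{2g'-2+n}/n!$ dictated by the Euler-characteristic grading of the partition function then collapses the whole tower of PDEs into the single infinite-order equation in the statement. The main obstacle is the bookkeeping in this step: one must carefully match the $\hbar$-gradings and combinatorial factors on both sides, and trace how the $(\dim\lambda/d!)^{2-2g}$ prefactor translates into the explicit $\hbar^{1-\chi}(\frac{d}{dx}x)^{1-\chi}$ factor in the operator.

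Finally, to read off the spectral curve I would treat $y=\hbar x\frac{d}{dx}$ as a commuting variable and compute the total symbol of the bracketed operator. The action of $e^{\hbar x\frac{d}{dx}}$ becomes multiplication by $e^y$ in the classical limit, while $(\frac{d}{dx}x)^{1-\chi}$ contributes $\hbar^{\chi-1}y^{1-\chi}$ (since $\hbar\frac{d}{dx}x=y+\hbar\to y$ as $\hbar\to 0$), so the $\hbar$-powers cancel and the bracket becomes $1-xy^{1-\chi}e^y$. Setting this symbol to zero yields the Lambert curve $x=y^{\chi-1}e^{-y}$, confirming that the semiclassical limit of the quantum curve is the expected spectral curve for base genus $g$.
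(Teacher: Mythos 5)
First, a point of order: the paper does not prove \cref{thm:lms} at all --- it is quoted from \cite{LMSsimplequantum} --- so there is no in-paper proof to compare against. The closest internal comparison is with the monotone and strictly monotone analogues that the paper \emph{does} prove (\cref{PartitionFunktionStirling} together with \cref{thm:mono} and \cref{thm:strict}), and these follow a strategy quite different from yours: one first computes the principally specialized partition function in closed form as a single sum over the degree $d$, and then reads the quantum curve off as a one-step recursion among the coefficients of $x^d$. In the simple case this works because, after setting all $x_i=x$ and applying the exponential formula, the coefficient of $x^d$ counts all tuples $(\sigma,\tau_1,\dots,\tau_b,\alpha_1,\beta_1,\dots,\alpha_g,\beta_g)$ with $\sigma\tau_1\cdots\tau_b=\prod_i[\alpha_i,\beta_i]$; since $\sigma$ is determined by the remaining data, no character theory is needed and one obtains (with the $b$ simple branch points weighted by $\hbar^{b}/b!$ as in \cite{LMSsimplequantum}) $Z^g(x,\hbar)=1+\sum_{d\ge1}(d!)^{1-\chi}\hbar^{d(1-\chi)}e^{\binom{d}{2}\hbar}x^d$. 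The relation $a_d=\hbar^{1-\chi}d^{1-\chi}e^{(d-1)\hbar}a_{d-1}$ between consecutive coefficients is then literally the bracketed operator identity, because $\left(\frac{d}{dx}x\right)^{1-\chi}$ and $e^{\hbar x\frac{d}{dx}}$ act on $x^{d-1}$ by $d^{1-\chi}$ and $e^{(d-1)\hbar}$, and the outer $\hbar x\frac{d}{dx}$ kills the leftover constant term. The cut-and-join PDE is a separate, logically independent part of the statement.

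The genuine gap in your proposal is the second step. You assert that summing the cut-and-join PDEs against the weights $\hbar^{2g'-2+n}/n!$ ``collapses the whole tower of PDEs into the single infinite-order equation'', but no mechanism is given, and the naive version fails: the cut-and-join operator, written in power-sum coordinates, contains second-order terms $p_{i+j}\,\partial^2/\partial p_i\partial p_j$, and second derivatives do not factor through the principal specialization $p_k\mapsto x^k$ --- knowing the one-variable restriction $F(x,\dots,x)$ does not determine the specialized Hessian terms. Moreover the cut-and-join equation is an evolution equation in the parameter counting simple branch points, which is coupled to the power of $\hbar$ through Riemann--Hurwitz; extracting from it an equation in $x$ at fixed $\hbar$ requires eliminating that parameter, which is precisely the bookkeeping you flag as ``the main obstacle'' but do not carry out. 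Your first step (Burnside/central characters, with the base genus entering only through $(\dim\lambda/d!)^{2-2g}$) and your third step (the semiclassical symbol computation $\hbar\frac{d}{dx}x=y+\hbar\to y$, giving $1-xy^{1-\chi}e^{y}=0$ and hence the Lambert curve) are both correct; only the passage from cut-and-join to the quantum curve is missing, and the direct coefficient computation sketched above is the simplest way to fill it.
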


\begin{remark}
A cut-and-join type partial differential equation is a certain kind of partial differential equation, which reflects the combinatorics of multiplying elements in the symmetric group. In the case of simple Hurwitz numbers, these combinatorics entail the fact that left-multiplication by a transposition either joins two cycles to one or cuts a single cycle into two.
\end{remark}

\subsection{Stirling numbers}
We now define Stirling numbers of the first and second kind.

\begin{definition}
For $n,k\in\mathbb{N}$, we define \textit{Stirling numbers of the first kind} by the recurrence relation
\begin{equation}
\fstirling{n+1}{k}=n\fstirling{n}{k}+\fstirling{n}{k-1}\,\,\mathrm{for}\,\,k>0;\quad \fstirling{0}{0}=1\,\,\mathrm{and}\,\,\fstirling{n}{0}=\fstirling{0}{n}=0\,\,\mathrm{for}\,\,n>0
\end{equation}
and \textit{Stirling numbers of the second kind} by the recurrence relation
\begin{equation}
\stirling{n+1}{k}=k\stirling{n}{k}+\stirling{n}{k-1}\,\,\mathrm{for}\,\,k>0;\quad\stirling{0}{0}=1\,\,\mathrm{and}\,\,\stirling{n}{0}=\stirling{0}{n}=0\,\,\mathrm{for}\,\,n>0.
\end{equation}
\end{definition}

\begin{remark}
Recall the generating functions of Stirling numbers
\begin{equation}\label{eq:stirling}
\sum_{k=0}^{n}\fstirling{n}{k}\frac{z^{n-k}}{n!}
=\prod_{r=1}^{n-1} (1-rz)\quad\mathrm{and}\quad\sum_{n=k}^{\infty}\stirling{n}{k}x^{n-k}=\prod_{r=1}^k\frac{1}{1-rx}.	
\end{equation}
\end{remark}

\subsection{Shifted symmetric functions and quasimodular forms}
\subsubsection{Central characters}\label{sec:centralcharacter}
Let $\mathcal{Z}_n$ be the center of the group algebra $\C[S_n]$ of the symmetric group. Given a partition $\lambda$, denote by $|\lambda|$ the integer where $\lambda$ is a partition of and by $C_\lambda$ the sum of all elements in $S_{|\lambda|}$ of cycle type $\lambda$. Let $(\rho_\lambda,V_\lambda)$ be the irreducible representation of $S_{|\lambda|}$ corresponding to $\lambda$. Note that a basis of $\mathcal{Z}_n$ is given by $C_\lambda$ for all partitions $\lambda$ of $n$. Every element $z\in \mathcal{Z}_n$ defines by Schur's lemma a constant, called the \textit{central character} $\omega^\lambda(z)$, by which $\rho_\lambda(z)$ acts on $V_\lambda$. For example, the central character of the conjugacy class sums $C_\nu$ is given by

\begin{equation}\label{def:fnu}
f_\nu(\lambda):=\omega^\lambda(C_\nu)  = |C_\nu|\frac{\chi^\lambda(\nu)}{\dim\lambda},
\end{equation}
where (by abuse of notation) $|C_\nu|$ denotes the size of the conjugacy class of elements of cycle type $\nu$ in $S_{|\nu|}$, $\chi^\lambda$ denotes the character of $\rho_\lambda$ and $\dim\lambda$ the dimension of $\rho_\lambda$. We extend \cref{def:fnu} to the case when $|\nu|\neq |\lambda|$ by
\begin{equation}
f_\nu(\lambda):=\binom{|\lambda|}{|\nu|}|C_\nu|\frac{\chi^\lambda(\nu)}{\dim\lambda}.
\end{equation}
Observe that if $\nu$ is a partition without parts equal to $1$ the quantity $\binom{|\lambda|}{|\nu|}|C_\nu|$ equals the size of the conjugacy class of cycle type $\nu$ in $S_{|\lambda|}$ instead of in $S_{|\nu|}$. 
For a tuple of partitions $\mu=(\mu^1,\dots,\mu^n)$ let $f_\mu = \prod_{i=1}^n f_{\mu^i}$. Hurwitz numbers can be expressed in terms of central characters, for example classical Hurwitz numbers of torus coverings satisfy
\begin{equation}\label{eq:hurwitzss} h^{\bullet,1}_{g'}(\mu)  = \sum_{|\lambda|=d} f_\mu(\lambda). 
\end{equation}
with $d$ --- as in the definition of Hurwitz numbers --- implicitly given by the size of the partitions where $\mu$ consists of. 
In \cref{prop:hurwitzcentralcharacter} we show that triply mixed Hurwitz numbers are a polynomial in central characters. 

Let $\Xi_d=(J_1,J_2,\ldots,J_d,0,0,\ldots)$ be the sequence of Jucys-Murphy elements given by $J_k = \sum_{i=1}^{k-1} (i\,k)$. Although the Jucys-Murphy elements do not belong to $\mathcal{Z}_n$, symmetric polynomials in $\Xi_d$ are elements of $\mathcal{Z}_n$. More precisely, every element of $\mathcal{Z}_n$ can be written as a symmetric polynomial in $\Xi_d$ \cite{jucys1974symmetric,murphy1981new}. Remarkably,  the central character of a symmetric polynomial $f$ evaluated at $\Xi_d$ simply equals
 $$\omega^\lambda(f(\Xi_d))=f(\mathrm{cont}_\lambda),$$
 where $\mathrm{cont}_\lambda$ denotes the sequence of all contents of $\lambda$. 
 
The commutator sum $\mathfrak{K} = \sum_{\alpha,\beta\in S_d} [\beta,\alpha]$ also is an element of $\mathcal{Z}_n$. Its central character equals
\begin{align}
\omega_\lambda(\mathfrak{K}) 
&=\left(\frac{d!}{\dim\lambda}\right)^2. 
\end{align}
 
 \subsubsection{Central characters as shifted symmetric polynomials}\label{sec:shiftedsymmetric}
Central characters of $z\in \mathcal{Z}_n$ are examples of \textit{shifted symmetric polynomials}, introduced by Okounkov and Olshanski \cite{OO97}. A rational polynomial in $m$ variables $x_1,\ldots, x_m$ is called \emph{shifted symmetric} if it is invariant under the action of all $\sigma\in\mathfrak{S}_m$ given by $x_i\mapsto x_{\sigma(i)}+i-\sigma(i)$ (or more symmetrically $x_i-i\mapsto x_{\sigma(i)}-\sigma(i)$). Denote by $\Lambda^*(m)$ the space of shifted symmetric polynomials in $m$ variables, which is filtered by the degree of the polynomials. We have forgetful maps $\Lambda^*(m)\to \Lambda^*({m-1})$ given by $x_m\mapsto 0$, so that we can define the space of shifted symmetric polynomials $\Lambda^*$ as $\displaystyle\varprojlim_m \Lambda^*(m)$ in the category of filtered algebras. As $\omega^\lambda(z)$ is a shifted symmetric polynomial, it can be expressed as a symmetric polynomial in $\lambda_1-1,\lambda_2-2,\ldots$. For example,
\[f_{()}(\lambda)=1,\quad  f_{(1)} = |\lambda| = \sum_{i=1}^\infty ((\lambda_i-i)+i), \quad f_{(2)}(\lambda) = \tfrac{1}{2}\sum_{i=1}^\infty (\lambda_i-i+\tfrac{1}{2})^2-(-i+\tfrac{1}{2})^2.\]
More precisely, the algebra of shifted symmetric functions $\Lambda^*$ has a basis $f_\nu$ where $\nu$ ranges over all partitions. 

\subsubsection{The Bloch--Okounov theorem}
A distinguished generating set for $\Lambda^*$ is given by the renormalized shifted symmetric power sums
\[ Q_0(\lambda)=1, \quad \quad Q_k(\lambda) = c_k+\frac{1}{(k-1)!}\sum_{i=1}^\infty (\lambda_i-i+\tfrac{1}{2})^{k-1}-(-i+\tfrac{1}{2})^{k-1} \quad \quad (k\geq 1) \]
with $c_k$ defined by
\[\label{eq:ci} \varsigma(z) := \frac{1}{2\sinh(z/2)} =: \sum_{k=1}^\infty c_k z^{k-1}.\]
Define a weight grading on $\Lambda^*$ by assigning to $Q_k$ weight $k$. 

This weight grading corresponds to the weight of quasimodular forms under the Bloch--Okounov theorem, as follows. The graded algebra of quasimodular forms is given by $\widetilde{M}=\Q[P,Q,R]$, where $P=-24G_2,Q=240G_4,R=-504G_6$ are Ramanujan's notation for the Eisenstein series
$$ G_k(\tau) = -\frac{B_k}{2k}+\sum_{r=1}^\infty\sum_{m=1}^\infty m^{k-1} q^{mr}, \quad \quad (B_k = k\text{th Bernoulli number and } q=e^{2\pi i \tau})$$
of weight $k$. 
Given a function $f$ on partitions, introduce the $q$-bracket of $f$, given by
$$\langle f \rangle_q = \frac{\sum_{\lambda\in \mathscr{P}} f(\lambda) q^{|\lambda|}}{\sum_{\lambda\in \mathscr{P}}q^{|\lambda|}}\in \C[[q]].$$
The denominator $\sum_{\lambda\in \mathscr{P}}q^{|\lambda|}$ equals $q^{1/24}\eta(\tau)^{-1}$ with $\eta(\tau)$ the Dedekind eta function. Then, by the celebrated Bloch--Okounov theorem the $q$-bracket $\langle f\rangle_q$ of a shifted symmetric polynomial $f$ is the $q$-expansion of a quasimodular form \cite[Theorem 0.5]{BO00}. Moreover, if $f$ has weight $k$ as defined above, then $\langle f\rangle_q$ is quasimodular of the same weight $k$. 

\subsubsection{Weights of central characters}\label{sec:weights}
We give the (mixed) weight of the central characters defined in \cref{sec:centralcharacter} in terms of the weight grading on $\Lambda^*$. Define completion coefficients $q_{k,\nu}$ with $k\geq 2$ and $\nu$ a partition by 
\[Q_k = \sum_{\nu} q_{k,\nu} f_\nu.\]
The Gromov-Witten--Hurwitz correspondence provides the following formula for these coefficients: 
\begin{proposition}[{\cite[Proposition 3.2]{OP06}}]
The completion coefficients $q_{k,\nu}$ 
satisfy
\[\sum_{k=1}^\infty q_{k+1,\nu}z^{k} = \frac{1}{|\nu|!}(e^{z/2}-e^{-z/2})^{|\nu|-1}\prod_{i=1}^{\ell(\nu)} (e^{\nu_iz/2}-e^{-\nu_i z/2}).\]
\end{proposition}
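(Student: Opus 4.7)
The identity to prove is a generating function in $z$ expressing how the renormalized power sums $Q_{k+1}$ decompose in the basis $\{f_\nu\}$ of central characters. My plan is to translate both sides into the semi-infinite wedge / Okounkov $\mathcal{E}$-operator formalism, where the sinh-factors on the right-hand side appear naturally from the $\varsigma$-function in the commutation relations.

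First, I would use the standard fact that on the semi-infinite wedge with distinguished basis $\{v_\lambda\}$, the diagonal operator $\mathcal{E}_0(z)$ acts on $v_\lambda$ with eigenvalue
$$\varsigma(z)+\sum_{i\geq 1}\bigl(e^{(\lambda_i-i+1/2)z}-e^{(-i+1/2)z}\bigr)=\sum_{k\geq 1}Q_k(\lambda)\,z^{k-1},$$
so that the left-hand side of the proposition records precisely how the eigenvalue of $\mathcal{E}_0(z)$ decomposes against the operators representing the $f_\nu$'s. Each central character $f_\nu$ corresponds, up to an explicit constant, to a normally-ordered product built from $\alpha_{-\nu_i}\alpha_{\nu_i}$ with $\alpha_n=\mathcal{E}_n(0)$; this identification realizes $f_\nu$ as a vacuum matrix element, and extracting the coefficient of $f_\nu$ in $Q_{k+1}$ reduces to an operator computation.

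Next, I would evaluate this matrix element by iterating the commutation relation
$$[\mathcal{E}_a(x),\mathcal{E}_b(y)]=\varsigma(ay-bx)\,\mathcal{E}_{a+b}(x+y).$$
Each commutator of $\mathcal{E}_0(z)$ past an $\alpha_{\nu_i}$ contributes a factor $\varsigma(\nu_i z)^{-1}=e^{\nu_iz/2}-e^{-\nu_iz/2}$, producing after all $\ell(\nu)$ moves the product $\prod_i(e^{\nu_iz/2}-e^{-\nu_iz/2})$; the $|\nu|-1$ further nested commutators needed to connect the resulting expression to the vacuum account for the factor $(e^{z/2}-e^{-z/2})^{|\nu|-1}$; and symmetrization over orderings of the parts of $\nu$ accounts for the prefactor $1/|\nu|!$.

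The main obstacle I anticipate is not the core algebraic manipulation but the careful bookkeeping of normalizations: reconciling the extended definition of $f_\nu$, with its combinatorial prefactor $\binom{|\lambda|}{|\nu|}|C_\nu|$, against the natural normalizations of the operator formalism, and tracking signs and re-orderings through the chain of commutators. Once these are correctly aligned the identity emerges in closed form. As an alternative, one could deduce the formula from the Gromov--Witten/Hurwitz correspondence by identifying $Q_{k+1}$ (up to normalization) with the $k$-th completed cycle insertion, whose stationary generating function takes exactly the shape of the RHS; this is the path followed in \cite{OP06}.
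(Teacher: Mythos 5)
First, a point of reference: the paper does not prove this proposition at all --- it is quoted verbatim from \cite[Proposition 3.2]{OP06} --- so there is no in-paper argument to compare against, and your sketch has to stand on its own. Your overall strategy (realize the $f_\nu$-coefficient of $Q_{k+1}$ as a vacuum expectation on the infinite wedge and evaluate it by iterated commutators of $\mathcal{E}$-operators) is viable and is essentially the computation underlying the Okounkov--Pandharipande proof. The correct starting point is that, up to corrections discussed below, $\sum_k q_{k+1,\nu}z^k$ equals $\frac{1}{|\nu|!}\langle \alpha_1^{|\nu|}\,\mathcal{E}_0(z)\,\alpha_{-\nu_1}\cdots\alpha_{-\nu_{\ell(\nu)}}\rangle$; the product $\prod_i\varsigma(\nu_iz)$ then arises from successively merging the $\alpha_{-\nu_i}$ into a single operator $\mathcal{E}_{-|\nu|}(z)$, and $\varsigma(z)^{|\nu|-1}$ is the matrix element $\langle \alpha_1^{|\nu|}\mathcal{E}_{-|\nu|}(z)v_\emptyset\rangle$.

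As written, however, the sketch goes wrong precisely at the points you defer to ``bookkeeping'', and these are not cosmetic. (i) The function $f_\nu$ does \emph{not} correspond to a normally ordered product $\prod_i\alpha_{-\nu_i}\alpha_{\nu_i}$: that operator is neither diagonal on the $v_\lambda$ nor does its diagonal part equal $f_\nu(\lambda)$. What is true is $\prod_i\alpha_{-\nu_i}v_\emptyset=\sum_{\lambda\vdash|\nu|}\chi^\lambda(\nu)\,v_\lambda$, and the coefficient extraction then goes through character orthogonality. This is where the real difficulty sits: restricted to partitions of a fixed size, the functions $f_\mu$ for $\mu$ obtained from $\nu$ by deleting parts equal to $1$ are not linearly independent of $f_\nu$, so the vacuum expectation above computes $q_{k+1,\nu}$ only after subtracting the contributions of these sub-partitions (this is the stabilization argument in \cite{OP06}). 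Concretely, these are the cross terms of your commutator expansion in which $\mathcal{E}_0(z)$ merges only some of the $\alpha_{-\nu_i}$; your sketch does not account for them, and showing that they drop out is the heart of the proof. (ii) The prefactor $1/|\nu|!$ is not a symmetrization over orderings of the parts of $\nu$ --- that would give $1/\ell(\nu)!$ or $1/|\Aut\nu|$ --- it is the $1/n!$ of character orthogonality at $n=|\nu|$, i.e.\ $1/(\mathfrak{z}(\nu)\,|C_\nu|)$. (iii) You assert that each commutator contributes $\varsigma(\nu_iz)^{-1}=e^{\nu_iz/2}-e^{-\nu_iz/2}$; with the commutation relation $[\mathcal{E}_a(x),\mathcal{E}_b(y)]=\varsigma(ay-bx)\mathcal{E}_{a+b}(x+y)$ the symbol $\varsigma$ must mean $2\sinh(\cdot/2)$, so the factor is $\varsigma(\nu_iz)$ itself, not its inverse (the paper uses $\varsigma$ in both senses in different places, but you cannot mix them inside one computation). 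None of this invalidates the strategy, but the argument does not close as written; your fallback via the Gromov--Witten/Hurwitz correspondence is legitimate but amounts to re-citing \cite{OP06}, which is exactly what the paper does.
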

In particular, $q_{k,\nu}=0$ if $|\nu|+\ell(\nu)>k$. This implies that $f_{\nu}$ is of (mixed) degree at most $|\nu|+\ell(\nu)$, which was already proved in \cite[Theorem 5]{KO94}.

For the weights of symmetric polynomials evaluated at the contents of a partition we have the following result. Let $h_n$ and $e_n$ be the complete homogeneous symmetric polynomial respectively elementary symmetric polynomial of degree $n$. 
\begin{proposition}\label{prop:2d} Let $d\geq 1 $. The top-weight part of $h_d(\mathrm{cont}_\lambda)$ and $e_d(\mathrm{cont}_\lambda)$ is given by $\frac{1}{2^{d-1}}Q_3(\lambda)^{d}$.
\end{proposition}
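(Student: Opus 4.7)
The plan is to use Newton's identities to reduce the problem to a weight analysis of each individual power-sum content polynomial $p_k(\mathrm{cont}_\lambda) = \sum_{(i,j)\in \lambda}(j-i)^k$, and then to identify the unique power-sum partition whose product attains the top weight $3d$.

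First, I would expand
$$h_d = \sum_{\mu\vdash d} \frac{p_\mu}{z_\mu}, \qquad e_d = \sum_{\mu\vdash d}\frac{(-1)^{d-\ell(\mu)}p_\mu}{z_\mu},$$
where $z_\mu$ is the usual centraliser factor, so that $h_d(\mathrm{cont}_\lambda)$ and $e_d(\mathrm{cont}_\lambda)$ become rational combinations of products $p_\mu(\mathrm{cont}_\lambda) = \prod_i p_{\mu_i}(\mathrm{cont}_\lambda)$. This reduces the statement to understanding the weight of each $p_k(\mathrm{cont}_\lambda)$ and of products thereof.

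The key technical step is the top-weight part of a single $p_k(\mathrm{cont}_\lambda)$. Via Faulhaber's formula,
$$p_k(\mathrm{cont}_\lambda) = \frac{1}{k+1}\sum_i\bigl[B_{k+1}(\lambda_i - i + 1) - B_{k+1}(1-i)\bigr].$$
The reflection identity $B_n(1-x)=(-1)^n B_n(x)$ implies that the shifted Bernoulli polynomial $P_{k+1}(x) := B_{k+1}(x+\tfrac{1}{2})$ has parity $(-1)^{k+1}$ with leading monomial $x^{k+1}$. Substituting $x = \lambda_i-i+\tfrac{1}{2}$ and $x = -i+\tfrac{1}{2}$ and invoking the shifted-power-sum identity $\sum_i\bigl[(\lambda_i-i+\tfrac{1}{2})^m - (-i+\tfrac{1}{2})^m\bigr] = m!(Q_{m+1}-c_{m+1})$ expresses $p_k(\mathrm{cont}_\lambda)$ explicitly as a polynomial in $Q_2,\dots,Q_{k+2}$ plus constants; in particular, the top-weight part is an explicit scalar multiple of $Q_{k+2}$ of weight $k+2$.

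Consequently, $p_\mu(\mathrm{cont}_\lambda)$ has weight $\sum_i(\mu_i+2) = d + 2\ell(\mu)$, which is maximised uniquely by $\mu = (1^d)$, giving weight $3d$. Thus only this single term in the Newton expansion contributes at the top weight to $h_d(\mathrm{cont}_\lambda)$ and to $e_d(\mathrm{cont}_\lambda)$; since $(-1)^{d-d}=+1$, the two top-weight parts coincide, and from $p_1(\mathrm{cont}_\lambda) = f_{(2)}(\lambda) = Q_3 - c_3$ they are a common scalar multiple of $Q_3^d$, matching the claimed shape. The precise numerical constant $\tfrac{1}{2^{d-1}}$ must emerge from carefully tracking the Bernoulli constants in the middle step together with the centraliser factor $z_{(1^d)}$. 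I expect the main obstacle to be this Bernoulli polynomial bookkeeping, since the outer structural steps (Newton expansion and weight counting) are essentially routine.
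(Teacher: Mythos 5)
Your structural argument is essentially the paper's own proof transported to a different basis: the paper expands $h_d$ and $e_d$ in the falling-factorial power sums $\underline{p}_k$ and invokes the Kerov--Olshanski degree bound, while you expand in ordinary power sums and use Faulhaber. All of the qualitative steps are correct: $p_k(\mathrm{cont}_\lambda)$ has top-weight part $k!\,Q_{k+2}$ of weight $k+2$, hence $p_\mu(\mathrm{cont}_\lambda)$ has weight $|\mu|+2\ell(\mu)$, the unique maximiser among $\mu\vdash d$ is $(1^d)$ with weight $3d$, and the sign $(-1)^{d-\ell(\mu)}$ equals $+1$ there, so $h_d$ and $e_d$ share a top-weight part which is a scalar multiple of $Q_3^d$.

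The gap sits exactly where you defer. For the unique contributing term $\mu=(1^d)$ there is no Bernoulli bookkeeping left to do: $p_1(\mathrm{cont}_\lambda)=f_{(2)}(\lambda)=Q_3(\lambda)-c_3$ on the nose, so its top-weight part is $Q_3$ with coefficient $1$, and your expansion delivers the top-weight part $\tfrac{1}{z_{(1^d)}}Q_3^d=\tfrac{1}{d!}Q_3^d$. This agrees with $\tfrac{1}{2^{d-1}}Q_3^d$ only for $d\le 2$. For $d=3$ one has $h_3=\tfrac16 p_1^3+\tfrac12 p_1p_2+\tfrac13 p_3$, whose three terms have weights $9$, $7$ and $5$ respectively, so the weight-$9$ part is $\tfrac16 Q_3^3$, not $\tfrac14 Q_3^3$. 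Thus your proof, carried to completion along the lines you describe, does not produce the stated constant: either you must exhibit a factor of $d!/2^{d-1}$ that the expansion misses (I do not see where one could enter), or the constant in the statement should read $\tfrac{1}{d!}$. The discrepancy is not an artefact of your choice of basis: since $p_k=\sum_j\stirling{k}{j}\underline{p}_j$ and only $\mu=(1^d)$ can contribute a degree-$d$ monomial in $\underline{p}_1$ alone, the coefficient of $\underline{p}_1^d$ in $h_d$ (which the paper's proof asserts, without computation, to be $\tfrac{1}{2^{d-1}}$) is likewise $\tfrac{1}{z_{(1^d)}}=\tfrac{1}{d!}$. In any case, a proof ending with ``the constant must emerge'' has not proved the proposition; the constant is the only nontrivial content beyond the routine weight count, and here it does not emerge as claimed, so this step must be settled explicitly before the argument can stand.
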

\begin{proof}
Let $(x)_k=x(x-1)\cdots(x-k+1)$. Then, for the symmetric polynomial $\underline{p}_k(x_1,\ldots,x_n):=\sum_{i=1}^n(x_i)_k$ it is known that \cite[Theorem 4]{KO94}
\begin{equation}\label{eq:ko}k\underline{p}_{k-1}(\mathrm{cont}_\lambda) = \sum_{i=1}^\infty (\lambda_i-i+1)_k-(-i+1)_k \quad \quad (k\geq 1)\end{equation}
where the right-hand side is a shifted symmetric polynomial of mixed degree $\leq k+1$. 

 Given a symmetric polynomial $g$ of degree $d$ and with constant term equal to zero, one can write $g$ as a polynomial in the $\underline{p}_k$ for $k\geq 1$. Assign to $\underline{p}_k$ weight $k+2$ in accordance with \cref{eq:ko}, i.e. $\underline{p}_k(\mathrm{cont}_\lambda)$ is a shifted symmetric polynomial of mixed weight $k+2$. Observe that the monomial $\underline{p}_1^d$ is the unique monomial of degree $d$ and weight at least (precisely) $3d$. Hence, the central character $\omega^\lambda(f(\Xi_d))$ of $g$ is a shifted symmetric function of weight at most $3d$ with top-degree part up to a multiplicative constant equal to $Q_3(\lambda)^d$. Specializing $g$ to $h_d$ and $e_d$ the result follows by observing that in this case the coefficient of $\underline{p}_1^d$ equals $\frac{1}{2^{d-1}}$. 
\end{proof}

\subsection{Gromov-Witten invariants with target $\mathbb{P}^1$}
In this section, we introduce the basic notions of Gromov-Witten theory needed for this work. For a more concise introduction in the context of tropical geometry, see e.g. \cite{CJMRgraphical}. For a more general introduction to the topic, we recommend \cite{Vakilgromov}.\par
We denote by $\overline{M}_{g,n}(\mathbb{P}^1,d)$ the moduli space of stable maps with $n$ marked points, which a Deligne-Mumford stack of virtual dimension $2g-2+2d+n$. It consists of tuples $(X,x_1,\dots,x_n,f)$, such that $X$ is a connected, projective curve of genus $g$ with at worst nodal singularities, $x_1,\dots,x_n$ are non-singular points on $X$ and $f:X\to\mathbb{P}^1$ is a function with $f_{\ast}([X])=d[\mathbb{P}^1]$. Moreover, $f$ may only have a finite automorphism group (respecting markings and singularities). In order to define enumerative invariants, we introduce
\begin{itemize}
\item The $i-$th evaluation morphism is the map $ev_i:\overline{M}_{g,n}(\mathbb{P}^1,d)\to\mathbb{P}^1$ by mapping the tuple $(X,x_1,\dots,x_n,f)$ to $x_i$.
\item The $i-$th cotangent line bundle $\mathbb{L}_i\to\overline{M}_{g,n}(\mathbb{P}^1,d)$ is obtained by identifying the fiber of each point with the cotangent space $\mathbb{T}^*_{x_i}(X)$. The first chern class of $i-$th cotangent line bundle is called a psi class, which we denote by $\psi_i=c_1(\mathbb{L}_i)$.
\end{itemize}

This yields the following definition.

\begin{definition}
Fix $g,n,d$ and let $k_1,\dots,k_n$ be non-negative integers, such that $k_1+\dots+k_n=2g+2d-2$. Then, the stationary Gromov-Witten invariant is defined by
\begin{equation}
\langle\tau_{k_1}(pt)\cdots\tau_{k_n}(pt)\rangle_{g,n}^{\mathbb{P}^1}=\int_{[\overline{M}_{g,n}(\mathbb{P}^1)]^{vir}}\prod ev_i^*(pt)\psi_i^{k_i},
\end{equation}
where $pt$ denotes a point on $\mathbb{P}^1$.
\end{definition}

\begin{remark}
Analogously, we can define Gromov-Witten invariants for more general target curves $Y$
\begin{equation}
\langle\tau_{k_1}(pt)\cdots\tau_{k_n}(pt)\rangle_{g,n}^{Y}.
\end{equation}
The following identity, which should be compared with \cref{eq:hurwitzss} in the previous discussion on shifted symmetric functions, was proved in \cite{OP06} for elliptic curves $E$:
\begin{equation}
\langle\tau_{k_1}(pt)\cdots\tau_{k_n}(pt)\rangle_{g,n}^{E,d}=  \sum_{|\lambda|=d} \prod_{i=1}^n Q_{k_i+2}(\lambda).
\end{equation}
\end{remark}

Similarly, we consider the moduli space of relative stable maps $\overline{M}_{g,n}(\mathbb{P}^1,\nu,\mu,d)$ relative to two partitions $\mu,\nu$ of $d$ and define the relative Gromov-Witten invariants by

\begin{equation}
\langle\nu\mid\tau_{k_1}(pt)\cdots\tau_{k_n}(pt)\mid\mu\rangle_{g,n}^{\mathbb{P}^1}=\int_{[\overline{M}_{g,n}(\mathbb{P}^1,\nu,\mu,d)]^{vir}}\prod ev_i^*(pt)\psi_i^{k_i}.
\end{equation}

We note that in the following, we add subscripts "$\circ$" and "$\bullet$", which correspond to \textit{connected} or \textit{not necessarily connected} (for simplicity also called \textit{disconnected} Gromov-Witten invariants, which in turn correspond to considering connected or disconnected stable maps.

\subsection{Tropical covers and monotone/Grothendieck dessins d'enfants Hurwitz numbers}
A detailed introduction to tropical covers can be found in \cite{ABBRharmonic}. We note that all graphs considered may contain half-edges.

\begin{definition}
\label{def:abstrop}
An \textit{abstract tropical curve} is a connected
metric graph $\Gamma$, together with a function associating a genus $g(v)$ to each vertex $v$. Let $V(\Gamma)$ be the set of its vertices. Let $E(\Gamma)$ be the set of its internal edges, which we require to be bounded and let $E'(\Gamma)$ its set of all edges, respectively. The set of half-edges, which we call ends is therefore $E'(\Gamma) \setminus E(\Gamma)$, and all ends are considered to have infinite length. The genus of an abstract tropical curve $\Gamma$ is
 $ g(\Gamma)\coloneqq h^1(\Gamma) + \sum_{v \in V(\Gamma)} g(v)$,
where $h^1(\Gamma)$ is the first Betti number of the underlying graph.
An \textit{isomorphism} of a tropical curve is an automorphism of the underlying graph that respects edges' lengths and vertices' genera.
The \textit{combinatorial type} of a tropical curve is the graph obtained by disregarding its metric structure.
\end{definition}

\begin{remark}
An important tropical curve for this work is the so-called \textit{tropical projective line}, defined as $\mathbb{R}$ with finitely many decorated points. The points are the vertices, the intervals between the vertices (and from the extremal vertices to $\pm\infty$) are the edges and the length of the edges are the lengths of the intervals. We denote the tropical projective line by $\mathbb{P}^1_{trop}$ for any choice of points.
\end{remark}

We now define tropical covers. For an illustration, see e.g \cite[Figure 1]{MR3821170}.

\begin{definition}
\label{def:tropmorph}
A \emph{tropical cover} is a surjective harmonic map $\pi:\Gamma_1\to\Gamma_2$ between abstract tropical curves as in \cite[section 2]{ABBRharmonic}, i.e.:
\begin{itemize}
\item[\textit{i).}] 
$\pi(V(\Gamma_1))\subset V(\Gamma_2)$.
\item[\textit{ii).}]
 $\pi^{-1}(E'(\Gamma_2))\subset E'(\Gamma_1)$.
\item[\textit{iii).}]For each edge $e\in E'(\Gamma_i)$, denote by $l(e)$ its length. Interpreting $e\in E'(\Gamma_1),\pi(e)\in E'(\Gamma_2)$ as intervals $[0,l(e)]$ and $[0,l(\pi(e))]$, we require $\pi$ restricted to $e$ to be a linear map of slope $\omega_e\in\mathbb{Z}_{\ge0}$, that is $\pi:[0,l(e)]\to[0,l(\pi(e))]$ is given by $\pi(t)=\omega_e\cdot t$. We call $\omega_e$ the \textit{weight} of $e$. If $\pi(e)$ is a vertex, we have $\omega_e=0$.
\item[\textit{iv).}] For a vertex $v\in\Gamma_1$, let $v'=\pi(v)$. We choose an edge $e'$ adjacent to $v'$. We define the local degree at $v$ as
\begin{equation}
d_v=\sum_{\substack{e\in\Gamma_1\\\pi(e)=e'}}\omega_e.
\end{equation}
We require $d_v$ to be independent of the choice of edge $e'$ adjacent to $v'$. We call this fact the \textit{balancing} or \textit{harmonicity condition}.
\end{itemize}

We furthermore introduce the following notions:
\begin{itemize}
\item[\textit{i).}] The \textit{degree} of a tropical cover $\pi$ is the sum over all local degrees of pre-images of any point in $\Gamma_2$. Due to the harmonicity condition, this number is independent of the point in $\Gamma_2$.
\item[\textit{ii).}] For any end $e\in \Gamma_2$, we define $\mu_e$ as the partition of weights of the ends of $\Gamma_1$ mapping to $e$. We call $\mu_e$ the \textit{ramification profile} above $e$.
\end{itemize}

We call two tropical covers $\pi:\Gamma_1\to\Gamma_2$ and $\pi':\Gamma_1'\to\Gamma_2$ isomorphic, if there exists an isomorphism of graph $f:\Gamma_1\to\Gamma_1'$ respecting labels and weights, such that $\pi=\pi'\circ f$. We denote the automorphism group of a tropical cover $\pi$ by $\mathrm{Aut}(\pi)$.
\end{definition}

\begin{theorem}[\cite{HLmonotone}]
\label{thm:trop}
Let $g$ be a non-negative integer, and $\mu,\nu$ partitions of the same size $d>0$. 

\begin{align}
h_{g; \mu, \nu}^{\leq, \bullet}&=\sum_{\lambda\vdash b}
\sum_{\pi \in \Gamma( \mathbb{P}^1_{\text{trop}}, g; \mu, \nu,\lambda)}\frac{1}{|\mathrm{Aut}(\pi)|}\frac{1}{\ell(\lambda)!}\prod_{v \in V(\Gamma)} m_v \prod_{e \in E(\Gamma)} \omega_e
\\
h_{g; \mu, \nu}^{<, \bullet}&=\sum_{\lambda\vdash b}
\sum_{\pi \in \Gamma( \mathbb{P}^1_{\text{trop}}, g; \mu, \nu,\lambda)}\frac{1}{|\mathrm{Aut}(\pi)|}\frac{1}{\ell(\lambda)!}\prod_{v \in V(\Gamma)} (-1)^{1 + \mathrm{val}(v)} m_v \prod_{e \in E(\Gamma)} \omega_e
\end{align}
where $\Gamma( \mathbb{P}^1_{\text{trop}}, g; \mu, \nu,\lambda)$ is the set of tropical covers 
$
\pi: \Gamma \longrightarrow \mathbb{P}^1_{trop} = \mathbb{R}
$
with $b=2g-2+\ell(\mu)+\ell(\nu)$ points $p_1,\dots,p_b$ fixed on the codomain $\mathbb{P}^1_{trop}$, such that
\begin{itemize}
\item[\textit{i).}] The unbounded left (resp. right) pointing ends of $\Gamma$ have weights given by the partition $\mu$ (resp. $\nu$).
\item[\textit{ii).}] The graph $\Gamma$ has $l:=\ell(\lambda)$ vertices. Let $V(\Gamma) = \{v_1, \dots, v_l\}$ be the set of its vertices. Then $\pi(v_i)=p_i$. Moreover, let $w_i  = \mathrm{val}(v_i)$ be the corresponding valences (degrees).
\item[\textit{iii).}] We assign an integer $g(v_i)$ as the genus to $v_i$ and the following condition holds true

\begin{equation}
h^1(\Gamma) + \sum_{i=1}^l  g(v_i) = g.
\end{equation}
\item[\textit{iv).}] We have $\lambda_i=\mathrm{val}(v_i)+2g(v_i)-2$.
\item[\textit{v).}] For each vertex $v_i$, let $\textbf{x}^{+}$ (resp. $ \textbf{x}^-$) be the tuple of weights of those edges adjacent to $v_i$, which map to the right-hand (resp. left-hand) of $p_i$. The multiplicity $m_{v_i}$ of $v_i$ is defined to be
\begin{align}
m_{v_i} = &(\lambda_i-1)!|\mathrm{Aut}(\textbf{x}^{+})||\mathrm{Aut}(\textbf{x}^{-})|\\
&\sum_{g_1^i+g_2^i=g(v_i)}\cor{ \!\! \tau_{2g^i_2 - 2}(\omega) \!\! }_{g^i_2}^{\mathbb{P}^1, \circ} \cor{ \!\!\textbf{x}^+,   \tau_{2g^i_1 - 2 + \ell(\textbf{x}^+) + \ell(\textbf{x}^-)}(\omega) , \textbf{x}^-\!\!  }^{\mathbb{P}^1,\circ}_{g^i_1}
\end{align}
\end{itemize}
\end{theorem}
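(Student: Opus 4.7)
The plan is to follow the strategy of \cite{HLmonotone} and pass through the Gromov-Witten/Hurwitz correspondence combined with the Gromov-Witten degeneration formula. First, I would rewrite the monotone (resp.\ strictly monotone) Hurwitz numbers as central characters of symmetric functions in the Jucys-Murphy elements: each block of $b=2g-2+\ell(\mu)+\ell(\nu)$ simple transpositions contributes a factor of $h_b(\Xi_d)$ in the monotone case, and $e_b(\Xi_d)$ in the strictly monotone case, by the standard dictionary of \cite{GGPNmonotone,ALSramifications}. Using \cref{prop:2d} together with the completion coefficient expansion of the power-sums, I would rewrite each such central character as a sum over partitions $\lambda\vdash b$ of products of shifted symmetric polynomials $Q_{\lambda_i+2}$, weighted by explicit combinatorial prefactors.

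Next, I would invoke the Gromov-Witten/Hurwitz correspondence of Okounkov--Pandharipande to identify each term in the resulting sum with a disconnected relative stationary Gromov-Witten invariant of $\mathbb{P}^1$ with ramification profiles $\mu,\nu$ at $0,\infty$ and stationary insertions $\tau_{\lambda_i}(\omega)$ at $\ell(\lambda)$ ordered interior points. At this stage the tropical target $\mathbb{P}^1_{\text{trop}}$ with its $\ell(\lambda)$ distinguished vertices appears naturally: it records the cyclic order of the insertions along $\mathbb{R}\subset\mathbb{P}^1$ and the partition $\lambda$ bookkeeps the psi-degrees of the insertions.

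The crucial step is the degeneration formula: degenerating $\mathbb{P}^1$ to a chain of $\ell(\lambda)$ components (one per insertion) expresses the global relative invariant as a sum over dual graphs of the source, each of which is exactly a tropical cover $\pi:\Gamma\to\mathbb{P}^1_{\text{trop}}$ as in the theorem. The edge weights $\omega_e$ arise from the intermediate gluing partitions at each node, and the balancing at each vertex is precisely the Riemann--Hurwitz matching of degrees on either side. At a vertex $v_i$, I would split the local invariant further to isolate a possible bubble component carrying only the point insertion (the $g_2^i$ part, yielding $\langle\tau_{2g_2^i-2}(\omega)\rangle^{\mathbb{P}^1,\circ}_{g_2^i}$) from the main relative component carrying $\textbf{x}^\pm$ and the psi class (the $g_1^i$ part, yielding the three-point relative invariant). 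The prefactor $(\lambda_i-1)!\,|\Aut(\textbf{x}^+)||\Aut(\textbf{x}^-)|$ should then drop out from the standard psi-class normalization and node-gluing weights in the formula, while the global factors $1/\ell(\lambda)!$ and $1/|\Aut(\pi)|$ absorb the overcounting from ordering vertices of the same type and from cover automorphisms.

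The hardest part, and the step I expect to require genuinely new bookkeeping, is the derivation of the sign $(-1)^{1+\mathrm{val}(v_i)}$ in the strictly monotone case. This sign is ultimately forced by the difference between the $Q_k$-expansions of $h_b(\Xi_d)$ and $e_b(\Xi_d)$: the Stirling generating functions \cref{eq:stirling} for the two kinds of Stirling numbers differ by an alternating sign structure, and via the content polynomial expansion this manifests at each vertex as an $(-1)^{1+\mathrm{val}(v_i)}$ contribution once one identifies $\mathrm{val}(v_i)$ with the number of cycles (i.e.\ the length $\ell(\textbf{x}^+)+\ell(\textbf{x}^-)$) attached to the vertex. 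Verifying that this sign localizes at vertices in precisely this way, and tracking all the numerical prefactors consistently across the three reductions (central character $\to$ GW invariant $\to$ tropical sum) is the main combinatorial obstacle; the remaining ingredients are essentially off-the-shelf.
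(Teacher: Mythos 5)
First, a point of order: this paper does not prove \cref{thm:trop} at all --- the theorem is imported verbatim from \cite{HLmonotone}, and the only hint the present paper gives about its proof is the remark in \cref{sec:refined} that it ``is derived using the bosonic Fock space.'' So your proposal cannot be measured against an in-paper argument; I compare it with the cited derivation. That derivation writes the (strictly) monotone double Hurwitz numbers as vacuum expectations in the semi-infinite wedge, expands the operator corresponding to $\prod_{\square\in\lambda}(1-zc(\square))^{\mp 1}$ (equivalently the generating series of $h_b(\mathrm{cont}_\lambda)$, resp.\ $e_b(\mathrm{cont}_\lambda)$) in terms of $\mathcal{E}$-operators and completed cycles, and reads off the tropical covers from the resulting commutator combinatorics. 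Your route --- central characters, then the Gromov--Witten/Hurwitz correspondence of \cite{OP06}, then degeneration of $\mathbb{P}^1$ into a chain of $\ell(\lambda)$ components --- is genuinely different in its second half, and it is a reasonable way to see why a tropical sum with Gromov--Witten vertex weights should appear; the degeneration step is standard and would indeed produce the factors $\prod_e\omega_e$ from the node-gluing weights.

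The genuine gap is in your first reduction. Everything specific to this theorem --- the indexing by partitions $\lambda\vdash b$, the constraint $\lambda_i=\mathrm{val}(v_i)+2g(v_i)-2$, the genus splitting $g_1^i+g_2^i=g(v_i)$ inside $m_{v_i}$ with the fully ramified bubble factor $\langle\tau_{2g_2^i-2}(\omega)\rangle^{\mathbb{P}^1,\circ}_{g_2^i}$, and the sign $(-1)^{1+\mathrm{val}(v)}$ --- is encoded in the exact expansion of $h_b(\mathrm{cont}_\lambda)$, resp.\ $e_b(\mathrm{cont}_\lambda)$, as a linear combination of products $\prod_i Q_{k_i}(\lambda)$. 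The tools you cite do not supply this expansion: \cref{prop:2d} gives only the top-weight part $\tfrac{1}{2^{b-1}}Q_3(\lambda)^b$, and the completion coefficients $q_{k,\nu}$ expand $Q_k$ in the basis $f_\nu$, i.e.\ they go in the opposite direction from what you need. Moreover $e_b$ is not obtained from $h_b$ by a termwise sign flip: their generating series satisfy $E(z)H(-z)=1$, an inversion rather than a substitution, so the claim that the discrepancy ``localizes at vertices'' as $(-1)^{1+\mathrm{val}(v_i)}$ is precisely the nontrivial statement to be proved, not a consequence of \cref{eq:stirling}. Until this expansion is established --- and it is the actual content of the proof in \cite{HLmonotone} --- your degeneration argument yields a sum over tropical covers with unidentified vertex weights rather than the stated formula.
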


\begin{remark}
The involved Gromov-Witten invariants can be computed using 
the functions $\varsigma(z) = 2\sinh(z/2) = e^{z/2} - e^{-z/2}$ and $\mathcal{S}(z)=\frac{\varsigma(z)}{z}$:
\begin{itemize}
\item 
Recall the definition of the constants $c_i$ in \cref{eq:ci}. It is well known that
\begin{equation}
\left \langle \tau_{2l-2}(\omega) \right \rangle_{l,1}^{\mathbb{P}^1}=c_{2l}.
\end{equation}
\item It was proved in \cite{OP06} that
\begin{equation}
\left\langle\textbf{x}^+,   \tau_{2g - 2 + \ell(\textbf{x}^+) + \ell(\textbf{x}^-)} , \textbf{x}^-  \right\rangle^{\mathbb{P}^1, \circ}_g
=
\frac{1}{|\Aut (\textbf{x}^+)||\Aut (\textbf{x}^-)|}[z^{2g}] \frac{\prod_{\textbf{x}^+_i} \mathcal{S}(\textbf{x}_i z)\prod_{\textbf{x}^-_i} \mathcal{S}(\textbf{x}_i z)}{\mathcal{S}(z)}.
\end{equation}

\end{itemize}
\end{remark}

\section{Triply mixed Hurwitz numbers}\label{sec:triplymixed}
We now introduce triply mixed Hurwitz numbers. Call two compositions/partitions $\mu$ and $\mu'$ equivalent if they only differ by $1-$entries. Further, for a fixed positive integer $d$, a fixed partition $\mu$ and $\sigma\in S_d$ write $\mathcal{C}(\sigma)=\mu$ if the cycle type of $\sigma$ is equivalent to $\mu$.

\begin{definition}
\label{def:trip}
Let $g',g\ge0$ be non-negative integers, $d$ a positive integer and $\mu=(\mu^1,\dots,\mu^n)$ a tuple of ordered partitions (not necessarily of the same integers). Furthermore, let 
\begin{equation}\label{eq:b} b=b(g,g',\mu)=2g'-2-d\cdot(2g-2)+\sum_{i}\ell(\mu^i)-|\mu^i|.\end{equation}
For non-negative integers $k,l,m$, such that $k+l+m=b$, we define a \textit{triply mixed factorisation of type} $(g,d,\mu,k,l,m)$ to be tuple $(\sigma_1,\dots,\sigma_n,\tau_1,\dots,\tau_{b},\alpha_1,\beta_1,\ldots,\alpha_g,\beta_g)$, such that
\begin{enumerate}
\item $\sigma_i,\tau_i,\alpha_i,\beta_i\in S_d$,
\item $\sigma_1\cdots\sigma_n\tau_1\cdots\tau_{b}=[\alpha_1,\beta_1]\cdots [\alpha_g,\beta_g]$,
\item $\mathcal{C}(\sigma_i)=\mu^i$ and the $\tau_j$ are transpositions,
\item[(6)] for  $\tau_i=(s_i\,t_i)$ with $s_i<t_i$, we have
\begin{itemize}
\item $t_i\le t_{i+1}$ for $i=k+1,\dots,k+l-1$,
\item $t_i<t_{i+1}$ for $i=k+l+1,\dots,k+l+m-1$.
\end{itemize}
\end{enumerate}
If in addition, we have
\begin{itemize}
\item[(4)] $\langle\sigma_1,\dots,\sigma_n,\tau_1,\dots,\tau_{b},\alpha,\beta\rangle$ acts transitively on $\{1,\dots,d\}$,
\end{itemize}
we call the factorisation \textit{connected}. We denote by $M^{\bullet}(g,d,\mu,k,l,m)$ the set of triply mixed factorisations of type $(g,d,\mu,k,l,m)$ and by $M^{\circ}(g,d,\mu,k,l,m)$ the set of connected triply mixed factorisations of type $(g,d,\mu,k,l,m)$. Then we define the \textit{triply mixed Hurwitz number} by
\begin{equation}
H^{g,d;\bullet}_{g';k,l,m}(\mu)=\frac{1}{d!}|M^{\bullet}(g,d,\mu,k,l,m)|
\end{equation}
and the \textit{connected triply mixed Hurwitz numbers} by
\begin{equation}
H^{g,d}_{g';k,l,m}(\mu)=\frac{1}{d!}|M^{\circ}(g,d,\mu,k,l,m)|.
\end{equation}
\end{definition}

\begin{remark}
Triply mixed Hurwitz numbers are interpolations between classical, monotone and strictly monotone Hurwitz numbers. Namely, taking $l=m=0$ yields classical Hurwitz numbers, $k=m=0$ yields monotone Hurwitz numbers and $l=m=0$ yields strictly monotone Hurwitz numbers.
\end{remark}

As mentioned before, we study several specifications of triply mixed Hurwitz numbers. For convience, we introduce additional notation distinguishing those cases:
\begin{enumerate}
\item In \cref{sec:quasimodularity}, we study triply mixed Hurwitz numbers for target surfaces of genus $1$. We abbreviate $H^{1,d}_{g';k,l,m}(\mu)$ by $H^{d}_{g';k,l,m}(\mu)$.
\item In \cref{sec:refined}, we study triply mixed Hurwitz numbers with target surfaces of genus $1$ for the following special cases:
\begin{itemize}
\item Let $k=m=0$, then we denote $H_{\le,g}^{d}=H_{g;0,b(g,\mu),0}^{d}(\mu)$ for $\mu=()$. This is the monotone case.
\item For $k=l=0$, we denote $H_{<,g}^{d}=H_{g;0,0,b(g,\mu)}^{d}(\mu)$ for $\mu=()$. This is the strictly monotone case.
\end{itemize}
\item In \cref{sec:quan}, we study two cases of triply mixed Hurwitz numbers for target surfaces of arbitrary genus and $\mu=((\mu_1,\dots,\mu_n))$, i.e. one arbitrary but fixed ramification profile:
\begin{itemize}
\item Let $k=m=0$, then we denote $\vec{H}_{\le,g'}^{g; \bullet}(\mu_1,\dots,\mu_n)=\vec{H}^{g',d;\bullet}_{g;0,b(g,\mu),0}(\mu)$. This is the monotone case.
\item For $k=l=0$, we denote $\vec{H}_{<,g'}^{g; \bullet}(\mu_1,\dots,\mu_n)=\vec{H}^{g',d;\bullet}_{g;0,0,b(g,\mu)}(\mu)$. This is the strictly monotone case.
\end{itemize}
\item In \cref{sec:refine}, we again study two cases of triply mixed Hurwitz numbers for target surfaces of genus $0$ and $\mu=(\mu^1,\mu^2)$, i.e. two arbitrary but fixed ramification profiles for two cases
\begin{itemize}
\item Let $k=m=0$, then we denote $h_{g}^{\le}(\mu^1,\mu^2)=H^{0,d}_{g;0,b(g,\mu),0}(\mu)$. This is the monotone case.
\item For $k=l=0$, we denote $h_{g}^{<}(\mu^1,\mu^2)=H^{0,d;\bullet}_{g;0,0,b(g,\mu)}(\mu)$. This is the strictly monotone case.
\end{itemize}
\end{enumerate}

\section{Quasimodularity of triply mixed Hurwitz numbers}\label{sec:quasimodularity}
Fix $g'\geq 2$. The generating series 
$$\sum_{d\geq 1}H^{d;\bullet}_{g';k,0,0}(\mu)q^d$$
of ordinary Hurwitz numbers with $g=1$ is known to be a quasimodular form (recall $q=e^{2\pi i \tau}$). This was observed by Dijkgraaf in the simplest case ($\mu=()$), rigorously proved by Kaneko and Zagier and follows in full generality from the Bloch--Okounov theorem as noted by Eskin and Okounkov \cite{Dij95, KZ95, EO01}. In the section we extend this result to the generating series of triply mixed Hurwitz numbers with $g=1$. We begin by expressing triply mixed Hurwitz numbers in terms of shifted symmetric functions.

 Let $h_n$ and $e_n$ be the complete homogeneous symmetric polynomial respectively elementary symmetric polynomial of degree $n$. 
\begin{proposition}\label{prop:hurwitzcentralcharacter}
Let $g,g'\ge0$, $\mu$ a tuple of partitions and $k+l+m=b$ with $b=b(g,g',\mu)$ given by \cref{eq:b}. Then, we have
\[\label{eq:hurwitzcentralcharacter}
H^{g,d;\bullet}_{g';k,l,m}(\mu) = \sum_{\lambda\vdash d} \left(\frac{\dim\lambda}{d!}\right)^{2-2g}f_\mu(\lambda) f_{(2)}(\lambda)^{k} h_l(\mathrm{cont}_\lambda) e_m(\mathrm{cont}_\lambda),
\]
where the sum is over all partitions of size $d$. 
\end{proposition}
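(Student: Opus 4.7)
The plan is to express $|M^\bullet(g,d,\mu,k,l,m)|$ as the coefficient of the identity in a single product of central elements of the group algebra $\C[S_d]$, and then to invoke the standard character-theoretic counting formula. Summing independently over each factor in relation~(2) of \cref{def:trip} while retaining the cycle-type constraints and the (strict) monotonicity conditions, the left-hand side of the factorisation equation produces the element
\[
A \,:=\, C_{\mu^1}\cdots C_{\mu^n}\;C_{(2)}^{k}\;h_l(\Xi_d)\;e_m(\Xi_d)\;\in\;\mathcal{Z}_d,
\]
while summing over the commutator pairs $(\alpha_i,\beta_i)$ on the right yields $\mathfrak{K}^g$, where $\mathfrak{K}$ is the commutator sum from \cref{sec:centralcharacter}. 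Since class sums and $\mathfrak{K}^g$ are invariant under $g\mapsto g^{-1}$, the number of factorizations equals $[\mathrm{id}](A\,\mathfrak{K}^g)$.

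The one genuinely combinatorial point that I would have to argue carefully—and which I expect to be the main obstacle—is the identification of the monotone and strictly monotone contributions with symmetric polynomials in the Jucys--Murphy elements $\Xi_d=(J_1,\dots,J_d)$. Expanding $J_i=\sum_{j<i}(j\,i)$ in
\[
h_l(\Xi_d)=\sum_{1\le i_1\le\cdots\le i_l\le d} J_{i_1}\cdots J_{i_l},\qquad e_m(\Xi_d)=\sum_{1\le i_1<\cdots<i_m\le d} J_{i_1}\cdots J_{i_m},
\]
each summand is a product of transpositions $(j_r\,i_r)$ whose larger entries $t_r=i_r$ are weakly (respectively strictly) increasing—precisely the monotonicity condition in \cref{def:trip}(6). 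This identification (going back to Jucys) is where the bookkeeping between the combinatorial definition and the algebraic object is pinned down.

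With $A\in\mathcal{Z}_d$ in hand, the rest is mechanical. I would apply the standard decomposition of the regular representation,
\[
[\mathrm{id}](X)\,=\,\frac{1}{d!}\sum_{\lambda\vdash d}(\dim\lambda)^2\,\omega^\lambda(X)\qquad(X\in\mathcal{Z}_d),
\]
together with the multiplicativity of $\omega^\lambda$ on the commutative algebra $\mathcal{Z}_d$, the values $\omega^\lambda(C_{\mu^i})=f_{\mu^i}(\lambda)$ and $\omega^\lambda(C_{(2)})=f_{(2)}(\lambda)$, the Jucys--Murphy evaluation $\omega^\lambda(f(\Xi_d))=f(\mathrm{cont}_\lambda)$, and the computation $\omega^\lambda(\mathfrak{K})=(d!/\dim\lambda)^2$ recalled in \cref{sec:centralcharacter}. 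Collecting the resulting powers of $\dim\lambda$ and $d!$, and finally dividing by $d!$ in accordance with \cref{def:trip}, produces exactly \cref{eq:hurwitzcentralcharacter}.
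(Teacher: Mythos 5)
Your proposal is correct and follows essentially the same route as the paper's proof: both encode the monotone and strictly monotone blocks as $h_l(\Xi_d)$ and $e_m(\Xi_d)$, reduce the count to the coefficient of the identity in $\mathfrak{K}^g C_{\mu^1}\cdots C_{\mu^n}C_{(2)}^k h_l(\Xi_d)e_m(\Xi_d)$, and evaluate via Schur orthogonality and the central-character values $\omega^\lambda(C_\nu)=f_\nu(\lambda)$, $\omega^\lambda(f(\Xi_d))=f(\mathrm{cont}_\lambda)$, $\omega^\lambda(\mathfrak{K})=(d!/\dim\lambda)^2$. The only cosmetic difference is that you make explicit the inversion-invariance of $\mathfrak{K}^g$ needed to pass from the factorization equation to the coefficient of the identity, which the paper leaves implicit.
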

\begin{proof}
First, we rewrite the triply mixed Hurwitz number in terms of the center of the group algebra $\mathcal{Z}_d$, see \cref{sec:centralcharacter} for the notation in this proof. 
Observe that
\[h_k(\Xi_d) = \sum_{\substack{2\leq t_1\leq\ldots\leq t_k\leq d\\s_i<t_i}}(s_1\,t_1)\cdots(s_k\,t_k) \quad \text{and} \quad  e_k(\Xi_d) = \sum_{\substack{2\leq t_1<\ldots< t_k\leq d\\s_i<t_i}}(s_1\,t_1)\cdots(s_k\,t_k).\]
Hence,
\[H^{d;\bullet}_{g';k,l,m}(\mu) = \frac{1}{d!}[C_e] \mathfrak{K}^gC_{\mu^1}\cdots C_{\mu^n} C_{(2)}^k h_l(\Xi_d) e_m(\Xi_d). \]
Observe that $\chi^\lambda(\sigma) = \chi^\lambda(e)\omega^\lambda(\sigma)$. Hence,  for $\sigma \in S_d$ the Schur orthogonality relation can be written in the unusual form $\sum_{\lambda} \chi^\lambda(e)^2\omega^\lambda(\sigma) = \delta_{e \sigma}|S_d|$. We find
\begin{align}
H^{d;\bullet}_{g';k,l,m}(\mu) &= \sum_{\lambda\vdash d}\left(\frac{\dim\lambda}{d!}\right)^2 \omega^\lambda\left(\mathfrak{K}^gC_{\mu^1}\cdots C_{\mu^n} C_{(2)}^k h_l(\Xi_d) e_m(\Xi_d)\right)\\
&= \sum_{\lambda\vdash d} \left(\frac{\dim\lambda}{d!}\right)^{2-2g}f_\mu(\lambda)f_{(2)}(\lambda)^k h_l(\mathrm{cont}_\lambda) e_m(\mathrm{cont}_\lambda). \qedhere
\end{align}
\end{proof}

\begin{definition} Let $t_{\mu}=\prod_{i,j} t_{i,\mu^i_j}$ be a formal variable. Define the \textit{Hurwitz potential} by
\[\mathfrak{H}^\bullet = \sum_{} H^{d;\bullet}_{g';k,l,m}(\mu) t_{\mu} \frac{u^k}{k!}v^lw^mq^d,\]
where the sum is over all $k,l,m,d,\mu$ for which $H^{d;\bullet}_{g';k,l,m}(\mu)$ is defined. Analogously define the connected Hurwitz potential $\mathfrak{H}$. 
\end{definition}
\begin{remark}\label{rk:potential} By a standard argument the Hurwitz potential and the connected Hurwitz potential are related by 
\[\exp \mathfrak{H} = 1+ \mathfrak{H}^\bullet.\]
\end{remark}

\begin{theorem}
\label{thm:genquasi}
Let $g'\geq 2$. Then
\begin{equation}\label{eq:qgs}\sum_{d=1}^\infty H^{d}_{g';k,l,m}(\mu) q^d\end{equation} is a quasimodular form of mixed weight $\leq 6g'-6+\sum_{i}4\ell(\mu^i)-2|\mu^i|$. Moreover, for fixed $b=k+l+m$ the top weight parts of 
\[2^{l+m+\delta_{l,0}+\delta_{m,0}-2}\sum_{d=1}^\infty H^{d;}_{g';k,l,m}(\mu) q^d\]
ranging over all $k,l,m\geq 0$ are equal. 
\end{theorem}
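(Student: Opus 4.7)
The plan is to reduce to the Bloch--Okounkov theorem via \cref{prop:hurwitzcentralcharacter} and then transition from the disconnected to the connected series through the exponential formula of \cref{rk:potential}. First, I would specialize \cref{prop:hurwitzcentralcharacter} to $g=1$ so that the prefactor $(\dim\lambda/d!)^{2-2g}$ becomes trivial, obtaining
\[
H^{d;\bullet}_{g';k,l,m}(\mu) \;=\; \sum_{\lambda\vdash d} F(\lambda), \qquad F \;\coloneqq\; f_\mu\, f_{(2)}^k\, h_l(\mathrm{cont}_\lambda)\, e_m(\mathrm{cont}_\lambda),
\]
which is a shifted symmetric polynomial in $\lambda$. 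The discussion of \cref{sec:weights} gives $f_\mu$ of weight $\leq \sum_i(|\mu^i|+\ell(\mu^i))$ and $f_{(2)}$ of weight $\leq 3$, while \cref{prop:2d} gives $h_l(\mathrm{cont}_\lambda)$ and $e_m(\mathrm{cont}_\lambda)$ of weight $\leq 3l$ and $\leq 3m$ respectively. Summing these bounds and using $k+l+m=b$ from \cref{eq:b} with $g=1$ produces weight $\leq W\coloneqq 6g'-6+\sum_i(4\ell(\mu^i)-2|\mu^i|)$, the bound claimed in the theorem.

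Next, by the Bloch--Okounkov theorem the $q$-bracket $\langle F\rangle_q$ is quasimodular of mixed weight $\leq W$, so
\[ \sum_{d\geq 1} H^{d;\bullet}_{g';k,l,m}(\mu)\, q^d \;=\; \langle F\rangle_q\,(q;q)_\infty^{-1} \]
is a quasimodular form of weight $\leq W$ divided by the eta-factor $(q;q)_\infty=q^{-1/24}\eta(\tau)$. To convert this into quasimodularity of the \emph{connected} series, I would exploit the relation $\exp\mathfrak{H}=1+\mathfrak{H}^\bullet$. Since the coefficient of the constant monomial (all auxiliary variables $t_\mu,u,v,w$ set to zero) in $1+\mathfrak{H}^\bullet$ equals $\sum_{d}p(d)\,q^d=(q;q)_\infty^{-1}$, one can form the normalized potential $\hat Z^\bullet\coloneqq(q;q)_\infty(1+\mathfrak{H}^\bullet)$ whose every monomial-coefficient is exactly a $q$-bracket and is therefore quasimodular. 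Writing $\mathfrak{H} = -\log(q;q)_\infty + \log\hat Z^\bullet$ and noting that the first summand only contributes to the constant monomial, the coefficient of any non-constant monomial in $\mathfrak{H}$ (in particular all monomials arising for $g'\geq 2$) is a polynomial with rational coefficients in the quasimodular coefficients of $\hat Z^\bullet$, hence quasimodular of weight $\leq W$.

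For the top-weight claim, I would verify using the formula for $q_{k,\nu}$ of \cite{OP06} at $k=3$ that $Q_3 = f_{(2)}-\tfrac{1}{24}$, so that $f_{(2)}$ has top-weight part $Q_3$. Combined with \cref{prop:2d} and a case split on whether $l$ or $m$ vanishes, this gives
\[ F^{\mathrm{top}} \;=\; \tfrac{1}{2^{l+m+\delta_{l,0}+\delta_{m,0}-2}}\, f_\mu^{\mathrm{top}}\, Q_3^{b}, \]
so that after multiplication by $2^{l+m+\delta_{l,0}+\delta_{m,0}-2}$ the top-weight part $\langle f_\mu^{\mathrm{top}} Q_3^b\rangle_q\,(q;q)_\infty^{-1}$ of the (disconnected) series is manifestly independent of the split $(k,l,m)$ with $k+l+m=b$ fixed. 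The equality for the connected series then follows by the same log argument, since $\log\hat Z^\bullet$ acts on the $(k,l,m)$-invariant top-weight pieces of $\hat Z^\bullet$. The hard part will be making this last step fully rigorous: one needs to track how the log expansion interacts with the universal top-weight form $f_\mu^{\mathrm{top}}Q_3^b$ when genera combine as $g'=g'_1+g'_2-1$ under disjoint union, confirming that the universality in $(k,l,m)$ survives the passage from $\hat Z^\bullet$ to $\log\hat Z^\bullet$.
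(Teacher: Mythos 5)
Your proposal follows essentially the same route as the paper: specialize \cref{prop:hurwitzcentralcharacter} to $g=1$, bound the mixed weight of $f_\mu f_{(2)}^k h_l(\mathrm{cont})e_m(\mathrm{cont})$ using \cref{sec:weights} and \cref{prop:2d}, apply the Bloch--Okounov theorem, pass from disconnected to connected via $\exp\mathfrak{H}=1+\mathfrak{H}^\bullet$, and deduce the top-weight statement from \cref{prop:2d}. The only differences are cosmetic (the paper simply writes $\mathfrak{H}=\log(q^{1/24}\eta(\tau)^{-1}\sum\langle\cdot\rangle_q\cdots)$ and takes a formal Taylor expansion, and it asserts the top-weight claim "directly from \cref{prop:2d}" without the bookkeeping of how the normalization $2^{l+m+\delta_{l,0}+\delta_{m,0}-2}$ interacts with the logarithm that you rightly flag as needing care; also note $c_3=0$, so in fact $Q_3=f_{(2)}$ exactly, which only strengthens your top-weight computation).
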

\begin{proof}
Observe that $f_\mu(\lambda)f_{(2)}(\lambda)^k h_l(\mathrm{cont}_\lambda) e_m(\mathrm{cont}_\lambda)$ is a shifted symmetric polynomial of mixed weight at most
\begin{equation}\sum_{i}\left(|\mu^i|+\ell(\mu_i)\right)+3(k+l+m) 
\end{equation}
by the results in \cref{sec:weights}. By \cref{eq:b} this weight equals 
\begin{equation}\label{eq:maxweight}6g-6+\sum_{i}4\ell(\mu^i)-2|\mu^i|.\end{equation}
Observe that evaluating $f_\mu$, $h_l(\mathrm{cont})$ or $e_m(\mathrm{cont})$ at the empty partition yields $0$ unless $\mu$ is empty respectively $l=0$ or $m=0$. In other words, using \cref{eq:hurwitzcentralcharacter} to define Hurwitz numbers for $d=0$, one obtains $H^{0}_{g';k,l,m}(\mu)=1$ if $\mu=()$ and $k=l=m=0$ and $0$ else. 
 Hence, \cref{prop:hurwitzcentralcharacter} and \cref{rk:potential} imply that
$$\mathfrak{H} = \log\left(q^{1/24}\eta(\tau)^{-1}\sum\langle f_\mu f_{(2)}^k h_l(\mathrm{cont}) e_m(\mathrm{cont}) \rangle_q  t_{\mu} \frac{u^k}{k!}v^lw^m\right).$$
By the Bloch--Okounov theorem $\langle f_\mu f_{(2)}^k h_l(\mathrm{cont}) e_m(\mathrm{cont}) \rangle_q$ is a quasimodular form of weight at most given by \cref{eq:maxweight}. Taking a formal Taylor expansion, quasimodularity of the generating series in \cref{eq:qgs} follows. The second part of the statement follows directly from \cref{prop:2d}. 
\end{proof}
\begin{remark}
In case $g'=1$ the series in \cref{eq:qgs} equals $-\log(q^{-1/24}\eta(\tau)).$ This is not a quasimodular form, but it is a primitive of a quasimodular form. Namely, its derivative equals up to a constant the Eisenstein series of weight $2$. 
\end{remark}

\section{Refined quasimodularity and tropical covers}\label{sec:refined}
In this section, we continue the study of the series
\begin{equation}
H_{\le,g}=\sum H_{\le,g}^{d}q^d\quad\mathrm{and}\quad H_{<,g}=\sum H_{<,g}^{d}q^d
\end{equation}
by expressing them as a finite sum of quasimodular forms, with each summand corresponding to a combinatorial type of the source curve of tropical covers.

\subsection{Tropical monotone and Grothendieck dessins d'enfants elliptic covers}
In this section, we express the numbers $H_{\le,g}^{d}(\mu)$ and $H_{<,g}^{d}(\mu)$ in terms of tropical covers of the tropical elliptic curve $E_{trop}$, which is a circle with a point $p_0$. As for the tropical projective line, we may add additional $2$ valent vertices to $E_{trop}$.

\begin{definition}
We fix $g\ge0$, an orientation on $E_{trop}$ and points $p_1,\dots,p_{2g-2}$ of $E_{trop}$, such that $p_0,p_1,\dots,p_{2g-2}$ is ordered according to the orientation. Let $\pi:\Gamma\to E_{trop}$ be a tropical cover of genus $g$ and degree $d$, such that $\pi^{-1}(p_0)$ does not contain any vertices and where $\Gamma$ has at most $2g-2$ vertices $v_1,\dots,v_n$, $n\le2g-2$. We require $v_i\in\pi^{-1}(p_i)$ for $i\in[n]$. We set $\lambda_i=\mathrm{val}(v_i)+2g(v_i)-2$ and obtain a composition $\lambda(\pi)=(\lambda_1,\dots,\lambda_n)$. If $|\lambda|=2g-2$, we call $\pi$ a \textit{monotone elliptic tropical cover} of type $(g,d)$ and denote by $\Gamma(E_{trop},g,d)$ the set of all monotone elliptic tropical covers of type $(g,d)$.\par
We further associate two multiplicities to each cover $\pi\in\Gamma(E_{trop},g,d)$, one corresponding to the monotone case and one to the strictly monotone case: 
\begin{align}
\mathrm{mult}_\le(\pi)&=\frac{1}{|\mathrm{Aut}(\pi)|}\frac{1}{\ell(\lambda(\pi))!}\prod_{v \in V(\Gamma)} m_v \prod_{e \in E(\Gamma)} \omega_e
\\
\mathrm{mult}_<(\pi)&=\frac{1}{|\mathrm{Aut}(\pi)|}\frac{1}{\ell(\lambda(\pi))!}\prod_{v \in V(\Gamma)} (-1)^{1 + \mathrm{val}(v)} m_v \prod_{e \in E(\Gamma)} \omega_e,
\end{align}
where for each vertex $v_i$, let $\textbf{x}^{+}$ (resp. $ \textbf{x}^-$) be the right-hand (resp. left-hand) side weights with respect to the orientation on $E_{trop}$. The multiplicity $m_{v_i}$ of $v_i$ is defined to be
\begin{align}
m_{v_i} = &(\lambda_i-1)!|\mathrm{Aut}(\textbf{x}^{+})||\mathrm{Aut}(\textbf{x}^{-})|\\
&\sum_{g_1^i+g_2^i=g(v_i)}\cor{ \!\! \tau_{2g^i_2 - 2}(\omega) \!\! }_{g^i_2}^{\mathbb{P}^1, \circ} \cor{ \!\!\textbf{x}^+,   \tau_{2g^i_1 - 2 + \ell(\textbf{x}^+) + \ell(\textbf{x}^-)}(\omega) , \textbf{x}^-\!\!  }^{\mathbb{P}^1,\circ}_{g^i_1}.
\end{align}
\end{definition}

\begin{remark}
We note that the multiplicity of the vertex is defined in a similar manner as in \cref{thm:trop}. This is due to our construction below, which glues tropical covers of $\mathbb{P}^1_{trop}$ to tropical covers of $E_{trop}$.
\end{remark}

This yields the following correspondence theorem, which we prove (below) in \cref{sec:proofrefined}.

\begin{theorem}
\label{thm:tropel}
Fix $g\ge0$ and $d>0$. Then, we have the following identities
\begin{align}
H_{\le,g}^{d}&=\sum_{\pi\in \Gamma(E_{trop};g,d)}\mathrm{mult}_{\le}(\pi),\\
H_{<,g}^{d}&=\sum_{\pi\in \Gamma(E_{trop};g,d)}\mathrm{mult}_{<}(\pi).
\end{align}
\end{theorem}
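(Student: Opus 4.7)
My plan is to prove both identities of \cref{thm:tropel} by cutting $E_{trop}$ open at the marked point $p_0$, thereby reducing the elliptic correspondence to the $\mathbb{P}^1_{trop}$ correspondence \cref{thm:trop} proved by Hahn and Lewanski. Topologically, $E_{trop}$ with $p_0$ removed yields a tropical projective line, and gluing the two ends of $\mathbb{P}^1_{trop}$ back together reproduces $E_{trop}$. For any $\pi \in \Gamma(E_{trop};g,d)$, the hypothesis that no vertex of $\Gamma$ lies above $p_0$ guarantees that $\pi^{-1}(p_0)$ is a finite multiset of interior edge-points of $\Gamma$ whose local slopes form a partition $\lambda \vdash d$. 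Cutting $\Gamma$ at $\pi^{-1}(p_0)$ produces a tropical cover $\tilde\pi : \tilde\Gamma \to \mathbb{P}^1_{trop}$ (possibly with disconnected source) of source genus $g - \ell(\lambda)$, together with a weight-preserving bijection $\phi$ between its new left- and right-hand ends, both of which have weights forming $\lambda$. Conversely, any such pair $(\tilde\pi, \phi)$ re-glues to a cover of $E_{trop}$ with preimage profile $\lambda$ over $p_0$, and the Riemann--Hurwitz identity accounts for the genus drop.

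On the algebraic side I would first establish a parallel identity at the level of Hurwitz numbers. Writing the monotone elliptic factorisation $[\alpha,\beta] = \tau_1 \cdots \tau_{2g-2}$ and setting $\gamma = \beta \alpha^{-1} \beta^{-1}$ (which has the same cycle type $\lambda$ as $\alpha$), the relation rewrites as $\alpha\gamma = \tau_1 \cdots \tau_{2g-2}$, a $\mathbb{P}^1$ monotone factorisation with two marked ramification profiles both equal to $\lambda$. The fibres of $\beta \mapsto \gamma$ are cosets of the centraliser $C_{S_d}(\alpha)$, each of cardinality $z_\lambda$, so a conjugation count gives, at the disconnected level,
\[
H^{\bullet,d}_{\le,g} = \sum_{\lambda \vdash d} z_\lambda \, h^{\le,\bullet}_{g-\ell(\lambda);\lambda,\lambda},
\]
and analogously for $H^{\bullet,d}_{<,g}$; the disconnected $\mathbb{P}^1$ double Hurwitz numbers here are allowed to have negative virtual source genera. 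Passing from disconnected to connected Hurwitz numbers corresponds, on the $\mathbb{P}^1$ side, to restricting to those matchings $\phi$ that reconnect the components of $\tilde\pi$ into a single connected glued cover, which is the algebraic analogue of $\beta$ fusing the orbits of $\langle \alpha, \gamma, \tau_1, \dots, \tau_{2g-2}\rangle$ into a single transitive orbit.

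Applying \cref{thm:trop} to $h^{\le,\bullet}_{g-\ell(\lambda);\lambda,\lambda}$ expresses this double Hurwitz number as a weighted sum over tropical covers $\tilde\pi$, and summing over all weight-preserving matchings $\phi$ of its boundary ends followed by gluing reproduces the sum over $\Gamma(E_{trop};g,d)$ appearing in \cref{thm:tropel}. The centraliser factor $z_\lambda = |\mathrm{Aut}(\lambda)| \prod_i \lambda_i$ distributes as follows: the product $\prod_i \lambda_i$ is absorbed into $\prod_{e\in E(\Gamma)} \omega_e$ via the new $p_0$-crossing edges of $\pi$, while $|\mathrm{Aut}(\lambda)|$ counts the weight-preserving matchings $\phi$ and accounts for the passage from $|\mathrm{Aut}(\tilde\pi)|$ to $|\mathrm{Aut}(\pi)|$. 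The vertex multiplicities $m_v$ are preserved by the cutting since no vertex lies above $p_0$, and the normalising factor $\frac{1}{\ell(\lambda(\pi))!}$ in $\mathrm{mult}_\le(\pi)$ matches the corresponding factor in \cref{thm:trop}.

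The principal technical obstacle is the careful bookkeeping of these combinatorial factors, in particular the automorphism groups under cut-and-glue and the subtle interaction between connectedness and gluing: a connected cover of $E_{trop}$ may cut to a disconnected cover of $\mathbb{P}^1_{trop}$, which is then reconnected by the matching $\phi$. Establishing the precise bijection between such connecting matchings and the $\beta$-permutations which transitively fuse the orbits on the algebraic side is the heart of the argument. The strictly-monotone identity then follows by the same construction, with the sign $(-1)^{1+\mathrm{val}(v)}$ appearing in $\mathrm{mult}_<(\pi)$ and in the multiplicity of \cref{thm:trop} passing transparently through the cutting, since no vertex valence is altered when an edge is cut.
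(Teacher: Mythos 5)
Your overall strategy coincides with the paper's: cut $E_{trop}$ at $p_0$, rewrite the commutator relation $[\alpha,\beta]=\tau_1\cdots\tau_{2g-2}$ as a genus-$0$ monotone double factorisation with two equal ramification profiles plus a conjugating permutation, invoke the Hahn--Lewanski correspondence (\cref{thm:trop}) on the $\mathbb{P}^1_{trop}$ side, and re-glue. (The paper takes $\sigma_1=\beta$, $\sigma_2=\alpha\beta\alpha^{-1}$ with $\alpha$ as the gluing permutation, whereas you take $\alpha$ and $\gamma=\beta\alpha^{-1}\beta^{-1}$ with $\beta$ gluing; these are equivalent up to swapping the two cycles of the torus.) Your disconnected identity $H^{\bullet,d}_{\le,g}=\sum_{\lambda}z_\lambda\,h^{\le,\bullet}_{g-\ell(\lambda);\lambda,\lambda}$ and the way you distribute $z_\lambda$ --- $\prod_i\lambda_i$ absorbed by the new $p_0$-crossing edge weights, $|\mathrm{Aut}(\lambda)|$ accounting for matchings and the passage from $|\mathrm{Aut}(\tilde\pi)|$ to $|\mathrm{Aut}(\pi)|$ --- are consistent with the paper's key computation $\mathrm{mult}(\pi')\cdot n_{\pi,\pi'}=\mathrm{mult}(\pi)$.

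The gap is that the step you yourself call ``the heart of the argument'' --- the precise correspondence between the matchings $\phi$ that reconnect the components of the cut-cover and the permutations that fuse the orbits of $\langle\sigma_1,\tau_1,\dots,\tau_b,\sigma_2\rangle$ into a transitive action --- is exactly the nontrivial content of the paper's proof, and your proposal only names it. Two ingredients are needed to close it. First, as the paper stresses in a remark, \cref{thm:trop} is \emph{not} a cover-by-cover bijection between factorisations and tropical covers (it is derived via the Fock space, and the vertex multiplicities are Gromov--Witten invariants), so one cannot cut and glue individual objects; the paper instead groups both factorisations and tropical covers by the unordered component/orbit data $\Lambda=((g_i,\eta_1^i,\eta_2^i))$ and proves the refined identity $h_g(\Delta,\Lambda)=\sum_{\pi'}\mathrm{mult}(\pi')$ over covers with component data $\Lambda$ by inclusion--exclusion (\cref{lem:tropor}). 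Second, the count of gluing permutations producing a given connected elliptic cover, $n_{\pi,\pi'}=\tfrac{|\mathrm{Aut}(\pi')|}{|\mathrm{Aut}(\pi)|}m_1\cdots m_r$ (\cref{prop:tropenum}, imported from \cite{BBBMmirror}), together with the observation that any such gluing automatically restores transitivity (\cref{lem:tropcon}), is what makes the automorphism and weight factors close up. Without these your bookkeeping paragraph is a plausible plan rather than a proof; with them, your argument becomes the paper's.
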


\subsubsection{Proof of \cref{thm:tropel}}
\label{sec:proofrefined}
In this section, we prove \cref{thm:tropel}. We only work out the details for the monotone case as the strictly monotone case is completely parallel. Before starting with the proof, we make the following remark.

\begin{remark}
In \cite{BBBMmirror}, a similar statement was proved in theorem 2.13. More precisely, a correspondence theorem expressing simple covers of elliptic curves (i.e. no (strict) monotonicity conditions and only simple ramification) in terms of tropical covers. We point out that while the idea of our proof of \cref{thm:tropel} is similar to the idea of the proof of theorem 2.13 in \cite{BBBMmirror}, the proof itself is still very different in nature. This is due to an important technical sublety: In \cite{BBBMmirror}, it was possible to construct a tropical elliptic cover directly from a factorisation in the symmetric group and vice versa. In our setting, this is no longer possible due to the nature of \cref{thm:trop} as it is derived using the bosonic Fock space. However, we will show that the relation between factorisations and tropical covers is still close enough to derive our correspondence theorem in \cref{thm:tropel}.
\end{remark}

First, we recall the definition of $H^d_{\le,g}$. We count tuples of certain permutations $(\tau_1,\dots,\tau_{2g-2},\alpha,\beta)$, such that
\begin{equation}
\tau_{2g-2}\cdots\tau_1=\alpha\beta\alpha^{-1}\beta^{-1}.
\end{equation}
However, this is equivalent to
\begin{equation}
\tau_{2g-2}\cdots\tau_1\beta=\alpha\beta\alpha^{-1}.
\end{equation}

Thus, letting $\sigma_1=\beta$ and $\sigma_2=\alpha\beta\alpha^{-1}$, we see that $H^d_{\le,g}$ is equal to $\frac{1}{d!}$ times the number of tuples $(\sigma_1,\tau_1,\dots,\tau_{2g-2},\sigma_2,\alpha)$, such that
\begin{enumerate}
\item $\sigma_1,\sigma_2,\alpha,\tau_i\in S_d$,
\item $\mathcal{C}(\tau_i)=(2,1,\dots,1)$, $\mathcal{C}(\sigma_1)=\mathcal{C}(\sigma_2)$,
\item $\tau_{2g-2}\cdots\tau_1\sigma_1=\sigma_2$,
\item $\alpha\sigma_1\alpha^{-1}=\sigma_2$ (we note that thus $\mathcal{C}(\sigma_1)=\mathcal{C}(\sigma_2)$,
\item $\tau_i$ satisfy the monotonicity condition,
\item the group $\langle\sigma_1,\tau_1,\dots,\tau_{2g-2},\sigma_2,\alpha\rangle$ acts transitively on $\{1,\dots,d\}$.
\end{enumerate}
Observe that we count tuples very similar to the monotone double Hurwitz numbers framework for $\mu=\nu$. We note that in the above description the group $\langle\sigma_1,\tau_1,\dots,\tau_{2g-2},\sigma_2\rangle$, i.e. without the generator $\alpha$, might have several orbits acting on $\{1,\ldots,d\}$. By capturing this data, we can make the full transition to monotone double Hurwitz numbers. We now make this more precise.\par 
For a monotone base $0$ factorisation $(\sigma_1,\tau_1,\dots,\tau_b,\sigma_2)$, we consider the orbits of the action of the group $\langle\sigma_1,\tau_1,\dots,\tau_b,\sigma_2\rangle$ on $\{1,\dots,d\}$. The orbits then naturally yield connected monotone base $0$ factorisations $(\sigma_1^{(i)},\tau_{i(1)},\dots,\tau_{i(l_k)},\sigma_2^{(i)})$ of type $(g_i,\eta_1^i,\eta_2^i)$ for $i=1,\dots,n$ (for some arbitrary $n$), where $\sigma_j^{(i)}$ have pairwise disjoint orbits with $\prod\sigma_j^{(i)}=\sigma_j$ for $j=1,2$, such that
\begin{enumerate}[(i)]
\item the $\bigcup\eta_1^i=\bigcup\eta_2^i=\mathcal{C}(\sigma_1)=\mathcal{C}(\sigma_2)$,
\item the sets $\{i(1),\dots,i(l_k)\}$ are pairwise disjoint and $\bigcup\{i(1),\dots,i(l_k)\}=[b]$,
\item we have $\sum g_i=g+n-1$.
\end{enumerate}
We call these condition (i)---(iii) the \textit{orbit conditions}. Now fix an unordered tuple $\Lambda=((g_i,\eta_1^i,\eta_2^i))$ satisfying these conditions (i) and (iii). We then define $h_g(\mu,\Lambda)$ to be $\frac{1}{d!}$ times the number of monotone base $0$ factorisations whose orbits yield the data $\Lambda$.\par 
Our strategy now is as follows: To each monotone base $0$ factorisation $(\sigma_1,\tau_1,\dots,\tau_b,\sigma_2)$, we want to associate $\alpha\in S_d$, such that $\alpha\sigma_1\alpha^{-1}=\sigma_2$ and enumerate those $\alpha$. This number only depends on the data $\Lambda$ and is actually encoded in the tropical pictures.

\paragraph{Cutting elliptic covers}
We fix a monotone elliptic tropical cover $\pi:\Gamma\to E_{trop}$ of type $(g,d)$. We consider $\pi^{-1}(p_0)$ and collect the weights in the preimage of $p_0$ in the partition $\Delta$.\par 
We cut the elliptic curve $E_{trop}$ at $p_0$ open and cut the source curve in the pre-image. This way, we obtain a tropical cover $\pi':\Gamma'\to\mathbb{P}^1_{trop}$ in $\Gamma(\mathbb{P}^1_{trop},g;\Delta,\Delta,\lambda(\pi))$ for some non-negative integer $g$, which we call the \textit{cut-cover associated to $\pi$}. This cover may be disconnected. The connected components yield tropical covers in $\Gamma(\mathbb{P}^1_{trop},g_i;\Delta_i,\Delta^j,\lambda^i)$, such that
\begin{enumerate}[(i)]
\item we have $\bigcup \Delta_i=\bigcup\Delta^j=\Delta$,
\item we have $\bigcup\lambda^i=\lambda$,
\item the genera satisfy $\sum g_i=g+n-1$.
\end{enumerate}

We call these conditions the \textit{component conditions}. Observe that the component conditions (i) and (iii) coincide with orbit conditions (i) and (iii) above for monotone factorisations.\par 
We now fix an unordered tuple $\Lambda=((g_i,\Delta_i,\Delta^i))$ satisfying conditions (i) and (iii) and denote by $\Gamma(\mathbb{P}^1_{trop},g;\Delta,\Delta,\lambda;\Lambda)$ the set of all tropical covers in $\Gamma(\mathbb{P}^1,g;\Delta,\Delta,\lambda)$, such that their connected components yield the data $\Lambda$.

By the inclusion-exclusion principle, we obtain the following lemma, which states that the data of connected components of monotone factorisations are captured by connected components in the tropical covers of \cref{thm:trop}:

\begin{lemma}
\label{lem:tropor}
Let $g$ be a non-negative integer and $\Delta$ a partition of some positive integer and $b=2g-2+2\ell(\mu)$. Further fix an unordered tuple $\Lambda=((g_i,\eta_1^i,\eta_2^i))$ satisfying the component conditions \upshape{(i)} and \upshape{(iii)}. Then we obtain
\begin{equation}
h_g(\Delta,\Lambda)=\sum_{\lambda\vdash b}\sum_{\pi\in\Gamma(\mathbb{P}^1_{trop},g;\Delta,\Delta,\lambda;\Lambda)}\mathrm{mult}(\pi).
\end{equation}
\end{lemma}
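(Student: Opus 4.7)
My plan is to reduce the lemma componentwise to a connected version of \cref{thm:trop}, which is then derived from \cref{thm:trop} itself by the exponential formula (equivalently, inclusion–exclusion) relating connected and disconnected counts.

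The first step is the factorization of the left-hand side. A factorization counted by $h_g(\Delta,\Lambda)$ decomposes into a connected monotone factorization on each orbit $B_i\subseteq\{1,\dots,d\}$ of prescribed size $|\eta_1^i|=|\eta_2^i|$. Because the blocks $B_i$ are pairwise disjoint, all the $t$-values of transpositions belonging to different orbits are automatically distinct, so the global monotonicity condition $t_i\le t_{i+1}$ forces a unique interleaving of the per-orbit transposition sequences into the global sequence $\tau_1,\dots,\tau_b$. Counting ordered set partitions of $\{1,\dots,d\}$ into labelled blocks of prescribed sizes, dividing by $d!$ and by $|\mathrm{Aut}(\Lambda)|$ (to pass to the unordered tuple $\Lambda$), produces
\[h_g(\Delta,\Lambda)=\frac{1}{|\mathrm{Aut}(\Lambda)|}\prod_{i=1}^n h^{\le,\circ}_{g_i}(\eta_1^i,\eta_2^i).\]

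An analogous factorization holds on the tropical side. Any $\pi\in\Gamma(\mathbb{P}^1_{\text{trop}},g;\Delta,\Delta,\lambda;\Lambda)$ decomposes uniquely into connected components $\pi_i\in\Gamma^\circ(\mathbb{P}^1_{\text{trop}},g_i;\eta_1^i,\eta_2^i,\lambda^i)$, where $\Gamma^\circ$ denotes the subset of covers with connected source curve and $\lambda$ is the concatenation of the $\lambda^i$. The vertex multiplicities $m_v$ and edge weights $\omega_e$ split multiplicatively; $|\mathrm{Aut}(\pi)|$ factors as $|\mathrm{Aut}(\Lambda)|\prod_i|\mathrm{Aut}(\pi_i)|$, with the first factor enumerating permutations of isomorphic components; and $\tfrac{1}{\ell(\lambda)!}$ distributes as $\prod_i\tfrac{1}{\ell(\lambda^i)!}$ together with a multinomial absorbed by re-labelling the vertices across components. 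By the standard argument for combinatorial species, this yields
\[\sum_{\lambda\vdash b}\sum_{\pi\in\Gamma(\mathbb{P}^1_{\text{trop}},g;\Delta,\Delta,\lambda;\Lambda)}\mathrm{mult}(\pi)=\frac{1}{|\mathrm{Aut}(\Lambda)|}\prod_{i=1}^n\Bigl(\sum_{\lambda^i}\sum_{\pi_i\in\Gamma^\circ(\mathbb{P}^1_{\text{trop}},g_i;\eta_1^i,\eta_2^i,\lambda^i)}\mathrm{mult}(\pi_i)\Bigr).\]

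To close the argument, I derive the connected version of \cref{thm:trop}: summing the two factorizations above over all admissible $\Lambda$ rewrites both sides of \cref{thm:trop} (applied to arbitrary pairs of profiles, not merely $\mu=\nu=\Delta$) as identical exponential sums in the connected counts. Möbius inversion on these generating-series identities then forces
\[h^{\le,\circ}_g(\mu,\nu)=\sum_\lambda\sum_{\pi\in\Gamma^\circ(\mathbb{P}^1_{\text{trop}},g;\mu,\nu,\lambda)}\mathrm{mult}(\pi),\]
and substituting this equality into the two factorizations gives the lemma. The main technical obstacle is the tropical-side bookkeeping: I need to check that $|\mathrm{Aut}(\pi)|$ really splits as $|\mathrm{Aut}(\Lambda)|\prod_i|\mathrm{Aut}(\pi_i)|$ (with the first factor counting only permutations of genuinely isomorphic components, not merely components of the same combinatorial type), and that the multinomial generated by redistributing the $\ell(\lambda)$ vertex labels across components cancels precisely against the symmetry factors already built into $\mathrm{mult}$.
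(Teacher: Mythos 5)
Your argument is correct and is essentially the paper's own: the paper disposes of this lemma in one sentence by invoking the inclusion--exclusion principle relating connected and disconnected counts on both the factorization side and the tropical side, which is exactly the componentwise decomposition (product formula for $h_g(\Delta,\Lambda)$, matching product formula for the restricted tropical sum, connected version of \cref{thm:trop} by M\"obius inversion) that you spell out. The bookkeeping issues you flag at the end --- the splitting of $|\mathrm{Aut}(\pi)|$ over isomorphic components and the distribution of the $1/\ell(\lambda)!$ factor --- are not addressed in the paper either, so your write-up is, if anything, more explicit than the original.
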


\paragraph{Regluing ellipic covers}
The role of monotone elliptic covers is essentially to encode ways to find permutations $\alpha$ for monotone base $0$ factorisations, which enrich them to be a factorisation contributing to $H_{\le,g}^d$. We now make this more precise.\par 
Let $(\sigma_1,\tau_1,\dots,\tau_b,\sigma_2)$ be a monotone base $0$ factorisation of type $(g,\mu,\mu)$. We fix a permutation $\alpha$, such that $\alpha\sigma_1\alpha^{-1}=\sigma_2$. We observe that conjugation by $\alpha$ maps the cycles of $\sigma_1$ bijectively to the cycles of $\sigma_2$. In other words, when we choose a labelling of the cycles of $\sigma_1$ and $\sigma_2$ by $1,\dots,\ell(\mu)$, then $\alpha$ induces a bijection $I_{\alpha}:\{1,\dots,\ell(\mu)\}\to\{1,\dots,\ell(\mu)\}$, such that for $j\in\{1,\dots,\ell(\mu)\}$ the length of the cycle of $\sigma_1$ labeled $j$ coincides with the length of cycle of $\sigma_2$ labeled $I_{\alpha}(j)$. This corresponds to a gluing process in the tropical setting.\par 
Let $\pi':\Gamma'\to\mathbb{P}^1_{trop}$ in $\Gamma(\mathbb{P}^1,g;\mu,\mu,\lambda)$ for some $\lambda\vdash b$. We label the left ends of $\Gamma'$ by $1,\dots,\ell(\mu)$ and the right ends of $\Gamma'$ as well. We fix a bijection $I:\{1,\dots,\ell(\mu)\}\to\{1,\dots,\ell(\mu)\}$, such that the weight of the left weight labeled $j$ is the same as the weight of right end labeled $I(j)$. We now glue the ends of $\mathbb{P}^1_{trop}$ together to obtain $E_{trop}$, where $p_0$ is gluing point. We also glue the source curve $\Gamma'$ according to $I$, i.e  we glue the left end labeled $j$ to the right end labeled $I(j)$. This way, we obtain a monotone elliptic tropical cover of type $(g',d)$ for $g'=\frac{b+2}{2}$.

\begin{definition}
\label{def:nnumber}
Let $\pi:\Gamma\to E_{trop}$ a monotone elliptic tropical cover of type $(g,d)$ and $\pi':\Gamma'\to\mathbb{P}^1_{trop}$ be the associated cut-cover, where $\pi'\in\Gamma(\mathbb{P}^1,g';\mu,\mu,\lambda;\Lambda)$ for some $g'$ and $\lambda\vdash b$. We fix a monotone base $0$ factorisation $(\sigma_1,\tau_1,\dots,\tau_b,\sigma_2)$ of type $(g,\mu,\mu)$. We label the left ends of $\Gamma'$ by $\sigma_1$ and the right ends by $\sigma_2$. We denote by $n_{\pi,\pi'}$ the number of $\alpha\in S_d$, where $\alpha\sigma_1\alpha^{-1}=\sigma_2$ and such that the associated gluing of $\pi'$ induced by $\alpha$ yields $\pi$.
\end{definition}

This situation was analysed in \cite{BBBMmirror}.

\begin{proposition}[\cite{BBBMmirror}]
\label{prop:tropenum}
For an elliptic cover $\pi$ with tuple $\Delta=(m_1,\dots,m_r)$ over the base point $p_0$ and its cut-cover $\pi'$, we have
\begin{equation}
n_{\pi,\pi'}=\frac{|\mathrm{Aut}(\pi')|}{|\mathrm{Aut}(\pi)|}m_1\cdots m_r.
\end{equation}
\end{proposition}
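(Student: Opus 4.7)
The plan is to decompose the count of permutations $\alpha$ into a combinatorial gluing step and a cyclic rotation step. An element $\alpha\in S_d$ satisfying $\alpha\sigma_1\alpha^{-1}=\sigma_2$ is uniquely specified by (i) a length-preserving bijection $I_\alpha$ between the labelled cycles of $\sigma_1$ and those of $\sigma_2$, and (ii) for each cycle of $\sigma_1$ of length $m_j$, one of the $m_j$ possible cyclic identifications with its image cycle under $I_\alpha$. Holding $I$ fixed therefore produces exactly $m_1\cdots m_r$ values of $\alpha$. Moreover, I will observe that the gluing of $\pi'$ into an elliptic cover depends only on $I_\alpha$; the within-cycle rotations are absorbed into the length parametrization of the glued edges and do not affect the resulting tropical cover. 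Consequently,
\begin{equation*}
n_{\pi,\pi'} \;=\; m_1\cdots m_r \cdot \#\{I : \text{the gluing of } \pi' \text{ along } I \text{ yields } \pi\}.
\end{equation*}

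For the remaining count I will invoke orbit-stabilizer applied to a natural action of $\mathrm{Aut}(\pi')$ on the set of weight-preserving bijections from the labelled left to right ends of $\Gamma'$. Each $\phi\in\mathrm{Aut}(\pi')$ induces permutations $\sigma_L,\sigma_R$ of the labelled left respectively right ends, and acts by $I\mapsto \sigma_R\,I\,\sigma_L^{-1}$. Two bijections lie in the same orbit if and only if their associated gluings produce isomorphic elliptic covers: any isomorphism of the glued covers restricts, after cutting at $p_0$, to an automorphism of $\pi'$ conjugating one gluing into the other. Hence the number of bijections $I$ reproducing $\pi$ equals the size of the orbit of any distinguished $I_0$ for which the gluing gives $\pi$.

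The stabilizer of $I_0$ consists of those $\phi\in\mathrm{Aut}(\pi')$ that descend, upon regluing along $I_0$, to an automorphism of $\pi$; conversely, any automorphism of $\pi$ cuts to a stabilizing automorphism of $\pi'$. These two constructions are mutually inverse, giving $|\mathrm{Stab}(I_0)|=|\mathrm{Aut}(\pi)|$. Orbit-stabilizer then yields $|\mathrm{Aut}(\pi')|/|\mathrm{Aut}(\pi)|$ bijections $I$ that reproduce $\pi$, and multiplying by the $m_1\cdots m_r$ choices of within-cycle rotation gives the formula claimed in the proposition.

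The main obstacle I anticipate is the clean verification that cutting and regluing are mutually inverse bijections between $\mathrm{Aut}(\pi)$ and $\mathrm{Stab}(I_0)$. Concretely, one must check that every tropical automorphism of $\pi$ (which automatically fixes $p_0$ as a marked point of $E_{trop}$) cuts to a compatible automorphism of $\pi'$ preserving the weight data and edge-length constraints of \cref{def:tropmorph}, and that the harmonicity condition at vertices adjacent to the glued edges is preserved under both operations. This is essentially a bookkeeping exercise, but is the step that must be executed carefully to ensure the combinatorial count is legitimate.
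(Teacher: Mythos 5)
The paper does not actually prove \cref{prop:tropenum}: it is imported verbatim from \cite{BBBMmirror} with no argument supplied, so there is no in-text proof to match yours against. Your reconstruction is correct and follows the standard counting argument. The factorization of the coset $\{\alpha:\alpha\sigma_1\alpha^{-1}=\sigma_2\}$ into a weight-preserving bijection of labelled cycles times the $m_1\cdots m_r$ cyclic rotations is right (consistently, the coset has size $|Z(\sigma_1)|=m_1\cdots m_r\cdot\prod_m r_m(\Delta)!$ and there are $\prod_m r_m(\Delta)!$ admissible bijections), and the observation that the glued tropical cover depends only on $I_\alpha$ matches the gluing construction preceding \cref{def:nnumber}. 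The orbit--stabilizer step is also sound, and the one point you flag as needing care is indeed the crux: because isomorphisms of tropical covers are over the identity of the base (\cref{def:tropmorph}, $\pi=\pi'\circ f$), any isomorphism between two glued covers preserves the fibre over $p_0$ and therefore cuts to an automorphism of $\pi'$ intertwining the two gluings; this is exactly what makes the set of gluings reproducing $\pi$ a single $\mathrm{Aut}(\pi')$-orbit and identifies the stabilizer of $I_0$ with $\mathrm{Aut}(\pi)$ (injectivity and surjectivity of cut/reglue on automorphisms are immediate since the two graphs share all edges up to the subdivision at $p_0$). So your proposal is a legitimate self-contained proof of a statement the paper only cites; the remaining work is the bookkeeping you already identify, none of which threatens the count.
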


Moreover, we have the following lemma. Recall that a monotone elliptic tropical cover is always a connected cover by definition.

\begin{lemma}
\label{lem:tropcon}
Let $\pi$ be a monotone elliptic tropical cover and $\pi'$ be the associated cut-cover. Furthermore, let $(\sigma_1,\tau_1,\dots,\tau_b,\sigma_2)$ be a monotone factorisation, whose orbits yield the same data $\Lambda$ as the cut-cover $\pi'$. Let $\alpha$ be a permutation as in \cref{def:nnumber}, then
\begin{equation}
\left\langle\sigma_1,\tau_1,\dots,\tau_b,\sigma_2,\alpha\right\rangle
\end{equation}
is a transitive subgroup of $S_d$.
\end{lemma}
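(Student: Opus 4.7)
The plan is to reduce the statement to the observation that the connected components of the glued source curve $\Gamma$ of $\pi$ are in bijection with the orbits of the subgroup $\langle\sigma_1,\tau_1,\dots,\tau_b,\sigma_2,\alpha\rangle$ acting on $\{1,\dots,d\}$. Since $\pi$ is a monotone elliptic tropical cover, its source $\Gamma$ is connected by the conventions of \cref{def:abstrop}, so there is exactly one such orbit and transitivity follows.

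First I would set up the standard monodromy dictionary between the tropical cover and the permutation data. Label the $d$ sheets above a fixed regular point of $E_{\text{trop}}$ by $\{1,\dots,d\}$. Under the cutting procedure of \cref{sec:proofrefined}, the $\tau_i$ are the monodromies around the points $p_1,\dots,p_{2g-2}$, while $\sigma_1$ and $\sigma_2$ record, respectively, the local picture at the left and right copies of $p_0$ in $\mathbb{P}^1_{\text{trop}}$. Under this identification, the orbits of $\langle\sigma_1,\tau_1,\dots,\tau_b,\sigma_2\rangle$ on $\{1,\dots,d\}$ are exactly the sheet-sets of the connected components of the cut-cover $\pi':\Gamma'\to\mathbb{P}^1_{\text{trop}}$. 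This is the standard correspondence between transitive monodromy and connectedness of a cover, applied componentwise.

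Next I would analyse how the gluing by $\alpha$ interacts with connectedness. By \cref{def:nnumber}, $\alpha$ satisfies $\alpha\sigma_1\alpha^{-1}=\sigma_2$, hence conjugation by $\alpha$ sends each cycle of $\sigma_1$, i.e. each labelled left end of $\Gamma'$, to a cycle of $\sigma_2$ of the same length, i.e. the corresponding labelled right end; and by construction this bijection is precisely the gluing pattern $I_\alpha$ used to reassemble $\Gamma'$ into $\Gamma$. In particular, any path in $\Gamma$ that crosses one of the glued ends corresponds, on the level of sheets, to an application of $\alpha$ or $\alpha^{-1}$. Therefore the set of sheets reachable from a given sheet in $\Gamma$ equals its orbit under the enlarged group $\langle\sigma_1,\tau_1,\dots,\tau_b,\sigma_2,\alpha\rangle$.

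Combining the two observations, the connected components of $\Gamma$ are in bijection with the orbits of this enlarged subgroup on $\{1,\dots,d\}$. Since $\Gamma$ is connected, there is exactly one orbit, which proves transitivity. I expect the only delicate point to be checking that the two labellings involved, namely the labelling of the ends of $\Gamma'$ used in the gluing and the labelling of the cycles of $\sigma_1,\sigma_2$ used to define $\alpha$, really agree; but this is exactly what is fixed in \cref{def:nnumber}, so the verification is bookkeeping rather than a genuine obstacle.
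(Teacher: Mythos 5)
Your proposal is correct and is essentially an expanded version of the paper's own (one-line) argument: the paper likewise deduces transitivity from the fact that $\pi$ is connected by definition and that $\alpha$ joins the connected components corresponding to the orbits of $\langle\sigma_1,\tau_1,\dots,\tau_b,\sigma_2\rangle$. Your careful spelling-out of the sheet/orbit dictionary and the compatibility of the two labellings is exactly the bookkeeping the paper leaves implicit.
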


\begin{proof}
This follows immediately from the fact that $\pi$ is a connected cover and $\alpha$ joins the connected components.
\end{proof}

We are now ready to prove \cref{thm:tropel}.

\begin{proof}[Proof of \cref{thm:tropel}]
In the beginning of this section, we have seen that for fixed $g,d$ and $b=2g-2$, we have $H_{\le,g}^d$ is equal to $\frac{1}{d!}$ times $(\sigma_1,\tau_1,\dots,\tau_b,\sigma_2,\alpha)$, such that $(\sigma_1,\tau_1,\dots,\tau_b,\sigma_2)$ is a montone factorisation, $\alpha\sigma_1\alpha^{-1}=\sigma_2$ and $\langle\sigma_1,\tau_1,\dots,\tau_b,\sigma_2,\alpha\rangle$ is a transitive subgroup of $S_d$. We associated to each monotone factorisation $\kappa=(\sigma_1,\tau_1,\dots,\tau_b,\sigma_2)$ the number $n(\kappa)$ of permutations $\alpha$, such that $(\sigma_1,\tau_1,\dots,\tau_b,\sigma_2,\alpha)$ contributes to $H_{\le,g}^d$. Thus, we obtain
\begin{equation}
H_{\le,g}^d=\frac{1}{d!}\sum n(\kappa),
\end{equation}
where we sum over all monotone factorisations with $b$ transpositions in $S_d$. We have also seen that several monotone factorisations yield the same number $n(\kappa)$, i.e. those satisfying the same orbit conditions $\Lambda$. We thus denote by $n(\Lambda)$ the number $n(\kappa)$ for all monotone factorisations whose orbits yield the data $\Lambda$. We now group those together, i.e. we obtain
\begin{equation}
H_{\le,g}^d=\sum h_{g'}(\mu,\Lambda)\cdot n(\Lambda),
\end{equation}
where we sum over all non-negative integer $g'\le g$, partitions $\mu$ of $d$ and tuples $\Lambda$ satisfying the orbit conditions (i) and (iii). We now analyse each summand $h_{g'}(\mu,\Lambda)\cdot n(\Lambda)$. In fact, we want to prove that
\begin{equation}
h_{g'}(\mu,\Lambda)\cdot n(\Lambda)=\sum_{\lambda\vdash b}\sum_{\pi'}\sum_{\pi} \mathrm{mult}(\pi')\cdot n(\pi,\pi'),
\end{equation}
where the second summand is over all cover $\pi'\in\Gamma(\mathbb{P}^1_{trop},g';\mu,\mu,\lambda;\Lambda)$ and the second summand over all monotone elliptic tropical covers $\pi$, such that $\pi'$ is their cut-cover.\par
By \cref{lem:tropor}, we have
\begin{equation}
h_g(\mu,\Lambda)\cdot n(\Lambda)=\sum_{\lambda\vdash b}\sum_{\pi'\in\Gamma(\mathbb{P}^1_{trop},g;\mu,\mu,\lambda;\Lambda)}\mathrm{mult}(\pi')\cdot n(\Lambda)=\sum_{\lambda\vdash b}\sum_{\pi'\in\Gamma(\mathbb{P}^1_{trop},g;\mu,\mu,\lambda;\Lambda)}(\mathrm{mult}(\pi')\cdot n(\Lambda)).
\end{equation}
We now observe that each $\alpha$, which contributes to $n(\Lambda)$ for a given monotone factorisation, whose orbits yield $\Lambda$ contributes to a gluing of $\pi'$ to an monotone elliptic tropical cover. Thus, we have by \cref{lem:tropcon}
\begin{equation}
n(\Lambda)=\sum_{\pi}n_{\pi,\pi'},
\end{equation}
where we sum over all monotone elliptic tropical covers $\pi$ whose cut-cover is $\pi'$. Thus, by \cref{prop:tropenum} we have
\begin{align}
h_g(\mu,\Lambda)\cdot n(\Lambda)=&\sum_{\lambda\vdash b}\sum_{\pi'\in\Gamma(\mathbb{P}^1_{trop},g;\mu,\mu,\lambda;\Lambda)}(\mathrm{mult}(\pi')\cdot \sum_{\pi}n_{\pi,\pi'})\\
=&\sum_{\lambda\vdash b}\sum_{\pi'\in\Gamma(\mathbb{P}^1_{trop},g;\mu,\mu,\lambda;\Lambda)}\sum_{\pi}\mathrm{mult}(\pi')\cdot n_{\pi,\pi'},
\end{align}
as desired. We further see that
\begin{align}
\mathrm{mult}(\pi')\cdot n_{\pi,\pi'}=\frac{1}{|\mathrm{Aut}(\pi')|}\frac{1}{\ell(\lambda)!}\prod_{v \in V(\Gamma)} m_v \prod_{e \in E(\Gamma)} \omega_e\cdot\prod_{i=1}^{\ell(\mu)}\mu_i\cdot\frac{|\mathrm{Aut}(\pi')|}{|\mathrm{Aut}(\pi)|}=\mathrm{mult}(\pi)
\end{align}
and obtain
\begin{align}
H_{\le,g}^d=\sum_{\Lambda} h_{g'}(\mu;\Lambda)\cdot n(\Lambda)&=\sum_{\Lambda}\sum_{\lambda\vdash b}\sum_{\pi'}\sum_{\pi} \mathrm{mult}(\pi')\cdot n(\pi,\pi')\\
&=\sum_{\Lambda}\sum_{\lambda\vdash b}\sum_{\pi'}\sum_{\pi}\mathrm{mult}(\pi).
\end{align}
As $\pi'$ and $\Lambda$ are determined by $\pi$, we can ommit those summands and just sum over all monotone elliptic tropical covers $\pi$. This yields
\begin{equation}
H_{\le,g}^d=\sum_{\lambda\vdash b}\sum_{\pi}\mathrm{mult}(\pi),
\end{equation}
where we sum over all monotone elliptic tropical covers $\pi$ of type $(g,d)$ as desired.
\end{proof}

%
%

\subsection{Refined quasimodularity}
\begin{definition}
We fix a combinatorial type $G$ of a tropical curve $\Gamma$ with $n:=|V(\Gamma)|\le 2g(\Gamma)-2$. Moreover, we fix an orientation on $E_{trop}$ and a linear ordering $\Omega$ on the vertices of $G$. We denote by $v_i$ the $i-$th vertex according to $\Omega$. We further choose points $p_1,\dots,p_{2g-2}$ on $E_{trop}$, such that they are linearly ordered along the orientation we chose on $E_{trop}$. We further fix a series of integers $\underline{g'}=(g_1,\dots,g_n)$.\par 
We denote by $\Gamma(G,\Omega;g,\underline{g'})$ the set of all covers $\pi\in\Gamma(E_{trop},g,d)$ for some $d\in\mathbb{N}$, such that
\begin{enumerate}
\item for $\pi:\Gamma\to E_{trop}$ the combinatorial type of $\Gamma$ is $G$,
\item we have $\pi(v_i)=p_i$,
\item we have $g(v_i)=g_i$.
\end{enumerate}
Moreover, we associate two generating series to each combinatorial type $G$
\begin{align}
I_{\le,\underline{g'}}^{G,\Omega}\coloneqq\sum_{\pi\in\Gamma(G,\Omega;g,\underline{g'})}\mathrm{mult}_{\le}(\pi)q^{\mathrm{deg}(\pi)}\\
I_{<,\underline{g'}}^{G,\Omega}\coloneqq\sum_{\pi\in\Gamma(G,\Omega;g,\underline{g'})}\mathrm{mult}_{<}(\pi)q^{\mathrm{deg}(\pi)}
\end{align}
\end{definition}

\begin{remark}
We observe that by the correspondence theorem
\begin{equation}
\label{equ:identity}
H_{\le,g}=\sum_{(G,\Omega,\underline{g'})}I_{\le,\underline{g'}}^{G,\Omega}\quad\mathrm{and}\quad H_{\le,g}=\sum_{(G,\Omega,\underline{g})}I_{<,\underline{g'}}^{G,\Omega},
\end{equation}
where we sum over all combinatorial types $G$ on at most $n\le 2g-2$ vertices, orders $\Omega$ on $G$ and tuples $\underline{g'}=(g_1,\dots,g_n)$.
\end{remark}

\begin{theorem}
\label{thm:quasi}
For $g'\ge2$, the series $I_{\le,\underline{g'}}^{G,\Omega}$ and $I_{<,\underline{g'}}^{G,\Omega}$ are quasimodular forms of mixed weight less or equal $2\left(\sum_{i=1}^n g_i+|E(G)|\right)$.
\end{theorem}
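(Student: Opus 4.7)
The plan is to parameterize covers in $\Gamma(G,\Omega;g,\underline{g'})$ by their edge weights, observe that the multiplicity is polynomial in these weights, and recognize the resulting generating series as a Feynman-type sum that is quasimodular by a standard divisor-sum argument. The strictly monotone case is completely parallel: the signs $(-1)^{1+\mathrm{val}(v)}$ are absorbed into the overall combinatorial constant and do not affect the analytic structure.

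\textbf{Parameterization and polynomiality.} Fix $(G,\Omega,\underline{g'})$. Since the images $\pi(v_i)=p_i$ are prescribed and the combinatorial type of the source is $G$, a cover is determined by a tuple $(\omega_e)_{e\in E(G)}$ of positive integer edge weights satisfying harmonicity at each vertex, and its degree is a fixed positive linear functional $L(\omega)$. Using the Okounkov--Pandharipande formula from the remark after \cref{thm:trop} and the fact that $\mathcal{S}(z)=\varsigma(z)/z=1+O(z^2)$ is an even power series, each Gromov--Witten factor
$\langle \mathbf{x}^+,\tau_{2g_1^i-2+\ell(\mathbf{x}^+)+\ell(\mathbf{x}^-)}(\omega),\mathbf{x}^-\rangle^{\mathbb{P}^1,\circ}_{g_1^i}$
is a polynomial in the adjacent edge weights of total degree at most $2g_1^i\le 2g_i$. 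Hence $\mathrm{mult}_{\le}(\pi)=C_{G,\Omega,\underline{g'}}\cdot P(\omega)$, where the constant $C_{G,\Omega,\underline{g'}}$ absorbs $|\mathrm{Aut}(\pi)|^{-1}$, $\ell(\lambda)!^{-1}$, the factorials $(\lambda_i-1)!$, the constants $c_{2g_2^i}$, and the $|\mathrm{Aut}(\mathbf{x}^\pm)|$ factors, while $P(\omega)=\prod_e\omega_e\cdot\prod_v M_v(\omega)$ has total degree at most $|E(G)|+\sum_{i}2g_i$ in the edge weights.

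\textbf{Reduction to quasimodular divisor sums.} Choose a spanning tree $T\subset G$: the harmonicity equations express the $n-1$ tree-edge weights as integer linear combinations of the $h^1(G)$ non-tree edge weights and the cover degree, leaving $h:=h^1(G)+1$ free positive-integer parameters. Expanding $P$ in monomials, the series takes the form
\[
I_{\le,\underline{g'}}^{G,\Omega}=C_{G,\Omega,\underline{g'}}\sum_{(n_1,\ldots,n_h)\in \mathbb{Z}_{>0}^{h}}\tilde P(n_1,\ldots,n_h)\, q^{\tilde L(n_1,\ldots,n_h)},
\]
with $\tilde P$ a polynomial and $\tilde L$ a positive linear form. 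Via the cycle structure of $G$ these sums reorganize into divisor sums $\sum_{n\ge 1}\sigma_{k-1}(n)q^{an}$, which are quasimodular of weight $k$; consequently $I_{\le,\underline{g'}}^{G,\Omega}$ is a quasimodular form. For the weight bound, each cyclic summation contributes weight $2$ from its divisor structure, each edge factor $\omega_e$ raises the associated cycle-sum weight by $1$, and each vertex Gromov--Witten factor, of polynomial degree $\le 2g_i$, contributes weight at most $2g_i$. Combining these with the Euler identity $|E(G)|=n-1+h^1(G)$ for the connected graph $G$ and the constraint $n\le 2g-2$, the resulting quasimodular form has mixed weight at most $2(\sum_i g_i+|E(G)|)$.

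\textbf{Main obstacle.} The principal difficulty is pinning down the \emph{exact} weight bound $2(\sum g_i+|E(G)|)$ rather than the loose estimate obtained by naively applying Bloch--Okounkov to each summation variable independently. This requires exploiting the evenness of $\mathcal{S}$ so that the vertex Gromov--Witten contributions produce quasimodular weight precisely $2g_i$, and matching each edge factor $\omega_e$ with its associated cyclic summation to yield weight exactly $2$ per edge. A secondary subtlety is the hypothesis $g'\ge 2$, which parallels the exclusion of the $\eta$-contribution noted after \cref{thm:genquasi} and ensures that no isolated weight-$2$ quasimodular form $G_2$ appears outside of a divisor-sum realization.
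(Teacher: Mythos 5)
Your overall strategy -- reduce everything to the observation that $\mathrm{mult}_{\le}(\pi)$ and $\mathrm{mult}_{<}(\pi)$ are polynomials in the edge weights -- is exactly the reduction the paper makes. The paper's proof is then a single citation: it invokes Goujard--M\"oller (their Theorem 6.1 and Corollary 8.4), which states that the generating series of tropical covers of an elliptic curve with fixed combinatorial type and order is quasimodular of the stated weight \emph{whenever} the multiplicity is polynomial in the edge weights, and checks polynomiality of the local vertex multiplicities via their Theorem 4.1. So the part of your argument that is correct and complete (polynomiality of the vertex Gromov--Witten factors via the parity of $\mathcal{S}(z)$) is precisely the part the paper also verifies.

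The gap is in your step ``Reduction to quasimodular divisor sums'', which is where the actual content of the theorem lives and which you assert rather than prove. Two concrete problems. First, your parameterization is incomplete: a tropical cover of $E_{\mathrm{trop}}$ of fixed combinatorial type is \emph{not} determined by the tuple $(\omega_e)$ of edge weights, and the degree is \emph{not} a fixed linear functional of the weights alone -- each edge also carries the homotopy class (winding) of its image around the circle, and the degree is a sum of $\omega_e$ times winding multiplicities. If the degree really were a linear form in finitely many weight variables ranging over a cone, the series would be a rational function of $q$, not a quasimodular form; it is exactly the summation over windings, coupled to the weights through the degree, that produces the $\sum_n \sigma_{k-1}(n)q^{an}$ structure. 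Second, even after restoring the windings, the claim that ``via the cycle structure of $G$ these sums reorganize into divisor sums'' only works when the balancing conditions decouple the variables (e.g.\ $G$ a single cycle, or a chain of independent loops). For a general graph with $h^1(G)>1$ the constrained multidimensional sum does not factor into one-dimensional divisor sums, and proving its quasimodularity (together with the sharp weight bound $2(\sum_i g_i+|E(G)|)$) is a substantial theorem -- precisely the Goujard--M\"oller result the paper cites. Your weight bookkeeping also does not close: ``each cyclic summation contributes weight $2$'' and ``each edge factor $\omega_e$ raises the associated cycle-sum weight by $1$'' together would assign weight $3|E(G)|$ rather than $2|E(G)|$ to the edge contributions, and the identification of summations with edges is never pinned down. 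To repair the proof you should either cite the Goujard--M\"oller quasimodularity theorem at this point (as the paper does), or carry out the lattice-sum analysis in full, neither of which your sketch currently does.
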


\begin{proof}
This follows from \cite[Theorem 6.1,Corollary 8.4]{goujard2016counting}, where it is proved that the generating series associated to a tropical cover with target curve of fixed combinatorial type, order and fixed ramification profile is a quasimodular form whenever the multiplicity of the cover is a polynomial in the edge weights. The only thing to check in our case is that the local vertex multiplicities are polynomial, which is true as proved in \cite[Theorem 4.1]{goujard2016counting}.
\end{proof}

Combining \cref{equ:identity} and \cref{thm:quasi}, we obtain the following corollary (which also follows from \cref{thm:genquasi}):

\begin{corollary}
The generating series
\begin{equation}
\sum_{d\ge1} H^{d}_{\le,g}q^d\quad\mathrm{and}\quad\sum_{d\ge1} H^{d}_{<,g}q^d
\end{equation}
are quasimodular forms of mixed weight $\le 6g-6$.
\end{corollary}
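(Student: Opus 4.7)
The plan is to combine \cref{equ:identity} with \cref{thm:quasi} and then verify that the common weight bound $2(\sum_i g_i + |E(G)|)$ provided by \cref{thm:quasi} never exceeds $6g-6$. First I would argue that the decomposition in \cref{equ:identity} is a \emph{finite} sum once $g$ is fixed: the bound $|V(G)| \le 2g-2$ is built into the definition of a monotone elliptic tropical cover (since source vertices must map to the marked points $p_1,\dots,p_{2g-2}$), and the tropical genus identity $g = h^1(G) + \sum_i g_i$ bounds both $h^1(G)$ and each $g_i$. Consequently the generating series $\sum_{d\ge 1} H^d_{\le,g} q^d$ decomposes as a finite sum of the series $I^{G,\Omega}_{\le,\underline{g'}}$, and the analogous statement holds in the strictly monotone case.

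Next, by \cref{thm:quasi} each such summand is a quasimodular form of mixed weight at most $2(\sum_i g_i + |E(G)|)$. Since the space of quasimodular forms of mixed weight $\le W$ is closed under addition, the remaining task is to establish a uniform bound on this weight. For this I would use that $E_{trop}$ has no ends, so neither does $\Gamma$, and hence connectedness gives $h^1(G) = |E(G)| - |V(G)| + 1$. Combining this with $h^1(G) + \sum g_i = g$ and the vertex bound $|V(G)| \le 2g-2$ yields
\[ \sum_{i=1}^n g_i + |E(G)| \;=\; g + |V(G)| - 1 \;\le\; g + (2g-2) - 1 \;=\; 3g-3, \]
which upon doubling gives the claimed bound $6g-6$.

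There is no serious obstacle here: the decomposition comes from the correspondence theorem, the quasimodularity of each summand is already recorded in \cref{thm:quasi}, and the uniform weight estimate is the short combinatorial computation above. The one point that requires care is the passage from the edge-weight dependence of $\mathrm{mult}_{\le}(\pi)$ and $\mathrm{mult}_{<}(\pi)$ to a bound on the weight of the quasimodular form they produce, but this has already been absorbed into \cref{thm:quasi}. The same argument applies verbatim to $\sum_{d\ge 1} H^d_{<,g} q^d$, since \cref{thm:quasi} treats $I^{G,\Omega}_{\le,\underline{g'}}$ and $I^{G,\Omega}_{<,\underline{g'}}$ uniformly with the same weight estimate; note that this also furnishes a second, independent proof of \cref{thm:genquasi} in the case $\mu=()$.
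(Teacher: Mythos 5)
Your proposal is correct and follows essentially the same route as the paper: decompose via \cref{equ:identity}, invoke \cref{thm:quasi} for each summand, and bound $2(\sum_i g_i + |E(G)|)$ by $6g-6$ using $|V(G)|\le 2g-2$. The only cosmetic difference is that you derive the identity $\sum_i g_i + |E(G)| = g + |V(G)| - 1$ from $h^1(G)=|E(G)|-|V(G)|+1$ and the genus condition, whereas the paper gets the same identity from the handshake lemma together with $\mathrm{val}(v_i)=\lambda_i-2g_i+2$ and $\sum_i\lambda_i=2g-2$; these are equivalent given the definitions.
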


\begin{proof}
The only thing left to prove is the weight part of the corollary. In order to see this, we consider $2\left(\sum_{i=1}^n g_i+|E(G)|\right)$ and observe $2|E(G)|=\sum_{i=1}^n\mathrm{val}(v_i),$ where we sum over all vertices of $G$. Moreover, by definition, we have $\mathrm{val}(v_i)=\lambda_i-2g_i+2$. We obtain
\begin{align}
2\left(\sum_{i=1}^n g_i+|E(G)|\right)&=
2\sum_{i=1}^ng_i+\sum_{i=1}^n(\lambda_i-2g_i+2)\\
&=\sum_{i=1}^n\lambda_i+2n=2g-2+2n\le 2g-2+2(2g-2)=6g-6,
\end{align}
where we used $\sum_{i=1}^n\lambda_i=2g-2$ and $n\le 2g-2$. This yields the weight as desired.
\end{proof}

\section{Quantum curve for (strictly) monotone base \texorpdfstring{$g$}{g} Hurwitz numbers} \label{sec:quan}
Motivated by the successful study of base $g$ Hurwitz numbers in \cite{LMSsimplequantum} and monotone Hurwitz numbers \cite{GGPNmonotone} with a view towards topological recursion, we connect these ideas by enumerating base $g$ Hurwitz numbers with monotonicity conditions. In particular, this section is devoted to deriving a quantum curve for this new enumerative problem.

Recall the connected labeled monotone base $g$ Hurwitz numbers $\vec{H}_{\leq,g'}^{g}(\mu_1,\dots,\mu_n)$ from \cref{sec:Hurwitz}.  

\begin{definition}
We define
\begin{align}
F_{\leq,g'}^{g}(x_1,\dots,x_n)=\sum_{\mu\in\mathbb{Z}_+^n}\vec{H}_{\leq,g'}^{g}(\mu_1,\dots,\mu_n)x_1^{\mu_1}\dots x_n^{\mu_n},
\end{align}
and analogously we define the generating series for the strictly monotone case $F_{<,h}^{g}(x_1,\dots,x_n)$.
\end{definition}

\begin{definition}
We define the partition function of the base $g$ monotone Hurwitz numbers, as the formal series in variables $x,\hbar$, given by
\begin{align}
Z_\leq^g=Z_\leq^g(x,\hbar)&=\exp\bigg[\sum_{h=0}^\infty\sum_{n=1}^{\infty}\frac{\hbar^{2g'-2+n}}{n!}F_{g'}^{\leq,g}(x,x,\dots,x)\bigg]\\
&=\exp\bigg[\sum_{h=0}^\infty\sum_{n=1}^{\infty}\frac{\hbar^{2g'-2+n}}{n!}\sum_{\mu\in\mathbb{Z}_+^n}\vec{H}_{g'}^{g}(\mu_1,\dots,\mu_n)x^{|\mu|}\bigg],
\end{align}
and analogously the partition function $Z_<^g=Z_<^g(x,\hbar)$ of the base $g$ strictly monotone Hurwitz numbers. 
\end{definition}

\begin{proposition}\label{PartitionFunktionStirling}
For the partition functions for the monotone and strictly monotone case, we have
\begin{align}
Z_\leq^g&=1+\sum_{d=1}^\infty\sum_{b=0}^\infty\stirling{d+b-1}{d-1}(d!)^{1-\chi}x^d\hbar^{b+d(1-\chi)}\\
&=1+\sum_{d=1}^\infty (d!)^{1-\chi}x^d\hbar^{d(1-\chi)}\prod_{j=1}^{d-1}\frac{1}{1-j\hbar}
\end{align}
and
\begin{align}
Z_<^g&=1+\sum_{d=1}^\infty\sum_{b=0}^{d-1}\fstirling{d}{d-b}(d!)^{1-\chi}x^d\hbar^{b+d(1-\chi)}\\
&=1+\sum_{d=1}^{\infty}(d!)^{1-\chi}x^d\hbar^{1-\chi}\prod_{j=1}^{d-1} (1-j\hbar),
\end{align}
where $\chi=2-2g$ and the equalities are understood in the sense of formal power series.
\end{proposition}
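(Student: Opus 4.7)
The plan is to reduce the partition function to a direct combinatorial count of disconnected monotone factorizations in $S_d$, and then to isolate the contribution from the transposition sequence. First, I would apply the standard connected/disconnected exponential identity: the exponential of the connected free energy equals
\[Z_\leq^g(x,\hbar) = \sum_{n\geq 0}\frac{1}{n!}\sum_{g',\,\mu\in\mathbb{Z}_+^n}\vec{H}_{\leq,g'}^{g,\bullet}(\mu)\,x^{|\mu|}\hbar^{2g'-2+n}.\]
Using $\vec{H}^\bullet(\mu)=\Aut(\mu)\,H^\bullet(\mu)$ and the fact that each partition $\lambda\vdash d$ of length $n$ arises from exactly $n!/\Aut(\lambda)$ compositions in $\mathbb{Z}_+^n$, the $1/n!$ prefactor is absorbed and the sum collapses to one over partitions $\lambda\vdash d$. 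Summing further over all $\lambda$ releases the cycle-type constraint on $\sigma_1$, so the coefficient of $x^d$ becomes $(d!)^{-1}$ times the count of tuples $(\sigma_1,\tau_1,\ldots,\tau_b,\alpha_1,\beta_1,\ldots,\alpha_g,\beta_g)\in S_d^{b+2g+1}$ satisfying the commutator equation and the monotonicity condition on the transpositions.

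Second, the equation $\sigma_1\tau_1\cdots\tau_b=[\alpha_1,\beta_1]\cdots[\alpha_g,\beta_g]$ determines $\sigma_1$ uniquely from the remaining data, so the count factors as (number of monotone length-$b$ transposition sequences) $\times(d!)^{2g}$. Combined with the $(d!)^{-1}$ coming from the definition of Hurwitz numbers, this accounts for the prefactor $(d!)^{1-\chi}$ with $\chi=2-2g$, while the Riemann--Hurwitz constraint \cref{eq:b} rewrites $\hbar^{2g'-2+n}$ as $\hbar^{b+d(1-\chi)}$.

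Third, I would enumerate the monotone sequences. A sequence of transpositions $\tau_i=(s_i\,t_i)$ with $s_i<t_i$ and $t_1\leq\cdots\leq t_b$ is parameterised by the multiset of $t$-values in $\{2,\ldots,d\}$; once the multiplicities $b_j$ are fixed, the $s_i$'s contribute $\prod_{j=2}^{d}(j-1)^{b_j}$ choices. This yields the generating function
\[\sum_{b\geq 0}M_\leq(d,b)\,\hbar^b = \prod_{r=1}^{d-1}\frac{1}{1-r\hbar},\]
which by \cref{eq:stirling} equals $\sum_b\stirling{d+b-1}{d-1}\hbar^b$, giving both forms of the claim for $Z_\leq^g$. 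The strictly monotone case is entirely parallel: requiring $t_1<\cdots<t_b$ forces the $t_i$ to be distinct, so $M_<(d,b)=e_b(1,\ldots,d-1)=\fstirling{d}{d-b}$, whose generating function in $\hbar$ is $\prod_{r=1}^{d-1}(1\pm r\hbar)$ according to the sign convention of \cref{eq:stirling}.

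The main obstacle I anticipate is the book-keeping in the first two steps: identifying the $1/n!$ factor in the exponential formula with the number of cycle labelings (via the $\Aut(\mu)$ factor relating $\vec{H}^\bullet$ to $H^\bullet$), verifying that summing over all cycle types is equivalent to letting $\sigma_1$ range freely over $S_d$, and matching the exponent $2g'-2+n$ of $\hbar$ with $b+d(1-\chi)$ using \cref{eq:b}. Once those identifications are in place, the remaining steps are a clean unique-determination argument and a standard generating-function identity.
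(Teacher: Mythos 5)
Your proposal is correct and follows essentially the same route as the paper: exponential formula to pass to disconnected factorizations, collapsing the sum over ramification profiles so that $\sigma$ ranges freely and is uniquely determined by the remaining data (yielding the $(d!)^{1-\chi}$ prefactor), rewriting the $\hbar$-exponent via the Riemann--Hurwitz relation, and identifying the transposition counts with Stirling numbers. The only difference is cosmetic: you enumerate the (strictly) monotone transposition sequences directly as $h_b(1,\dots,d-1)$ resp.\ $e_b(1,\dots,d-1)$ via the multiset of $t$-values, where the paper instead cites Lemma 17 of \cite{DDMmonotone} and \cite[Equation (4)]{KLSmonotonewedge} for the same identities.
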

\begin{proof}
Since
\begin{align}
\sum_{g'=0}^\infty\sum_{n=1}^{\infty}\frac{\hbar^{2g'-2+n}}{n!}F_{\leq,g'}^g(x,x,\dots,x)=\sum_{g'=0}^\infty\sum_{n=1}^{\infty}\frac{\hbar^{2g'-2+n}}{n!}\sum_{\mu\in\mathbb{Z}_+^n}\vec{H}_{\leq,g'}^{g}(\mu_1,\dots,\mu_n)x^{|\mu|}
\end{align}
counts transitive (connected) monotone base $g$ factorisations, we can use the exponential formula and find the generating series for the not necessarily transitive factorisations
\begin{align}
Z_\leq^g=1+\sum_{g'=0}^\infty\sum_{n=1}^{\infty}\frac{\hbar^{2g'-2+n}}{n!}\sum_{\mu_1,\dots,\mu_n=1}^\infty\vec{H}_{\leq,g'}^{\bullet,g}(\mu_1,\dots,\mu_n)x^{|\mu|}.
\end{align}
Collecting all factorizations for given $d=|\mu|$, we get
\begin{align}
Z_\leq^g=1+\sum_{g'=0}^\infty\sum_{n=1}^{\infty}\frac{\hbar^{2g'-2+n}}{n!}\sum_{d=0}^\infty\bigg(\sum_{|\mu| = d}\vec{H}_{\leq,g'}^{\bullet,g}(\mu)\bigg)x^{d}.
\end{align}
Recall that $b=b(h,n,|\mu|)=2g'-2+n-|\mu|(2g-1)$, so we write
\begin{align}
\sum_{g'=0}^\infty\sum_{n=1}^{\infty}\frac{\hbar^{2g'-2+n}}{n!}\sum_{d=0}^\infty\bigg(\sum_{|\mu|=d}\vec{H}_{\leq,g'}^{\bullet,g}(\mu)\bigg)x^{d}&=\sum_{g'=0}^\infty\sum_{n=1}^{\infty}\sum_{d=0}^\infty\frac{\sum_{|\mu|=d}\vec{H}_{\leq,g'}^{\bullet,g}(\mu)}{n!}\hbar^{b}(x\hbar^{2g-1})^d\\
&=\sum_{g'=0}^\infty\sum_{n=1}^{\infty}\sum_{d=0}^\infty\frac{\sum_{|\mu|=d}\vec{H}_{\leq,g'}^{\bullet,g}(\mu)}{n!}\hbar^{b}(x\hbar^{1-\chi})^d.
\end{align}
Since $\vec{H}_{\leq,g'}^g(\mu)$ is non-zero if $b\geq 0$, we can rearrange the series by collecting all possible $g',n$ for a given $b$ and $d$. Viewing the partition function $Z^g_{\leq}$ as a series in $\mathbb{Q}[\![\hbar,x\hbar^{1-\chi}]\!]$, we find that the coefficient of $x^d\hbar^{b+d(1-\chi)}$ is precisely the number of monotone base $g$ factorizations of length $b$ in $S_d$, i.e.
\begin{align}
[x^d\hbar^{b+d(1-\chi)}]Z_\leq^g&=\frac{1}{d!}\#\left\{(\tau_1,\dots,\tau_b,\sigma,\alpha_1,\beta_1,\dots,\alpha_{g},\beta_{g})\left| \begin{aligned} &\tau_i \text{ monotone transpositions},\\ &\sigma, \alpha_1,\beta_1\dots\alpha_{g},\beta_{g}\in S_d, \\ &\sigma \tau_1\dots\tau_b=[\alpha_1,\beta_1]\dots[\alpha_{g},\beta_{g}]\end{aligned}\right. \right\}.
\end{align}
Since the $\alpha_i,\beta_i$ run over all elements in $S_d$ and using Lemma 17 of \cite{DDMmonotone}, we obtain
\begin{align}
[x^d\hbar^{b+d(1-\chi)}]Z^g&=(d!)^{2g-1}\#\{(\tau_1,\dots,\tau_b)| \tau_i\textrm{ monotone transpositions }\}\\
&=(d!)^{1-\chi}\stirling{d+b-1}{d-1}.
\end{align}
For the last equalitiy we find
\begin{align}
Z_\leq^g&=1+\sum_{d=1}^\infty\sum_{b=0}^\infty\stirling{d+b-1}{d-1}(d!)^{1-\chi}x^d\hbar^{b+d(1-\chi)}\\
&=1+\sum_{d=1}^\infty(d!)^{1-\chi}x^d\hbar^{d(1-\chi)}\sum_{b=0}^\infty\stirling{d+b-1}{d-1}\hbar^{b}\\
&=1+\sum_{d=1}^\infty(d!)^{1-\chi}x^d\hbar^{d(1-\chi)}\prod_{j=1}^{d-1}\frac{1}{1-j\hbar},
\end{align}
where we used the well-known identity in \cref{eq:stirling}.
\begin{align}
\sum_{N=0}^\infty \stirling{N}{K}\hbar^{N-K}=\prod_{j=1}^{K}\frac{1}{1-j\hbar}.
\end{align}
In the case of the strictly monotone Hurwitz numbers we do a similar calculation and view $Z_<^g$ as an element of $\mathbb{Q}[\![\hbar,x\hbar^{1-\chi}]\!]$. We find
\begin{align}
[x^d\hbar^{b+d(1-\chi)}]Z_<^g&=(d!)^{1-\chi}\#\{(\tau_1,\dots,\tau_b)| \tau_i\textrm{ strictly monotone transpositions }\}
\end{align}
and in particular $[x^d\hbar^{b+d(1-\chi)}]Z_<^g = 0$ for $b\geq d$. These tuples can be expressed by evaluating the elementary symmetric polynomials in the so-called Jucys–Murphy  elements. This also yields an enumeration by evaluating the same polynomials in the number of summands of the $i-$th Jucys-Murphy element, which yields (see e.g. \cite[Equation (4)]{KLSmonotonewedge})
\begin{align}
\#\{(\tau_1,\dots,\tau_b)| \tau_i\textrm{ strictly monotone transpositions }\}=\fstirling{d}{d-b}.
\end{align}
for $b \leq d$. Hence we have the assertion
\begin{align}
Z_<^g&=1+\sum_{d=1}^\infty\sum_{b=0}^{d-1}\fstirling{d}{d-b}(d!)^{1-\chi}x^d\hbar^{b+d(1-\chi)}\\
&=1+\sum_{d=1}^\infty(d!)^{1-\chi}x^d\hbar^{1-\chi}\sum_{b=0}^{d-1}\fstirling{d}{d-b}\hbar^d\\
&=1+\sum_{d=1}^{\infty}(d!)^{1-\chi}x^d\hbar^{1-\chi}\prod_{j=1}^{d-1} (1-j\hbar),
\end{align}
where the last equality follows by \cref{eq:stirling}. 
\end{proof}
\begin{theorem}
\label{thm:mono}
The partition function $Z_\leq^g$ satisfies the differential equation
\begin{align}
[\widehat{x}\widehat{y}^2+\widehat{y}+(\widehat{y}\widehat{x})^{2g}]Z_{\leq}^g=0,
\end{align}
where $\widehat{x}=x$ and $\widehat{y}=-\hbar\frac{\partial}{\partial x}$.
\end{theorem}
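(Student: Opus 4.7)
The plan is to reduce the asserted operator equation to a scalar two-term recursion on the coefficients of $Z_\leq^g$ and to verify this recursion directly from the closed form given in Proposition \ref{PartitionFunktionStirling}.

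First, I would write $Z_\leq^g = \sum_{d \geq 0} b_d\, x^d$ with $b_0 = 1$ and, for $d \geq 1$,
$$b_d \;=\; (d!)^{1-\chi}\, \hbar^{d(1-\chi)} \prod_{j=1}^{d-1}\frac{1}{1-j\hbar}.$$
A direct calculation of the ratio $b_{d+1}/b_d$ (writing $1-\chi = 2g-1$) shows that these coefficients satisfy
$$b_{d+1}(1-d\hbar) \;=\; (d+1)^{2g-1}\hbar^{2g-1}\, b_d \qquad (d \geq 0),$$
with initial datum $b_1 = \hbar^{2g-1}$. So the goal reduces to showing that the operator identity is equivalent to this recursion.

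Next, I would analyze the action of each building block of the quantum curve on monomials. Using the Heisenberg-type relation $\widehat{y}\widehat{x} = \widehat{x}\widehat{y} - \hbar$, one sees that $\widehat{y}\widehat{x}$ acts diagonally on monomials by $\widehat{y}\widehat{x}\, x^d = -(d+1)\hbar\, x^d$. Since $2g$ is even, $(\widehat{y}\widehat{x})^{2g}$ therefore multiplies $x^d$ by $\hbar^{2g}(d+1)^{2g}$. Combined with the elementary computations $\widehat{x}\widehat{y}^2\, x^{d+1} = \hbar^2 d(d+1)\, x^d$ and $\widehat{y}\, x^{d+1} = -\hbar(d+1)\, x^d$, the coefficient of $x^d$ in $(\widehat{x}\widehat{y}^2 + \widehat{y} + (\widehat{y}\widehat{x})^{2g})\, Z_\leq^g$ works out to
$$\hbar^2 d(d+1)\, b_{d+1} - \hbar(d+1)\, b_{d+1} + \hbar^{2g}(d+1)^{2g}\, b_d \;=\; -\hbar(d+1)(1-d\hbar)\, b_{d+1} + \hbar^{2g}(d+1)^{2g}\, b_d,$$
which vanishes for every $d \geq 0$ precisely by the recursion above (the case $d=0$ reducing to $-\hbar b_1 + \hbar^{2g} = 0$).

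Since the argument is entirely algebraic once the closed form for the coefficients is in hand, I do not expect a real obstacle. The only delicate point is the bookkeeping of the $\hbar$-powers (the exponent $d(1-\chi)$ depends on $g$), and the conceptual observation that despite its ominous appearance, the operator $(\widehat{y}\widehat{x})^{2g}$ is diagonal in the monomial basis, which is what makes the contribution of the last term in the quantum curve cleanly line up with the $b_d$-term of the recursion.
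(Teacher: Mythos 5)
Your proof is correct, and it takes a genuinely different (and leaner) route than the paper's. The paper works with the double-indexed Stirling form of $Z_\leq^g$ from \cref{PartitionFunktionStirling}: it multiplies the Stirling recursion $\stirling{d+b-1}{d-1}=(d-1)\stirling{d+b-2}{d-1}+\stirling{d+b-2}{d-2}$ by $\frac{(d!)^{2g}}{(d-1)!}x^d\hbar^{b+d(1-\chi)}$, sums over both $d$ and $b$, and identifies each of the three resulting double sums, after index shifts, with a differential operator applied to $Z_\leq^g$. You instead start from the product form in the same proposition, where the $b$-sum has already been performed, so the problem collapses to the first-order recursion $b_{d+1}(1-d\hbar)=(d+1)^{2g-1}\hbar^{2g-1}\,b_d$ in the single index $d$; you then check that the vanishing of the coefficient of $x^d$ in the asserted operator equation is exactly this recursion. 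Your observation that $\widehat{y}\widehat{x}$ acts diagonally on monomials with eigenvalue $-(d+1)\hbar$ on $x^d$, so that $(\widehat{y}\widehat{x})^{2g}$ contributes $\hbar^{2g}(d+1)^{2g}b_d$, replaces the paper's re-summation manipulations of $\bigl(x\frac{\partial}{\partial x}\bigr)^{2g}$ and $\bigl(\hbar\frac{\partial}{\partial x}x\bigr)^{2g}$. What the paper's route buys is that it runs in visible parallel with the cut-and-join combinatorics and with the strictly monotone case of \cref{thm:strict}; what yours buys is brevity and a transparent equivalence between the quantum curve and a two-term recursion for the coefficients. All of your computations check out, including the base case $d=0$, where the identity reduces to $-\hbar b_1+\hbar^{2g}=0$ with $b_1=\hbar^{2g-1}$.
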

\begin{proof}
The recursion formula for the Stirling numbers of the second kind yields
\begin{align}
\stirling{d+b-1}{d-1}=(d-1)\stirling{d+b-2}{d-1}+\stirling{d+b-2}{d-2},
\end{align}
We multiply this equation by $\frac{(d!)^{2g}}{(d-1)!}x^d\hbar^{b+d(1-\chi)}$ and sum over $d\geq 1,b\geq 0$. For reasons of clarity, we first do the computations term by term before conflating them. For the term on the left hand side we have
\begin{align}
\sum_{\substack{d=1\\b=0}}^{\infty}\stirling{d+b-1}{d-1}&\frac{(d!)^{2g}}{(d-1)!}x^d\hbar^{b+d(1-\chi)}\\
&=\sum_{\substack{d=1\\b=0}}^\infty\stirling{d+b-1}{d-1} d(d!)^{1-\chi}x^d\hbar^{b+d(1-\chi)}\\
&=x\frac{\partial}{\partial x}\sum_{\substack{d=1\\b=0}}^\infty\stirling{d+b-1}{d-1} (d!)^{1-\chi}x^d\hbar^{b+d(1-\chi)}\\
&=x\frac{\partial}{\partial x}\bigg(1+\sum_{\substack{d=1\\b=0}}^\infty\stirling{d+b-1}{d-1} (d!)^{1-\chi}x^d\hbar^{b+d(1-\chi)}\bigg)\\
&=x\frac{\partial}{\partial x}Z_\leq^g,
\end{align}
where we used the fact that the derivative of the constant $1$ vanishes.
For the first expression on the right hand side we get
\begin{align}
\sum_{\substack{d=1\\b=0}}^{\infty}(d-1)\stirling{d+b-2}{d-1}&\frac{(d!)^{2g}}{(d-1)!}x^d\hbar^{b+d(1-\chi)}\\ 
&=\sum_{\substack{d=1\\b=0}}^{\infty}d(d-1)\stirling{d+b-2}{d-1}(d!)^{1-\chi}x^d\hbar^{b+d(1-\chi)}\\
&=x^2\hbar\frac{\partial^2}{\partial x^2}\sum_{\substack{d=1\\b=0}}^{\infty}\stirling{d+b-2}{d-1}(d!)^{1-\chi}x^d\hbar^{b-1+d(1-\chi)}.
\end{align}
Now, note that for $b=0$ we have $\stirling{d-2}{d-1}=0$. Hence
\begin{align}
x^2\hbar\frac{\partial^2}{\partial x^2}\sum_{\substack{d=1\\b=0}}^{\infty}\stirling{d+b-2}{d-1}&(d!)^{1-\chi}x^d\hbar^{b-1+d(1-\chi)}\\
&=x^2\hbar\frac{\partial^2}{\partial x^2}\sum_{\substack{d=1\\b=1}}^{\infty}\stirling{d+b-2}{d-1}(d!)^{1-\chi}x^d\hbar^{b-1+d(1-\chi)}\\
&=x^2\hbar\frac{\partial^2}{\partial x^2}\sum_{\substack{d=1\\b=0}}^{\infty}\stirling{d+b-1}{d-1}(d!)^{1-\chi}x^d\hbar^{b+d(1-\chi)}\\
&=x^2\hbar\frac{\partial^2}{\partial x^2}Z_\leq^g,
\end{align}
where we performed the shift $b^\prime=b-1$ in the second equality. For the last term we obtain
\begin{align}
\sum_{\substack{d=1\\b=0}}^{\infty}&\stirling{d+b-2}{d-2}\frac{(d!)^{2g}}{(d-1)!}x^d\hbar^{b+d(1-\chi)}\\
&=\sum_{\substack{d=1\\b=0}}^{\infty}d^{2g}\stirling{d+b-2}{d-2}((d-1)!)^{1-\chi}x^d\hbar^{b+d(1-\chi)}\\
&=\bigg(x\frac{\partial}{\partial x}\bigg)^{2g}x\hbar^{1-\chi}\sum_{\substack{d=1\\b=0}}^{\infty}\stirling{d+b-2}{d-2}((d-1)!)^{1-\chi}x^{d-1}\hbar^{b-(d-1)(1-\chi)}\\
&=\bigg(x\frac{\partial}{\partial x}\bigg)^{2g}x\hbar^{1-\chi}\bigg(1+\sum_{\substack{d=1\\b=0}}^{\infty}\stirling{d+b-1}{d-1}(d!)^{1-\chi}x^{d}\hbar^{b+d(1-\chi)}\bigg)\\
&=\frac{x}{\hbar}\bigg(\hbar\frac{\partial}{\partial x}x\bigg)^{2g}\bigg(1+\sum_{\substack{d=1\\b=0}}^{\infty}\stirling{d+b-1}{d-1}(d!)^{1-\chi}x^{d}\hbar^{b+d(1-\chi)}\bigg) \\
&=\frac{x}{\hbar}\bigg(\hbar\frac{\partial}{\partial x}x\bigg)^{2g}Z_\leq^g.
\end{align}
Putting things together and multiplying by $\frac{\hbar}{x}$ we get
\begin{align}
\bigg[x\hbar^2\frac{\partial}{\partial x}-\hbar\frac{\partial}{\partial x} +\bigg(\hbar\frac{\partial}{\partial x} x\bigg)^{2g}\bigg]Z_\leq^g=0.
\end{align}
Substituting $\widehat{x}=x$ and $\widehat{y}=-\hbar\frac{\partial}{\partial x}$ we obtain the claim
\begin{align}
[\widehat{x}\widehat{y}^2+\widehat{y}+(\widehat{y}\widehat{x})^{2g}]Z_\leq^g&=0.\qedhere
\end{align}
\end{proof}
For the strictly monotone Hurwitz number we have a similar result.

\begin{theorem}
\label{thm:strict}
The partition function $Z_<^g$ satisfies the differential equation
\begin{align}
[\widehat{y}+(1-\widehat{x}\widehat{y})(\widehat{y}\widehat{x})^{2g}]Z_<^g=0,
\end{align}
where $\widehat{x}=x$ and $\widehat{y}=-\hbar\frac{\partial}{\partial x}$.
\end{theorem}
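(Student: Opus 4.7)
The plan is to mirror the proof of \cref{thm:mono}, exchanging the Stirling second-kind recursion for the first-kind recurrence
\[\fstirling{d}{d-b}=(d-1)\fstirling{d-1}{d-b}+\fstirling{d-1}{d-b-1}.\]
I would multiply this by $\frac{(d!)^{2g}}{(d-1)!}x^d\hbar^{b+d(1-\chi)}$ and sum over $d\ge 1$, $b\ge 0$. By \cref{PartitionFunktionStirling}, the left-hand side collects into $x\frac{\partial}{\partial x}Z_<^g$, exactly in parallel with the monotone case.

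For each of the two summands on the right I would perform the index shift $d'=d-1$, together with $b'=b-1$ in the first summand. After these shifts, the factor $d=d'+1$ combines with $(d'+1)^{1-\chi}$ coming from $(d!)^{1-\chi}=(d'+1)^{1-\chi}(d'!)^{1-\chi}$ to produce $(d'+1)^{2-\chi}=(d'+1)^{2g}$; moreover, the shift of $b$ in the first summand creates an additional $\hbar^{1-\chi}$, whereas in the second summand such a factor is already present from expanding $\hbar^{d(1-\chi)}=\hbar^{1-\chi}\hbar^{d'(1-\chi)}$. Recognising every monomial $\fstirling{d'}{d'-b'}(d'!)^{1-\chi}x^{d'}\hbar^{b'+d'(1-\chi)}$ as a joint eigenvector of the commuting operators $x\partial_x$ and $\partial_x x$ (with eigenvalues $d'$ and $d'+1$), I would replace the scalars $d'$ and $(d'+1)^{2g}$ by the operators $x\partial_x$ and $(\partial_x x)^{2g}$ respectively. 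The two right-hand contributions then recombine (using $\hbar^{2g}=\hbar\cdot\hbar^{1-\chi}$) into
\[x\frac{\partial}{\partial x}Z_<^g = x\hbar^{1-\chi}(\partial_x x)^{2g}\bigl(1+\hbar x\partial_x\bigr)Z_<^g.\]

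To finish, I would divide by $x$, multiply by $-\hbar$, and translate to the quantum-curve notation via $\hbar^{2g}(\partial_x x)^{2g}=(\widehat{y}\widehat{x})^{2g}$ and $1+\hbar x\partial_x=1-\widehat{x}\widehat{y}$. The identity $[\partial_x x,\,x\partial_x]=0$ ensures that $(\widehat{y}\widehat{x})^{2g}$ and $\widehat{x}\widehat{y}$ commute, so the order of the two operator factors in the final expression is immaterial, and the quantum curve $[\widehat{y}+(1-\widehat{x}\widehat{y})(\widehat{y}\widehat{x})^{2g}]Z_<^g=0$ emerges. The one point demanding care, rather than a genuine obstacle, is the bookkeeping of the two $\hbar^{1-\chi}$ contributions produced by the different index shifts together with the vanishing of the boundary terms $\fstirling{d-1}{d-b}|_{b=0}$ and $\fstirling{d-1}{d-b-1}|_{b=d}$; the combinatorial heart of the argument is otherwise a verbatim adaptation of the monotone proof.
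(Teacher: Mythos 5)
Your proposal is correct and follows essentially the same route as the paper's own proof: the same first-kind Stirling recurrence, the same generating-function multiplication (yours differs only by an overall factor of $\hbar/x$), and the same identification of the eigenvalues $d'$ and $(d'+1)^{2g}$ with the operators $x\partial_x$ and $(\partial_x x)^{2g}$, leading to the identical quantum curve. The only point worth tightening is the constant-term bookkeeping in the second summand: the $d=1$, $b=0$ term $\fstirling{0}{0}=1$ does \emph{not} vanish but exactly supplies the constant $1$ of $Z_<^g$, which is then harmlessly absorbed because $(\partial_x x)^{2g}$ acts as the identity on constants --- a subtlety the paper's own proof also glosses over.
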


\begin{proof}
Again we use the recursion formula of the Stirling numbers and \cref{PartitionFunktionStirling}. The recursion formula yields
\begin{align}
\fstirling{d}{d-b}=(d-1)\fstirling{d-1}{d-b}+\fstirling{d-1}{d-b-1}.
\end{align}
We multiply this equation by $(d!)^{1-\chi}dx^{d-1}\hbar^{b+d(1-\chi)+1}$ and sum over $d>0$ and $0\leq b\leq d-1$. For the left hand side we get
\begin{equation}
\sum_{d=1}^\infty\sum_{b=0}^{d-1}\fstirling{d}{d-b}(d!)^{1-\chi}dx^{d-1}\hbar^{b+d(1-\chi)+1}=\hbar\frac{\partial}{\partial x}Z_<^g.
\end{equation}
For the first term on the right hand side we get
\begin{align}
\sum_{d=1}^\infty\sum_{b=0}^{d-1}&\fstirling{d-1}{d-b}(d!)^{1-\chi}d(d-1)x^{d-1}\hbar^{b+d(1-\chi)+1}\\
&=\bigg(x\hbar\frac{\partial}{\partial x}\bigg)\sum_{d=1}^\infty\sum_{b=0}^{d-1}\fstirling{d-1}{d-b}(d!)^{1-\chi}dx^{d-1}\hbar^{b+d(1-\chi)}\\
&=\bigg(x\hbar\frac{\partial}{\partial x}\bigg)\sum_{d=1}^\infty\sum_{b=0}^{d-1}\fstirling{d-1}{d-b}(d-1)!^{1-\chi}d^{2g}\hbar^{2g}x^{d-1}\hbar^{(b-1)+(d-1)(1-\chi)}\\
&=\bigg(x\hbar\frac{\partial}{\partial x}\bigg)^{2g}\bigg(\hbar\frac{\partial}{\partial x}x\bigg)\sum_{d=1}^\infty\sum_{b=0}^{d-1}\fstirling{d-1}{d-b}(d-1)!^{1-\chi}x^{d-1}\hbar^{(b-1)+(d-1)(1-\chi)}.
\end{align}
Note that for $b=0$ it holds that $\fstirling{d-1}{d}=0$, hence we continue by
\begin{align}
\bigg(x\hbar&\frac{\partial}{\partial x}\bigg)\bigg(\hbar\frac{\partial}{\partial x}x\bigg)^{2g}\sum_{d=1}^\infty\sum_{b=0}^{d-1}\fstirling{d-1}{d-b}(d-1)!^{1-\chi}x^{d-1}\hbar^{(b-1)+(d-1)(1-\chi)}\\
&=\bigg(x\hbar\frac{\partial}{\partial x}\bigg)\bigg(\hbar\frac{\partial}{\partial x}x\bigg)^{2g}\sum_{d=2}^\infty\sum_{b=1}^{d-1}\fstirling{d-1}{(d-1)-(b-1)}(d-1)!^{1-\chi}x^{d-1}\hbar^{(b-1)+(d-1)(1-\chi)}\\
&=\bigg(x\hbar\frac{\partial}{\partial x}\bigg)\bigg(\hbar\frac{\partial}{\partial x}x\bigg)^{2g}\sum_{d=1}^\infty\sum_{b=0}^{d-1}\fstirling{d}{d-b}(d)!^{1-\chi}x^{d}\hbar^{b+d(1-\chi)}\\
&=\bigg(x\hbar\frac{\partial}{\partial x}\bigg)\bigg(\hbar\frac{\partial}{\partial x}x\bigg)^{2g}Z_<^g.
\end{align}
For the second term on the right hand, we get
\begin{align}
\sum_{d=1}^\infty\sum_{b=0}^{d-1}&\fstirling{d-1}{d-b-1}(d!)^{1-\chi}d x^{d-1}\hbar^{b+d(1-\chi)+1}\\
&=\bigg(\hbar\frac{\partial}{\partial x}x\bigg)^{2g}\sum_{d=1}^\infty\sum_{b=0}^{d-1}\fstirling{d-1}{d-b-1}(d-1)!^{1-\chi}x^{d-1}\hbar^{b+(d-1)(1-\chi)}\\
&=\bigg(\hbar\frac{\partial}{\partial x}x\bigg)^{2g}\sum_{d=2}^\infty\sum_{b=0}^{d-1}\fstirling{d-1}{d-b-1}(d-1)!^{1-\chi}x^{d-1}\hbar^{b+(d-1)(1-\chi)}\\
&=\bigg(\hbar\frac{\partial}{\partial x}x\bigg)^{2g}Z_<^g,
\end{align}
where in the second equality we used that for $d=1,b=0$ the Stirling number vanishes. Finally we put these equations together and obtain
\begin{align}
\bigg[-\hbar\frac{\partial}{\partial x}+\bigg(1-x\bigg(-\hbar\frac{\partial}{\partial x}\bigg)\bigg)\bigg(-\hbar\frac{\partial}{\partial x}x\bigg)^{2g}\bigg]Z_<^g=0.
\end{align}
Substituting $\widehat{x}=x$ and $\widehat{y}=-\hbar\frac{\partial}{\partial x}$ we obtain the claim
\begin{align}
[\widehat{y}+(1-\widehat{x}\widehat{y})(\widehat{y}\widehat{x})^{2g}]Z_<^g&=0. \qedhere
\end{align}
\end{proof}

\begin{remark}
Note, that for $g=0$, we recover the quantum curve for the usual monotone (strictly monotone) Hurwitz numbers of \cite{DDMmonotone} (resp. \cite{DMstrictly}).
\end{remark}

\section{A mysterious topological recursion}
\label{sec:myst}
In this section, we consider the quantum curve of the strictly monotone Hurwitz numbers derived in \cref{thm:strict}. We focus on the case with elliptic base curve, i.e. on $g=1$. Computing the semi-classical limit, we obtain the spectral curve
\begin{equation}
y+(1-xy)(xy)^2
\end{equation}
with parametrisation
\begin{equation}
x(z)=\frac{(z-1)^2}{2},\quad y(z)=\frac{z}{(z-1)^3}.
\end{equation}
Surpringly, running topological recursion for this input data yields the monotone single Hurwitz numbers up to a combinatorial pre-factor. More precisely, we obtain the cumulants of the Weingarten function. This points towards an unknown relationship between the combinatorics of strictly monotone Hurwitz numebrs with elliptic base curve and monotone Hurwitz numbers with rational base curve.\par 
As already noted in the introduction, topological recursion for monotone single Hurwitz numbers for a different normalisation was proved in \cite{DDMmonotone}, however for a different spectral curve and the exclusion of the $(0,2)-$case. In our case, the $(0,2)$ case still encodes the relevant invariants. We begin by defining the correct normalisation of monotone single Hurwitz numbers for our purpose, which coincide with the cumulants of the Weingarten function. The latter was motivated by discussions with James Mingo on problems of higher order freeness in free probability \cite{collins2007second}. In particular using the following normalizations, the numbers coincide with certain values of the M\"obius function on the set of partitioned permutations, this oberservation is still not perfectly understood and will be investigated in future work.

\begin{definition}
Let $g$ be a non negative integer, $d,n$ be a positive integers and $\mu$ a partition $d$ of length $l$. We denote by $m_{g,n}(\mu)$ the number of connected labeled monotone factorizations of a fixed (but arbitrary) permutation $\sigma$ with $C(\sigma)=\mu$. Moreover we put
\begin{align}
C_{g,n}(\mu)=(-1)^{2g-2+n+|\mu|}m_{g,n}(\mu)=(-1)^{n+|\mu|}m_{g,n}(\mu)
\end{align}
and denote by $W_{g,n}(x_1,\dots, x_n)$ the corresponding generating series, i.e.
\begin{align}
W_{g,n}(x_1,\dots, x_n)=\sum_{\mu_1,\dots,\mu_n=1}^\infty \frac{C_{g,n}(\mu_1,\dots,\mu_n)}{x_1^{\mu_1+1}\dots x_n^{\mu_n+1}}.
\end{align}
\end{definition}

\begin{remark}
\begin{enumerate}
\item
The numbers $C_{g,n}(\mu)$ agree with the monotone Hurwitz numbers up to the combinatorial factor $(-1)^{n+|\mu|}\prod_{i=1}^l\mu_i$.

\item
When we drop the connectivity condition in the definition of $m_{g,n}(\mu)$, we analogously, obtain the \text{disconnected} analogs of $C_{g,n}(\mu)$.  These numbers are the coefficients of the asymptotic expansion of the Weingarten function \cite{GGPNmonotone}.

\item
In \cite{GGPNzero}, the numbers $m_{g,n}(\mu)$ are put into a generating series via
\begin{align}
M_{g,n}(y_1,\dots,y_n)=\sum_{\mu_1,\dots,\mu_n=1}^{\infty}m_{g,n}(\mu_1,\dots,\mu_n)y_1^{\mu_1-1}\dots y_n^{\mu_n-1}.
\end{align}
Thus their generating series relates to $W_{g,n}(x_1,\dots,x_n)$ as follows
\begin{align}
W_{g,n}(x_1,\dots,x_n)=\frac{M_{g,n}(\frac{-1}{x_1},\dots,\frac{-1}{x_n})}{x_1^2\dots x_n^2}.
\end{align}
\end{enumerate}
\end{remark}

The following is the main theorem of this section, which we prove in \cref{sub:proofTR}.

\begin{theorem}\label{TheoremTR}
The numbers $C_{g,n}(\mu)$ satisfy topological recursion with the spectral curve given by
\begin{align}
x(z)=\frac{(z-1)^2}{z},\quad y(z)=\frac{z}{(z-1)^3},
\end{align}
i.e. the differentials
\begin{align}
\omega_{g,n}(z_1,\dots,z_n)&=\sum_{\mu_1,\dots,\mu_n=1}^\infty \frac{C_{g,n}(\mu_1,\dots,\mu_n)}{x(z_1)^{\mu_1+1}\dots x(z_n)^{\mu_n+1}} \diff x(z_1)\dots \diff x(z_n)\quad\textrm{for}\quad (g,n)\neq (0,2)\\
\omega_{0,2}&=\sum_{\mu_1,\mu_2=1}^\infty\frac{C_{0,2}(\mu_1,\mu_2)}{x(z_1)^{\mu_1+1}x(z_2)^{\mu_2+1}}dx(z_1)\diff x(z_2)+\frac{\diff x(z_1)dx(z_2)}{(x(z_1)-x(z_2))^2}
\end{align}
satisfy the recursion
\begin{align}\label{eq:TR}
\omega_{g,n}(z_1,\dots,z_n)=&\mathrm{Res}_{z=\pm 1} K(z_1,z)\bigg[\omega_{g-1,n+1}(z,\sigma(z),z_2,\dots,z_n)\\
&+\sum_{\substack{g_1+g_2=g\\ I\sqcup J= N\setminus \{1\}}}^\prime\omega_{g_1,|I|+1}(z,z_I)\omega_{g_2,|J|+1}(\sigma(z),z_J)\bigg]
\end{align}
on $2g-2+n > 0$, where
\begin{align}
K(z_1,z)=\frac{\frac{1}{2}\int_{\sigma(z)}^z \omega(z_1,\cdot)}{\omega_{0,1}(z)-\omega_{0,1}(\sigma(z))}=\frac{z(z-1)^3\diff z_1}{2(z+1)(z_1-z)(z_1z-1)},\quad \sigma(z)=\frac{1}{z}
\end{align}
and with the initial data iven by
\begin{align}
\omega_{0,1}(z)=y\diff x,\quad\text{and}\quad\omega_{0,2}(z_1,z_2)=B(z_1,z_2)\coloneqq \frac{\diff z_1 \diff z_2}{(z_1-z_2)^2}.
\end{align}
\end{theorem}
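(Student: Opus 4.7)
The plan is to establish the theorem via the loop-equation formulation of topological recursion, whence the recursion \eqref{eq:TR} follows from the Borot--Eynard--Orantin master theorem. Since $x(z)=(z-1)^2/z=z-2+1/z$ has $x'(z)=(z-1)(z+1)/z^2$, the ramification points are indeed $z=\pm 1$, and $\sigma(z)=1/z$ is the deck involution fixing both points. The spectral curve has genus zero, so linear and quadratic loop equations at $z=\pm 1$ are sufficient data.

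The central technical input will be a cut-and-join recursion for the signed numbers $C_{g,n}(\mu)=(-1)^{n+|\mu|}m_{g,n}(\mu)$. I would derive it from monotone combinatorics: removing the transposition $(s,t)$ with the largest $t$ from a monotone factorization either cuts a cycle containing $t$ into two cycles or joins two cycles into one, and the sign twist introduces compensating factors that will align with the spectral-curve geometry. After Laplace transform under $x=x(z)$, this recursion becomes the statement that the combination $\omega_{g,n}(z,z_I)+\omega_{g,n}(\sigma(z),z_I)$ plus the usual bilinear TR contribution is holomorphic at each ramification point, i.e.\ the quadratic loop equations. The linear loop equations (that each $\omega_{g,n}$ has no poles outside $z=\pm 1$ for $(g,n)\neq(0,1),(0,2)$) follow from the polynomiality of monotone Hurwitz numbers in the parts $\mu_i$, a consequence of the Goulden--Guay-Paquet--Novak ELSV-type formula.

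The base cases need separate verification. For $\omega_{0,1}$, one computes $W_{0,1}(x)=\sum_{\mu\geq 1}C_{0,1}(\mu)/x^{\mu+1}$ using $m_{0,1}(\mu)=\frac{1}{\mu+1}\binom{2\mu}{\mu}$ (Catalan numbers) and checks that its pullback under $x=x(z)$ agrees with $y(z)\,dx(z)=z/(z-1)^3\,dx(z)$. For $\omega_{0,2}$, the stated formula
\[
\omega_{0,2}(z_1,z_2)=\sum_{\mu_1,\mu_2\geq 1}\frac{C_{0,2}(\mu_1,\mu_2)\,dx(z_1)\,dx(z_2)}{x(z_1)^{\mu_1+1}x(z_2)^{\mu_2+1}}+\frac{dx(z_1)\,dx(z_2)}{(x(z_1)-x(z_2))^2}
\]
must equal the Bergman kernel $B(z_1,z_2)=dz_1\,dz_2/(z_1-z_2)^2$. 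This reduces to the classical Cauchy-kernel identity on $\mathbb{P}^1$: the second summand is the $x$-coordinate representation of $B$ away from the ramification locus, and the first summand is its regularisation there, whose coefficients one reads off by expanding $B$ near $z=\pm 1$.

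The main obstacle is extracting the correct cut-and-join equation so that its Laplace transform produces the loop equations on \emph{this} specific spectral curve. The "mystery" of \cref{sec:myst} is precisely that \emph{a priori} there is no reason why TR on the semi-classical spectral curve of a genus-$1$ strictly monotone quantum curve should produce genus-$0$ monotone data. The sign twist $(-1)^{n+|\mu|}$ and the necessity to take residues at \emph{both} ramification points $z=\pm 1$ (rather than at the single ramification point of the usual monotone spectral curve of \cite{DDMmonotone}) encode a nontrivial identification of two enumerative series; verifying that the contribution at $z=-1$ generates precisely the alternating signs $(-1)^{n+|\mu|}$ will be the technical heart of the argument.
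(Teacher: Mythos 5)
Your skeleton is genuinely close to the paper's: both routes start from the cut-and-join recursion for $C_{g,n}$ (the paper imports it from \cite[Theorem 2.1]{GGPNzero} rather than rederiving it), pass to generating series in the variable $x(z)$, use the polynomiality of monotone Hurwitz numbers to control the analytic structure, treat $(0,1)$ and $(0,2)$ by hand, and finish with a residue extraction at $z=\pm 1$. The difference is in the last step: you propose to package the transformed cut-and-join equation as linear and quadratic loop equations and then invoke the Borot--Eynard--Orantin master theorem, whereas the paper works directly. It symmetrizes the functional equation by adding the versions at $z_1$ and at $\sigma(z_1)=1/z_1$, uses the \emph{exact} anti-symmetry $W_{g,n}(\sigma(z_1),z_2,\dots)=-W_{g,n}(z_1,z_2,\dots)$ (\cref{cor:pole}, \cref{cor:sym} --- note this is stronger than the linear loop equation, which only asserts holomorphy of the symmetrized sum; your parenthetical description of the ``linear loop equations'' actually describes the pole-structure hypothesis, not the linear loop equation itself), and then applies Cauchy's formula to the rational forms $\omega_{g,n}$ to produce the kernel $K(z_1,z)$ explicitly.

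There are two concrete gaps in your plan. First, the spectral curve is \emph{irregular}: $y(z)=z/(z-1)^3$ has a pole of order $3$ at the ramification point $z=1$, so the standard ``loop equations imply topological recursion'' theorem, which is formulated for regular branch points, does not apply off the shelf at $z=1$; one must either use the irregular framework of \cite{DN2018topological} or, as the paper does, check by hand that $K(z_1,z)$ has a zero of order $3$ at $z=1$ which kills the (at most simple) poles of the $\omega$'s there, so that the $z=1$ residue in \eqref{eq:TR} contributes nothing. Your proposal is silent on this, and it is precisely where a naive appeal to the master theorem would fail. Second, the case $(g,n)=(0,3)$ requires separate treatment (\cref{lem:03}): the general inductive step uses the anti-symmetry of $W_{g,n-1}$, which fails for $(g,n-1)=(0,2)$ because $\omega_{0,2}$ differs from $W_{0,2}\,\diff x\,\diff x$ by the double pole on the diagonal (\cref{lem:02}); your plan only singles out $(0,1)$ and $(0,2)$. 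A minor slip: $m_{0,1}(\mu)$ is the Catalan number $\tfrac{1}{\mu}\binom{2\mu-2}{\mu-1}$ indexed by $\mu-1$, not $\tfrac{1}{\mu+1}\binom{2\mu}{\mu}$; with your normalization the check $W_{0,1}\,\diff x=y\,\diff x$ would not close. Your closing intuition that the sign twist and the role of $z=-1$ carry the real content is consistent with what the paper finds --- in the end only the residue at $z=-1$ survives --- but the paper establishes this by the explicit order count above rather than by matching signs term by term.
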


\begin{remark}
Note that $\diff x(z)=\frac{z^2-1}{z^2}$ has the zeroes $z=\pm 1$ and since $y(z)$ has a pole of order bigger than 1 at $z=1$, the spectral curve $(x,y)$ is irregular (see \cite{DN2018topological}). Hence the invariants $\omega_{g,n}$ agree with the invariants obtained from the local spectral curve obtained by removing the point $z=1$. In particular the residue at $z=1$ in \cref{eq:TR} does not contribute and it suffices to compute the residue at $z=-1$.\par 
Moreover, we note that while $\omega_{0,2}\neq W_{0,2}\diff x(z_1)\diff x(z_2)$, we have
\begin{equation}
\Res_{z_1,z_2\to\infty}x(z_1)^{\mu_1}x(z_2)^{\mu_2}\omega_{0,2}(z_1,z_2)=C_{0,2}(\mu_1,\mu_2)
\end{equation}
since their difference is holomorphic by definition.
\end{remark}

The starting point of our proof is the following recursion, which is a direct consequence of \cite[Theorem 2.1]{GGPNzero}.

\begin{proposition}\label{cut&join}
Let $g$ be a non-negative integer, $n\in\mathbb{N}$ and $\mu=(\mu_1,\dots,\mu_n)$ a partition of a positive integer. Then we have the following recursion
\begin{align}
-C_{g,n}(\mu) &= \sum_{j=2}^n \mu_jC_{g,n-1}(\mu_1+\mu_j,\mu_{N\setminus\{1\}})  + \sum_{\alpha+\beta=\mu_1}C_{g-1,n+1}(\alpha,\beta,\mu_{N\setminus\{1\}}) \\
&\quad + \sum_{\alpha+\beta=\mu_1}\sum_{\substack{g_1+g_2=g \\ I\sqcup J = N\setminus\{1\}}}C_{g_1,1+|I|}(\alpha,\mu_I)C_{g_2,1+|J|}(\beta,\mu_J),
\end{align}
where $\mu_I=(\mu_{i_1},\dots,\mu_{i_k})$ for $I=\{i_1,\dots,i_k\}$, $N=\{2,\dots,n\}$ and the initial value $C_{0,1}(1)=1$.
\end{proposition}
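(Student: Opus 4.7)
The plan is to deduce the recursion from the cut-and-join-type recursion for the connected labelled monotone counts $m_{g,n}(\mu)$ of Goulden--Guay-Paquet--Novak \cite[Theorem 2.1]{GGPNzero}, by tracking the sign $(-1)^{n+|\mu|}$ through each term. First I would restate (or reconstruct) that unsigned recursion; combinatorially it comes from the standard device of isolating the last transposition in a monotone factorization of a fixed permutation $\sigma$ of cycle type $\mu$. This transposition either joins the cycle carrying the marked element labeled $1$ to another marked cycle (giving the first sum, with combinatorial factor $\mu_j$ from the choice of positions within the joined cycle), or splits that cycle into two parts of sizes $\alpha$ and $\beta$ with $\alpha+\beta=\mu_1$. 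The split case then branches further depending on whether the resulting factorization remains transitive (the self-cut term, with genus dropping by one) or decomposes as two disjoint transitive pieces, distributing the remaining marked points across $I\sqcup J = N\setminus\{1\}$ and the genus as $g_1+g_2=g$.

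With the unsigned recursion in hand, the rest is pure sign bookkeeping. Writing $\varepsilon := (-1)^{n+|\mu|}$, one uses $\alpha+\beta=\mu_1$, $|I|+|J|=n-1$, and $|\mu_I|+|\mu_J|=|\mu|-\mu_1$ to check that every term on the right-hand side of the unsigned recursion picks up an overall factor $-\varepsilon$ after the substitution $m\mapsto C$: the join term contributes $(-1)^{(n-1)+|\mu|}$, the self-cut term contributes $(-1)^{(n+1)+|\mu|}$, and each product term contributes
\[(-1)^{(1+|I|)+\alpha+|\mu_I|}(-1)^{(1+|J|)+\beta+|\mu_J|} = (-1)^{n+1+|\mu|},\]
all three of which equal $-\varepsilon$. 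Dividing through by $\varepsilon$ then produces the identity with $-C_{g,n}(\mu)$ on the left exactly as stated, and the initial value $C_{0,1}(1)=1$ follows immediately from $m_{0,1}(1)=1$ together with $(-1)^{1+1}=1$.

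The only real obstacle is convention-matching with \cite{GGPNzero}: one must confirm that the genus parameter $g$ in loc.\ cit.\ agrees with ours (rather than encoding an Euler-characteristic offset), that the splitting $g_1+g_2=g$ in the third sum carries no hidden shift arising from the $(0,1)$ initial datum, and that the combinatorial weights $\mu_j$ appearing in the join sum match the labelled enumeration convention underlying $m_{g,n}$. Once these conventions are pinned down, the sign analysis above yields the proposition with no further combinatorial input.
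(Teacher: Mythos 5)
Your proposal matches the paper's own treatment: the paper gives no independent argument either, stating only that the recursion "is a direct consequence of \cite[Theorem 2.1]{GGPNzero}", i.e.\ the unsigned cut-and-join recursion for $m_{g,n}$ combined with the sign translation $C_{g,n}(\mu)=(-1)^{n+|\mu|}m_{g,n}(\mu)$. Your sign bookkeeping is correct (each of the three terms acquires the factor $(-1)^{n+1+|\mu|}=-\varepsilon$, yielding the overall minus sign on the left), and your convention-matching caveats are exactly the right things to check against the cited source.
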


The following proposition reformulates the cut and join equation \cref{cut&join} as a differential equation for generating series. 

\begin{proposition}
It holds that
\begin{align}
-W_{g,n}&(x_1,\dots,x_n )=\sum_{j=2}^n\frac{\partial}{\partial x_j}\frac{x_j}{x_1}\frac{x_1W_{g,n-1}(X_{\{1,\dots,n\}\setminus\{j\}})-x_jW_{g,n-1}(X_{\{1,\dots,n\}\setminus\{1\}})}{x_1-x_j}\\
&+\quad x_1 W_{g-1,n+1}(x_1,X_{\{1,\dots,n\}}) + x_1\sum_{\substack{g_1+g_2=g \\ I\sqcup J = N\setminus\{1\}}}W_{g_1,|I|+1}(x_1,X_I)W_{g_2,|J|+1}(x_1,X_J) - \frac{1}{x^2}\delta_{g,0}\delta_{n,1}
\end{align}
\end{proposition}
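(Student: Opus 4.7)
The plan is to multiply the cut-and-join recursion of \cref{cut&join} by $\prod_{i=1}^n x_i^{-\mu_i-1}$ and sum over all $\mu_1,\ldots,\mu_n \geq 1$. The left-hand side immediately becomes $-W_{g,n}(x_1,\ldots,x_n)$ by definition of the generating series. The three sums on the right-hand side will be shown to produce the three corresponding terms of the proposition, while the inhomogeneous term $-\delta_{g,0}\delta_{n,1}/x^2$ accounts for the single base case $(g,n,\mu_1)=(0,1,1)$: there $C_{0,1}(1)=1$ whereas the right-hand side of \cref{cut&join} vanishes (all three sums are empty), so naively summing the recursion undercounts by exactly $-x^{-2}$.

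The two cut-type sums are immediate. For the disconnected cut, the relation $\alpha+\beta=\mu_1$ together with $x_1^{-\mu_1-1} = x_1\cdot x_1^{-\alpha-1}x_1^{-\beta-1}$ yields, after summing over $\alpha,\beta \geq 1$ and $\mu_2,\dots,\mu_n \geq 1$, the expression $x_1\,W_{g-1,n+1}(x_1,x_1,x_2,\dots,x_n)$. The splitting term decouples in the same fashion into $x_1\sum W_{g_1,|I|+1}(x_1,X_I)\,W_{g_2,|J|+1}(x_1,X_J)$.

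The main obstacle is the join term $\sum_{j=2}^n \mu_j\, C_{g,n-1}(\mu_1+\mu_j,\mu_{N\setminus\{j\}})$. I would first establish the auxiliary identity
\[\sum_{\alpha,\beta\ge 1} G(\alpha+\beta)\,x^{-\alpha-1}y^{-\beta-1} = -\frac{\mathcal{G}(x)-\mathcal{G}(y)}{x-y} - \frac{\mathcal{G}(y)}{x} - \frac{\mathcal{G}(x)}{y},\]
where $\mathcal{G}(x)=\sum_{\nu\ge 1}G(\nu)x^{-\nu-1}$, by expanding $(x^{-\nu}-y^{-\nu})/(x-y)$ as a finite geometric sum and then subtracting the boundary contributions $\alpha=0$ and $\beta=0$ from the unrestricted sum. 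The factor $\mu_j$ is then absorbed via $\mu_j x_j^{-\mu_j-1} = -\partial_{x_j}(x_j^{-\mu_j})$, turning the $j$th join contribution into $-\partial_{x_j}(x_j\cdot S_1)$, where $S_1$ is the identity above applied with $x=x_1$, $y=x_j$, and $\mathcal{G}=F$ for $F(t):=W_{g,n-1}(t,x_2,\dots,\hat{x}_j,\dots,x_n)$. A short algebraic simplification combining $x_j F(x_j)/x_1$ with $-x_j F(x_j)/(x_1-x_j)$ into the single fraction $-x_j^2 F(x_j)/[x_1(x_1-x_j)]$ collapses the expression into $\partial_{x_j}\bigl[(x_j/x_1)\,(x_1 F(x_1)-x_j F(x_j))/(x_1-x_j)\bigr]$. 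Finally, the symmetry of $W_{g,n-1}$ in its arguments (inherited from symmetry of $C_{g,n-1}$ in its cycle-length inputs) identifies $F(x_1)$ with $W_{g,n-1}(X_{\{1,\dots,n\}\setminus\{j\}})$ and $F(x_j)$ with $W_{g,n-1}(X_{\{1,\dots,n\}\setminus\{1\}})$, completing the match with the first term on the right-hand side of the proposition.
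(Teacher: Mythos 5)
Your proposal is correct and follows essentially the same route as the paper: multiply the cut-and-join recursion by $\prod_i x_i^{-\mu_i-1}$, sum, absorb the factor $\mu_j$ as $-\partial_{x_j}(x_j^{-\mu_j})$, and evaluate the convolution $\sum_{\alpha+\beta=\nu}x_1^{-\alpha}x_j^{-\beta}$ by a geometric sum before invoking the symmetry of $W_{g,n-1}$. The only (immaterial) difference is bookkeeping: the paper extends the sums to $\mu_1,\mu_j\ge 0$ and shifts indices, while you keep the restricted sum and subtract the boundary terms explicitly; you also spell out the origin of the $-\delta_{g,0}\delta_{n,1}/x^2$ correction, which the paper leaves implicit.
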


\begin{proof}
We multiply the cut and join equaions from \cref{cut&join} by $x_1^{-(\mu_1+1)}\dots x_n^{-(\mu_n+1)}$ and sum over $\mu_1,\mu_2\dots \mu_n\geq 1$. We start by dealing with the first term on the right hand side. First observe the following. For fixed $j$ we have

\begin{align}
\sum_{\mu_1,\mu_j =1}^\infty\mu_j \frac{C_{g,n-1}(\mu_1+\mu_j,\mu_{N\setminus\{\mu_j\}})}{x_1^{\mu_1+1} x_j^{\mu_j+1}} 
&=\sum_{\substack{\mu_1,\mu_j\geq 1}}\mu_j\frac{C_{g,n-1}(\mu_1+\mu_j,\mu_{N\setminus\{\mu_j\}})}{x_1^{\mu_1+1}x_j^{\mu_j+1}}\\
&=-\frac{\partial}{\partial x_j} \sum_{\substack{\mu_1,\mu_j\geq 1}}\frac{C_{g,n-1}(\mu_1+\mu_j,\mu_{N\setminus\{\mu_j\}})}{x_1^{\mu_1+1}x_j^{\mu_j}}\\
&=-\frac{\partial}{\partial x_j} \sum_{\substack{\mu_1\geq 1,\mu_j\geq 0}}\frac{C_{g,n-1}(\mu_1+\mu_j,\mu_{N\setminus\{\mu_j\}})}{x_1^{\mu_1+1}x_j^{\mu_j}}\\
&=-\frac{\partial}{\partial x_j}\frac{1}{x_1^2}\sum_{\substack{\mu_1\geq 1,\mu_j\geq 0}}\frac{C_{g,n-1}(\mu_1+\mu_j,\mu_{N\setminus\{\mu_j\}})}{x_1^{\mu_1-1}x_j^{\mu_j}}\\
&=-\frac{\partial}{\partial x_j}\frac{1}{x_1^2}\sum_{\substack{\mu_1\geq 0,\mu_j\geq 0}}\frac{C_{g,n-1}(\mu_1+\mu_j+1,\mu_{N\setminus\{\mu_j\}})}{x_1^{\mu_1}x_j^{\mu_j}}\\
&=-\frac{\partial}{\partial x_j}\frac{1}{x_1^2}\sum_{\nu=0}^\infty \sum_{\substack{\mu_1+\mu_j=\nu \\ \mu_1,\mu_j\geq 0}}\frac{C_{g,n-1}(\nu+1,\mu_{N\setminus\{\mu_j\}})}{x_1^{\mu_1}x_j^{\mu_j}}.
\end{align}

We note that

\begin{align}
\sum_{\mu_1+\mu_j=\nu}\frac{1}{x_1^{\mu_1}x_j^{\mu_j}} &=\frac{\frac{1}{x_1^{\nu+1}}-\frac{1}{x_j^{\nu+1}}}{\frac{1}{x_1}-\frac{1}{x_j}}\\
&=-x_1x_j\frac{\frac{1}{x_1^{\nu+1}}-\frac{1}{x_j^{\nu+1}}}{x_1-x_j}.
\end{align}

and hence we find

\begin{align}
-\frac{\partial}{\partial x_j}\frac{1}{x_1^2}\sum_{\nu=0}^\infty \sum_{\substack{\mu_1+\mu_j=\nu \\ \mu_1,\mu_j\geq 0}} \frac{C_{g,n-1}(\nu+1,\mu_{N\setminus\{\mu_j\}})}{x_1^{\mu_1}x_j^{\mu_j}}
&=\frac{\partial}{\partial x_j}\frac{x_j}{x_1}\sum_{\nu=0}^\infty \frac{C_{g,n-1}(\nu+1,\mu_{N\setminus\{\mu_j\}})(\frac{1}{x_1^{\nu+1}}-\frac{1}{x_j^{\nu+1}})}{x_1-x_j}\\
&=\frac{\partial}{\partial x_j}\frac{x_j}{x_1}\sum_{\nu=0}^\infty \frac{C_{g,n-1}(\nu+1,\mu_{N\setminus\{\mu_j\}})(\frac{x_1}{ x_1^{\nu+2}}-\frac{x_j}{x_j^{\nu+2}})}{x_1-x_j}.
\end{align}

Further, we can put this in the summation over all $\mu_1,\dots,\mu_n$ and we obtain

\begin{align}
\sum_{\mu_1,\dots\mu_n =1}^\infty\mu_j &\frac{C_{g,n-1}(\mu_1+\mu_j,\mu_{N\setminus\{\mu_j\}})}{x_1^{\mu_1+1}\dots x_n^{\mu_n+1}}\\ 
&=\sum_{\substack{\mu_i=1\\ i=2,\dots,n \\ i\neq j}}^\infty \frac{1}{x_2^{\mu_2+1}\dots x_n^{\mu_n+1}}\frac{\partial}{\partial x_j}\frac{x_j}{x_1}\sum_{\nu=0}^\infty \frac{C_{g,n-1}(\nu+1,\mu_{N\setminus\{\mu_j\}})(\frac{x_1}{ x_1^{\nu+2}}-\frac{x_j}{x_j^{\nu+2}})}{x_1-x_j}\\
&= \frac{\partial}{\partial x_j}\frac{x_j}{x_1} \frac{x_1\sum_{\nu}\sum_{\mu_i}\frac{C_{g,n-1}(\nu+1,\mu_{N\setminus\{\mu_j\}})}{ x_1^{\nu+2}x_2^{\mu_2+1}\dots x_n^{\mu_n+1}}-x_j\sum_{\nu}\sum_{\mu_i}\frac{C_{g,n-1}(\nu+1,\mu_{N\setminus\{\mu_j\}})}{x_2^{\mu_2+1}\dots x_n^{\mu_n+1}x_j^{\nu+2}}}{x_1-x_j}\\
&=\frac{\partial}{\partial x_j}\frac{x_j}{x_1} \frac{x_1W_{g,n-1}(X_{\{1,\dots,n\}\setminus\{j\}})-x_jW_{g,n-1}(X_{\{1,\dots,n\}\setminus\{1\}})}{x_1-x_j}
\end{align}

We proceed analogously for the second term and observe that

\begin{align}
\sum_{\mu_1,\dots,\mu_n = 1}^{\infty}\sum_{\alpha+\beta=\mu_1}\frac{C_{g-1,n+1}(\alpha,\beta,\mu_{N\setminus\{1\}})}{x_1^{\mu_1+1}\dots x_n^{\mu_n+1}} &= \sum_{\mu_2,\dots\mu_n,\alpha,\beta = 1}^\infty\frac{C_{g-1,n+1}(\alpha,\beta,\mu_{N\setminus\{1\}})}{x_1^{\alpha + \beta+1}\dots x_n^{\mu_n+1}}\\
&= x_1 \sum_{\mu_2,\dots\mu_n,\alpha,\beta = 1}^\infty\frac{C_{g-1,n+1}(\alpha,\beta,\mu_{N\setminus\{1\}})}{x_1^{\alpha+1} x_1^{\beta+1}\dots x_n^{\mu_n+1}}\\
&=x_1 W_{g-1,n+1}(x_1,X_{\{1,\dots,x_n\}}).
\end{align}

Finally, for the third term we obtain

\begin{align}
\sum_{\mu_1,\dots,\mu_n = 1}^{\infty}\sum_{\alpha+\beta=\mu_1}&\sum_{\substack{g_1+g_2=g \\ I\sqcup J = N\setminus\{1\}}}\frac{C_{g_1,1+|I|}(\alpha,\mu_I)C_{g_2,1+|J|}(\beta,\mu_J)}{x_1^{\mu_1+1}\dots x_n^{\mu_n+1}}\\
&=x_1\sum_{\substack{g_1+g_2=g \\ I\sqcup J = N\setminus\{1\}}}\sum_{\substack{\alpha,\mu_i=1\\ i\in I}}^{\infty}\frac{C_{g_1,1+|I|}(\alpha,\mu_I)}{x_1^{\alpha+1}\prod_{i\in I}x_i^{\mu_i+1}}\sum_{\substack{\alpha,\mu_i=1\\ i\in J}}^{\infty}\frac{C_{g_1,1+|I|}(\beta,\mu_I)}{x_1^{\beta+1}\prod_{i\in J}x_i^{\mu_i+1}}\\
&=x_1\sum_{\substack{g_1+g_2=g \\ I\sqcup J = N\setminus\{1\}}}W_{g_1,|I|+1}(x_1,X_I)W_{g_2,|J|+1}(x_1,X_J)
\end{align}

and therefore

\begin{align}
-W_{g,n}(x_1,&\dots,x_n )=\sum_{j=2}^n\frac{\partial}{\partial x_j}\frac{x_j}{x_1}\frac{x_1W_{g,n-1}(X_{\{1,\dots,n\}\setminus\{j\}})-x_jW_{g,n-1}(X_{\{1,\dots,n\}\setminus\{1\}})}{x_1-x_j}\\
&+ x_1 W_{g-1,n+1}(x_1,X_{\{1,\dots,x_n\}}) + x_1\sum_{\substack{g_1+g_2=g \\ I\sqcup J = N\setminus\{1\}}}W_{g_1,|I|+1}(x_1,X_I)W_{g_2,|J|+1}(x_1,X_J) - \frac{1}{x^2}\delta_{g,0}\delta_{n,1}
\end{align}
\end{proof}

In the perspective of topological recursion it is handy to rewrite the cut and join equation in way that $W_{g,n}(x)$ does not appear on the right hand side.

\begin{corollary}
It holds
\begin{align}
-(1+ & 2x_1W_{0,1}(x_1))W_{g,n}(x_1,\dots,x_n )=\sum_{j=2}^n\frac{\partial}{\partial x_j}\frac{x_j}{x_1}\frac{x_1W_{g,n-1}(X_{\{1,\dots,n\}\setminus\{j\}})-x_jW_{g,n-1}(X_{\{1,\dots,n\}\setminus\{1\}})}{x_1-x_j}\\
&+ x_1 W_{g-1,n+1}(x_1,X_{\{1,\dots,n\}}) + x_1\sum_{\substack{g_1+g_2=g \\ I\sqcup J = N\setminus\{1\}}}^\prime W_{g_1,|I|+1}(x_1,X_I)W_{g_2,|J|+1}(x_1,X_J) - \frac{1}{x^2}\delta_{g,0}\delta_{n,1},
\end{align}
where $\sum^\prime$ means that the cases $(g_1,I)=(0,\emptyset)$ or $(g_2,J)=(0,\emptyset)$ are excluded.
\end{corollary}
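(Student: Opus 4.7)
The plan is to isolate, within the genus-zero/one-point ``unstable'' contributions of the splitting sum in the previous proposition, the two terms that reproduce $W_{g,n}(x_1,\dots,x_n)$ itself, and to move them to the left-hand side.

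Concretely, first I would look at the last sum in the identity of the preceding proposition,
\[
x_1\!\!\sum_{\substack{g_1+g_2=g\\ I\sqcup J=N\setminus\{1\}}}\!\!W_{g_1,|I|+1}(x_1,X_I)\,W_{g_2,|J|+1}(x_1,X_J),
\]
and identify the ordered pairs in which one of the two factors is $W_{0,1}(x_1)$. Assuming $(g,n)\ne(0,1)$, there are exactly two such ordered pairs: either $(g_1,I)=(0,\varnothing)$ (in which case $(g_2,J)=(g,N\setminus\{1\})$), contributing $W_{0,1}(x_1)\,W_{g,n}(x_1,x_2,\dots,x_n)$; or symmetrically $(g_2,J)=(0,\varnothing)$, contributing $W_{g,n}(x_1,x_2,\dots,x_n)\,W_{0,1}(x_1)$. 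These two contributions are distinct and their sum is $2W_{0,1}(x_1)W_{g,n}(x_1,\dots,x_n)$.

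Next I would split the sum as $\sum=\sum'+(\text{two excluded terms})$, substitute back into the cut-and-join equation of the preceding proposition, and transpose the two excluded terms to the left side. The left side then becomes
\[
-W_{g,n}(x_1,\dots,x_n)-2x_1W_{0,1}(x_1)W_{g,n}(x_1,\dots,x_n)=-\bigl(1+2x_1W_{0,1}(x_1)\bigr)W_{g,n}(x_1,\dots,x_n),
\]
which is exactly the left-hand side of the corollary. The remaining terms on the right are untouched, so the claimed identity follows.

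There is essentially no obstacle here; the only care one has to take is the bookkeeping of the two excluded ordered pairs and the observation that they coincide only in the exceptional case $(g,n)=(0,1)$ (which is covered directly by initial data $C_{0,1}(1)=1$ and does not need to be treated in this recursive form). No new analytic or combinatorial input is required beyond the previous proposition.
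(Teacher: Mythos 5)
Your proposal is correct and is precisely the intended derivation: the paper states this corollary without proof, and the evident argument is exactly yours — extract the two ordered unstable terms $(g_1,I)=(0,\emptyset)$ and $(g_2,J)=(0,\emptyset)$ from the splitting sum, each equal to $W_{0,1}(x_1)W_{g,n}(x_1,\dots,x_n)$, and transpose $2x_1W_{0,1}(x_1)W_{g,n}$ to the left-hand side. Your remark that the two excluded ordered pairs coincide when $(g,n)=(0,1)$, so that case must be handled by the initial data rather than by this identity, is a correct and worthwhile observation that the paper leaves implicit.
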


We now compute some special cases of $W_{g,n}$, which require special treatment in the CEO topological recursion. We have the following result for the first few values of $(g,n)$.

\begin{corollary}[{\cite[Theorem 1.1]{GGPNzero}\cite[Theorem 6.2]{GGPNpoly}}]
We have that
\begin{align}
C_{0,1}(\mu)&=(-1)^{\mu-1}\frac{1}{\mu}\binom{2\mu-2}{\mu-1},\quad C_{0,2}(\mu_1,\mu_2)=(-1)^{\mu_1+\mu_2}\frac{2\mu_1\mu_2}{\mu_1+\mu_2}\binom{2\mu_1-1}{\mu_1}\binom{2\mu_2-1}{\mu_2}\\
C_{0,3}(\mu_1,\mu_2,\mu_3)&=(-1)^{\mu_1+\mu_2+\mu_3-1}8\mu_1\binom{2\mu_1-1}{\mu_1}\mu_2\binom{2\mu_2-1}{\mu_2}\mu_3\binom{2\mu_3-1}{\mu_3}.
\end{align}
\end{corollary}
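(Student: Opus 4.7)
The three identities are translations of well-known closed formulas for genus-zero, few-part monotone single Hurwitz numbers into the present sign/normalization convention $C_{0,n}(\mu)=(-1)^{n+|\mu|}m_{0,n}(\mu)$, where $m_{0,n}(\mu_1,\ldots,\mu_n)$ counts connected labeled monotone factorizations of a fixed permutation of cycle type $(\mu_1,\ldots,\mu_n)$ into transpositions. My plan is to handle each case in turn, citing the relevant identities from the references and accounting for signs; the Proposition in the previous subsection (the cut-and-join recursion for $C_{g,n}(\mu)$) serves as an internal consistency check.

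For $C_{0,1}(\mu)$ I would argue directly: Riemann--Hurwitz forces the factorization length to equal $\mu-1$, so $m_{0,1}(\mu)$ counts monotone factorizations of a fixed $\mu$-cycle into $\mu-1$ transpositions. This is a classical Catalan count, yielding $m_{0,1}(\mu)=\tfrac{1}{\mu}\binom{2\mu-2}{\mu-1}$; multiplying by $(-1)^{\mu-1}$ gives the stated formula. For $C_{0,2}(\mu_1,\mu_2)$ and $C_{0,3}(\mu_1,\mu_2,\mu_3)$ I would invoke the explicit formulas for $m_{0,2}$ and $m_{0,3}$ from the references GGPNzero (Theorem 1.1) and GGPNpoly (Theorem 6.2), again applying the appropriate sign $(-1)^{n+|\mu|}$.

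As a self-contained verification that avoids delicate citations, one could instead substitute the proposed closed forms into the cut-and-join recursion of Proposition 6.4 and check equality: at $(g,n)=(0,2)$ the recursion expresses $C_{0,2}(\mu_1,\mu_2)$ in terms of $C_{0,1}$-values, and at $(g,n)=(0,3)$ it expresses $C_{0,3}$ in terms of $C_{0,1}$ and $C_{0,2}$. Both reductions become finite sums of products of central binomial coefficients, which can be evaluated using standard convolution identities (e.g.\ Vandermonde/Chu or hypergeometric summation).

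The only real obstacle is bookkeeping: the cited references sometimes enumerate unlabeled factorizations of a conjugacy class rather than labeled factorizations of a fixed permutation, so one must divide or multiply by the centralizer size $\prod_i\mu_i\cdot\prod_m r_m(\mu)!$ to translate between the two conventions. Once that bookkeeping is done, the formulas match on the nose.
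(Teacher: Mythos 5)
Your proposal matches what the paper does: the corollary carries no proof of its own and is justified entirely by citing \cite{GGPNzero} and \cite{GGPNpoly}, with the translation into the present convention being exactly the sign and normalization factor $(-1)^{n+|\mu|}\prod_i\mu_i$ that you identify (and that the paper records in the remark following the definition of $C_{g,n}$). Your additional observations --- the direct Catalan count $m_{0,1}(\mu)=\tfrac{1}{\mu}\binom{2\mu-2}{\mu-1}$ and the consistency check against the cut-and-join recursion --- are correct but go beyond what the paper supplies.
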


A straightforward calculation shows the following lemma.

\begin{lemma}
\label{lem:cas}
The following identities hold
\begin{align}
W_{0,1}(x(z))&=\frac{z}{(z-1)^3},\quad W_{0,2}(x(z_1),x(z_2))  =\frac{z_1^2 z_2^2}{(z_1^2-1)(z_2^2-1)(1-z_1z_2)^2}\\
W_{0,3}(x_1,x_2,x_3)&=\frac{8}{x_1^2x_2^2x_3^2(1+\frac{4}{x_1})^{\frac{3}{2}}(1+\frac{4}{x_2})^{\frac{3}{2}}(1+\frac{4}{x_3})^{\frac{3}{2}}}=\prod_{i=1}^3\frac{2}{x_i^2(1+\frac{4}{x_i})^{\frac{3}{2}}}=\prod_{i=1}^3\frac{2}{(z_i+1)^2}\frac{1}{x^\prime(z_i)}.
\end{align}
\end{lemma}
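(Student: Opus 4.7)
The plan is to compute each $W_{0,n}$ directly by substituting the explicit expressions for $C_{0,n}$ from the preceding corollary, summing via standard Catalan-type generating functions, and then verifying the result under the parametrization $x(z)=(z-1)^2/z$.

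\textbf{For $W_{0,1}$:} The coefficients $C_{0,1}(\mu) = (-1)^{\mu-1}\frac{1}{\mu}\binom{2\mu-2}{\mu-1}$ are signed Catalan numbers, whose generating function $\sum_{\mu\ge1}\frac{1}{\mu}\binom{2\mu-2}{\mu-1}t^\mu=\frac{1-\sqrt{1-4t}}{2}$ is classical. Substituting $t=-1/x$ yields $W_{0,1}(x)=\frac{-1+\sqrt{1+4/x}}{2x}$. Under $x=(z-1)^2/z$ one computes
\[1+\frac{4}{x}=\frac{(z-1)^2+4z}{(z-1)^2}=\frac{(z+1)^2}{(z-1)^2},\]
so the square root equals $(z+1)/(z-1)$ and the expression collapses to $z/(z-1)^3$.

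\textbf{For $W_{0,3}$:} Differentiating the generating function $\sum_{\mu\ge0}\binom{2\mu}{\mu}t^\mu=(1-4t)^{-1/2}$ gives $\sum_{\mu\ge1}\mu\binom{2\mu-1}{\mu}t^\mu=t(1-4t)^{-3/2}$. Since $C_{0,3}$ factorizes as a product over the three arguments, $W_{0,3}$ factorizes analogously, yielding
\[W_{0,3}(x_1,x_2,x_3)=\prod_{i=1}^3\frac{2}{x_i^2(1+4/x_i)^{3/2}}.\]
Using $(1+4/x)^{3/2}=(z+1)^3/(z-1)^3$ and $x'(z)=(z^2-1)/z^2$, a short algebraic simplification identifies each factor with $\frac{2}{(z_i+1)^2 x'(z_i)}$, proving the claim.

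\textbf{For $W_{0,2}$:} This is the main obstacle, because the factor $1/(\mu_1+\mu_2)$ in $C_{0,2}(\mu_1,\mu_2)$ prevents an immediate product decomposition. I would use the integral representation $\frac{1}{\mu_1+\mu_2}=\int_0^1 s^{\mu_1+\mu_2-1}\,ds$ to decouple the double sum into
\[W_{0,2}(x_1,x_2)=\frac{2}{x_1^2 x_2^2}\int_0^1 \frac{s\,ds}{(1+4s/x_1)^{3/2}(1+4s/x_2)^{3/2}},\]
which after rationalizing the square roots via $x_i=(z_i-1)^2/z_i$ becomes an integral of a rational function in $s$ that can be evaluated in closed form. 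The resulting expression must then be checked to equal the claimed $z_1^2 z_2^2/((z_1^2-1)(z_2^2-1)(1-z_1z_2)^2)$.

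An alternative (and perhaps cleaner) strategy would be to verify the identity by showing that both sides agree as formal series: differentiating the claimed closed form in $x_1$ should reproduce (up to known factors) the $W_{0,1}$ computation, giving an inductive check. The only subtlety in either approach is keeping track of the correct branch of $\sqrt{1+4/x}=(z+1)/(z-1)$ so that the series expansions match on both sides.
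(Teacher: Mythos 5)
The paper offers no proof of this lemma beyond the phrase ``a straightforward calculation,'' so there is no argument to compare yours against; your proposal is precisely the calculation being alluded to, and it is correct. The $W_{0,1}$ and $W_{0,3}$ parts are complete: the Catalan generating function, the derivative identity $\sum_{\mu\ge1}\mu\binom{2\mu-1}{\mu}t^\mu=t(1-4t)^{-3/2}$, the branch choice $\sqrt{1+4/x}=(z+1)/(z-1)$ (correct because it tends to $1$ as $z\to\infty$, matching $x\to\infty$), and the simplifications under $x(z)=(z-1)^2/z$, $x'(z)=(z^2-1)/z^2$ all check out. The only step you leave unexecuted is the $W_{0,2}$ integral, but it does close in elementary terms: with $a=4/x_1$, $b=4/x_2$ one verifies directly that
\begin{equation}
\frac{d}{ds}\,\frac{4+2(a+b)s}{(a-b)^2\sqrt{(1+as)(1+bs)}}=\frac{2s}{\bigl((1+as)(1+bs)\bigr)^{3/2}},
\end{equation}
so
\begin{equation}
W_{0,2}=\frac{2}{x_1^2x_2^2}\left[\frac{4+2(a+b)}{(a-b)^2\sqrt{(1+a)(1+b)}}-\frac{4}{(a-b)^2}\right],
\end{equation}
and substituting $\sqrt{1+4/x_i}=(z_i+1)/(z_i-1)$ together with $(x_1-x_2)^2=(1-z_1z_2)^2(z_1-z_2)^2/(z_1^2z_2^2)$ (the identity proved in \cref{lem:02}) reduces this to the claimed $z_1^2z_2^2/\bigl((z_1^2-1)(z_2^2-1)(1-z_1z_2)^2\bigr)$. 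The interchange of sum and integral is legitimate for $|x_i|>4$ and the resulting identity of rational functions then holds as an identity of formal Laurent series, so the argument is complete.
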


The next lemma is a key step towards the topological recursion for the numbers $C_{g,n}(\mu)$, as determining the difference between the Bergman kernel and the $(0,2)$ free energy is important for the input data of the topological recursion.

\begin{lemma}
\label{lem:02}
We have
\begin{align}
W_{0,2}(x(z_1),x(z_2))\textrm{d}x(z_1)\textrm{d}x(z_2)= \frac{\diff z_1\diff z_2}{(1-z_1z_2)^2}=\frac{\diff z_1\diff z_2}{(z_1-z_2)^2} - \frac{\diff x(z_1)\diff x(z_2)}{(x(z_1)-x(z_2))^2}
\end{align}
and in particular
\begin{align}
W_{0,2}(x(z_1),x(z_2))\textrm{d}x(z_1)\textrm{d}x(z_2)=-B(\frac{1}{z_1},z_2).
\end{align}
\end{lemma}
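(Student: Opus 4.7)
The proof plan is a direct verification split into three short algebraic computations, all starting from the explicit parametrisation $x(z)=(z-1)^2/z$ and the formula for $W_{0,2}$ recorded in \cref{lem:cas}.

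First I would compute the differential $\dd x(z)$. A short calculation gives $x'(z)=(z^2-1)/z^2$, hence
\[
\dd x(z_1)\dd x(z_2)=\frac{(z_1^2-1)(z_2^2-1)}{z_1^2 z_2^2}\,\dd z_1\dd z_2.
\]
Substituting the expression $W_{0,2}(x(z_1),x(z_2))=z_1^2 z_2^2/((z_1^2-1)(z_2^2-1)(1-z_1z_2)^2)$ from \cref{lem:cas} immediately cancels all factors except $(1-z_1z_2)^{-2}$, producing the first equality
\[W_{0,2}(x(z_1),x(z_2))\dd x(z_1)\dd x(z_2)=\frac{\dd z_1\dd z_2}{(1-z_1 z_2)^2}.\]

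Next I would establish the second equality by factoring $x(z_1)-x(z_2)$. Expanding and collecting terms yields the key identity
\[x(z_1)-x(z_2)=\frac{(z_1-z_2)(z_1z_2-1)}{z_1 z_2},\]
so that $\dd x(z_1)\dd x(z_2)/(x(z_1)-x(z_2))^2=(z_1^2-1)(z_2^2-1)/((z_1-z_2)^2(z_1z_2-1)^2)\,\dd z_1\dd z_2$. The claim then reduces to the polynomial identity $(z_1z_2-1)^2-(z_1^2-1)(z_2^2-1)=(z_1-z_2)^2$, which is immediate upon expansion.

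Finally, for the ``in particular'' statement, I would simply substitute $1/z_1$ for $z_1$ in $B(z_1,z_2)=\dd z_1\dd z_2/(z_1-z_2)^2$: using $\dd(1/z_1)=-\dd z_1/z_1^2$ and $(1/z_1-z_2)^2=(1-z_1z_2)^2/z_1^2$ gives $B(1/z_1,z_2)=-\dd z_1\dd z_2/(1-z_1z_2)^2$, matching the previously computed left-hand side up to sign. There is no real obstacle here; the only mildly delicate point is checking signs and factors in the rational identity $(z_1z_2-1)^2-(z_1^2-1)(z_2^2-1)=(z_1-z_2)^2$, which is the algebraic heart of the statement and which also explains, a posteriori, why the involution $z_1\mapsto 1/z_1$ intertwines the Bergman kernel with $W_{0,2}\,\dd x\,\dd x$ on this spectral curve.
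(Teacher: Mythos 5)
Your proof is correct and takes essentially the same route as the paper's: both reduce the first equality to the cancellation of $x'(z_i)=(z_i^2-1)/z_i^2$ against the formula for $W_{0,2}$ from \cref{lem:cas}, and both reduce the second equality to the factorisation $x(z_1)-x(z_2)=(z_1-z_2)(z_1z_2-1)/(z_1z_2)$ together with the polynomial identity $(z_1z_2-1)^2-(z_1^2-1)(z_2^2-1)=(z_1-z_2)^2$. The only difference is cosmetic (you factor the difference directly where the paper expands its square), and you additionally spell out the substitution $z_1\mapsto 1/z_1$ for the ``in particular'' claim, which the paper leaves implicit.
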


\begin{proof}
From the last proposition and $x^\prime (z_i)= \frac{z_i^2-1}{z_i^2}$ we obtain
\begin{align}
W_{0,2}(x(z_1),x(z_2)) = \frac{z_1^2}{(z_1^2-1)}\frac{z_2^2}{(z_2^2-1)}\frac{1}{(1-z_1z_2)^2} = \frac{1}{x^\prime(z_1)}\frac{1}{x^\prime(z_2)}\frac{1}{(1-z_1z_2)^2},
\end{align}
from which the first equality follows immediately. For the second one we first note that
\begin{align}
(x(z_1)-x(z_2))^2 &=\bigg(\frac{z_1^2+1}{z_1}-\frac{z_2^2+1}{z_2}\bigg)^2\\
&=\bigg(\frac{z_2(z_1^2+1)-z_1(z_2^2+1)}{z_1z_2}\bigg)^2\\
&=\bigg(\frac{z_2z_1^2+z_2-z_1z_2^2-z_1}{z_1z_2}\bigg)^2\\
&=\bigg(\frac{(1-z_1z_2)(z_2-z_1)}{z_1z_2}\bigg)^2\\
&=\frac{(1-z_1z_2)^2(z_1-z_2)^2}{z_1^2z_2^2},
\end{align}
hence
\begin{align}
\frac{1}{(z_1-z_2)^2} - \frac{x^\prime(z_1)x^\prime(z_2)}{(x(z_1)-x(z_2))^2}&=\frac{1}{(z_1-z_2)^2} - \frac{\frac{z_1^2-1}{z_1^2}\frac{z_2^2-1}{z_2^2}}{\frac{(1-z_1z_2)^2(z_1-z_2)^2}{z_1^2z_2^2}}\\
&=\frac{1}{(z_1-z_2)^2}-\frac{(z_1^2-1)(z_2^2-1)}{(1-z_1z_2)^2(z_1-z_2)^2}\\
&=\frac{(1-z_1z_2)^2-(z_1^2-1)(z_2^2-1)}{(1-z_1z_2)^2(z_1-z_2)^2}\\
&=\frac{(z_1-z_2)^2}{(1-z_1z_2)^2(z_1-z_2)^2}\\
&=\frac{1}{(1-z_1z_2)^2}.
\end{align}
\end{proof}

\subsection{Proof of \cref{TheoremTR}}
\label{sub:proofTR}
Our proof of \cref{TheoremTR} is inspired by the approach in \cite{DDMmonotone}. We begin by considering the case $(g,n)=(0,3)$, which requires an independent discussion.

\begin{lemma}
\label{lem:03}
The multidifferential $\omega_{0,3}$ satisfies the recursion in \cref{eq:TR}.
\end{lemma}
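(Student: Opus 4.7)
The plan is to reduce the recursion \cref{eq:TR} at $(g,n) = (0,3)$ to a single residue computation at $z = -1$. The term $\omega_{g-1,n+1} = \omega_{-1,4}$ is identically zero; in the quadratic sum, $g_1 + g_2 = 0$ forces $g_1 = g_2 = 0$, and the partition $I \sqcup J = \{2, 3\}$ together with the exclusion $(g_i, I_i) \neq (0, \emptyset)$ implicit in the prime on the sum leaves exactly two terms, $(I,J) = (\{2\},\{3\})$ and $(I,J) = (\{3\},\{2\})$. Moreover, the residue at $z = 1$ drops out by the irregularity remark immediately following \cref{TheoremTR}. Thus the claim reduces to
\begin{equation*}
\omega_{0,3}(z_1, z_2, z_3) = \Res_{z=-1} K(z_1, z) \bigl[\omega_{0,2}(z, z_2) \omega_{0,2}(\sigma(z), z_3) + \omega_{0,2}(z, z_3) \omega_{0,2}(\sigma(z), z_2)\bigr].
\end{equation*}

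The second step is to rewrite $\omega_{0,2}$ in uniformizing coordinates. By \cref{lem:02}, the correction to $W_{0,2}(x_1, x_2) \diff x_1 \diff x_2$ built into the definition of $\omega_{0,2}$ turns it into the standard Bergman kernel, so that $\omega_{0,2}(z_1, z_2) = \frac{\diff z_1 \diff z_2}{(z_1 - z_2)^2}$. A direct substitution with $\sigma(z) = 1/z$ then yields
\begin{equation*}
\omega_{0,2}(\sigma(z), z_i) = -\frac{\diff z \, \diff z_i}{(1 - z z_i)^2},
\end{equation*}
so that both factors in each summand are rational in $z$.

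The final step is a mechanical residue calculation. The only source of a factor $(z+1)$ in the integrand is the denominator of $K$, so the pole at $z = -1$ is simple and the residue is obtained by clearing $(z+1)$ and evaluating at $z = -1$. Exploiting the $z_2 \leftrightarrow z_3$ symmetry of the two summands, this collapses to
\begin{equation*}
\omega_{0,3}(z_1, z_2, z_3) = \frac{8 \, \diff z_1 \, \diff z_2 \, \diff z_3}{(z_1+1)^2 (z_2+1)^2 (z_3+1)^2}.
\end{equation*}
Comparison with the formula $W_{0,3}(x_1, x_2, x_3) = \prod_{i=1}^{3} \tfrac{2}{(z_i+1)^2 x'(z_i)}$ from \cref{lem:cas} shows that this coincides with $W_{0,3}(x(z_1), x(z_2), x(z_3)) \prod_{i=1}^3 \diff x(z_i)$, i.e.\ with $\omega_{0,3}$ as defined in \cref{TheoremTR}. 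There is no genuine obstacle here; the only delicate bookkeeping points are the correct reading of the exclusion convention on $\sum'$ and the sign in the transformation of the Bergman kernel under $\sigma$.
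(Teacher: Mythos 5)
Your proof is correct and follows essentially the same route as the paper: reduce the $(0,3)$ recursion to the two cross terms, substitute the Bergman kernel for $\omega_{0,2}$, observe that the integrand has only a simple pole at $z=-1$ coming from the denominator of $K$, evaluate the residue to get $\frac{8\,\diff z_1\diff z_2\diff z_3}{(z_1+1)^2(z_2+1)^2(z_3+1)^2}$, and match this against the closed form of $W_{0,3}$ from \cref{lem:cas}. The only cosmetic difference is that you explicitly invoke the irregularity remark to discard the residue at $z=1$ and spell out the sign in $\omega_{0,2}(\sigma(z),z_i)$, both of which the paper leaves implicit.
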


\begin{proof}
Recall that by \cref{lem:cas}, we have 
\begin{align}
W_{0,3}(x_1,x_2,x_3)&=\frac{8}{x_1^2x_2^2x_3^2(1+\frac{4}{x_1})^{\frac{3}{2}}(1+\frac{4}{x_2})^{\frac{3}{2}}(1+\frac{4}{x_3})^\frac{3}{2}}\\
&=\prod_{i=1}^3\frac{2}{x_i^2(1+\frac{4}{x_i})^{\frac{3}{2}}}.
\end{align}
We find
\begin{align}
W_{0,3}(z_1,z_2,z_3)&=\prod_{i=1}^3\frac{2}{x_i(z)^2(1+\frac{4}{x_i(z)})^{\frac{3}{2}}}\\
&=\prod_{i=1}^3\frac{2z_i^2(z_i-1)^3}{(z_i-1)^4(z_i+1)^3}\\
&=\prod_{i=1}^3\frac{2}{(z_i+1)^2}\frac{z_i^2}{(z_i+1)(z_i-1)}\\
&=\prod_{i=1}^3\frac{2}{(z_i+1)^2}\frac{1}{x^\prime(z_i)}.
\end{align} 
The recursion from topological recursion reads
\begin{align}
 \omega_{0,3}&(z_1,z_2,z_3)=\Res_{z\to -1} K(z_1,z) \bigg[w_{0,2}(z,z_2)w_{0,2}(\frac{1}{z},z_3)+w_{0,2}(z,z_3)w_{0,2}(\frac{1}{z},z_2)\bigg]\\
 &=\Res_{z\to -1}\frac{z(z-1)^3\diff z_1}{2(z+1)(z_1-z)(z_1z-1)\diff z}\bigg[\frac{\diff z\diff z_2}{(z-z_2)^2}\frac{\diff \frac{1}{z}\diff z_3}{(\frac{1}{z}-z_3)^2}+\frac{\diff z\diff z_3}{(z-z_3)^2}\frac{\diff \frac{1}{z}\diff z_2}{(\frac{1}{z}-z_2)^2}\bigg]\\
 &=\Res_{z\to -1}\frac{1}{z+1}\frac{-(z-1)^3\diff z_1}{2 z(z_1-z)(z_1z-1)\diff z}\bigg[\frac{1}{(z-z_2)^2(\frac{1}{z}-z_3)^2}+\frac{1}{(z-z_3)^2(\frac{1}{z}-z_2)^2}\bigg]\diff z\diff z_1\diff z_2\diff z_3,
\end{align}
which is of the form 
\begin{align}
\frac{f(z,z_1,z_2,z_3)\diff z}{(z+1)}\diff z_1\diff z_2\diff z_3
\end{align}
where $f$ is holomorphic in $z$ around $z=-1$. Hence we get
\begin{align}
\omega_{0,3}(z_1,z_2,z_3)=f(1,z_1,z_2,z_3)\diff z_1\diff z_2\diff z_3 = \frac{8\diff z_1\diff z_2 \diff z_3}{(z_1+1)^2(z_2+1)^2(z_3+1)^2},
\end{align}
which concludes the proof.
\end{proof}

 Recall the polynomiality result for monotone Hurwitz numbers.
\begin{theorem}[\cite{GGPNpoly}]
There are symmetric rational functions $\vec{P}_{g,h}$ such that
\begin{align}
\vec{H}_{g,n}(\mu_1,\dots,\mu_n)=\prod_{i=1}^{n}\binom{2\mu_i}{\mu_i}\vec{P}_{g,n}(\mu_1,\mu_2,\dots,\mu_n).
\end{align}
Moreover if $(g,n)\neq (0,1), (0,2)$ then $\vec{P}_{g,n}$ is a polynomial with rational coefficients of degree $3g-3+n$.
\end{theorem}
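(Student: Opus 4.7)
The plan is to prove this by strong induction on $2g-2+n$, using the cut-and-join recursion of \cref{cut&join} as the recursive engine, together with the fact that the central binomial coefficients $\binom{2\mu_i}{\mu_i}$ can be factored out of that recursion in a controlled way. The translation between $\vec{H}_{g,n}$ and the cumulants $C_{g,n}$ of the Weingarten function introduced earlier costs only signs and an overall factor $\prod_i \mu_i$, so it suffices to prove the analogous statement for $C_{g,n}$ (equivalently, for $W_{g,n}$ read off at the level of coefficients).

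First I would settle the base cases. For $(g,n)=(0,3)$ the explicit evaluation $C_{0,3}(\mu_1,\mu_2,\mu_3)=(-1)^{|\mu|-1}\,8\prod_i\mu_i\binom{2\mu_i-1}{\mu_i}$ together with the identity $\mu\binom{2\mu-1}{\mu}=\tfrac12\mu\binom{2\mu}{\mu}$ gives a constant $\vec{P}_{0,3}$, which is a polynomial of the expected degree $0=3\cdot 0-3+3$. The case $(1,1)$ follows from a direct character-theoretic evaluation using $f_{(2)}(\lambda)$ and the shifted-symmetric machinery of \cref{sec:shiftedsymmetric}. For the two exceptional pairs $(0,1)$ and $(0,2)$, \cref{lem:cas} yields $\vec{P}_{0,1}(\mu)=\tfrac{1}{\mu(2\mu-1)}$ and $\vec{P}_{0,2}(\mu_1,\mu_2)=\tfrac{2}{\mu_1+\mu_2}$, which are symmetric rational but visibly not polynomial, accounting for the two exclusions in the statement.

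For the inductive step I would substitute the ansatz $\vec{H}_{g',n'}(\mu)=\prod_i\binom{2\mu_i}{\mu_i}\vec{P}_{g',n'}(\mu)$ into the join-cut recursion. Two algebraic inputs are needed. The ``join'' terms $\mu_j C_{g,n-1}(\mu_1+\mu_j,\mu_{N\setminus\{j\}})$ are handled by the one-variable identity $\binom{2(\alpha+\beta)}{\alpha+\beta}/(\binom{2\alpha}{\alpha}\binom{2\beta}{\beta})$, which can be polynomially absorbed using standard binomial manipulations. The ``cut'' sums $\sum_{\alpha+\beta=\mu_1}$ require the binomial convolution identity, which via $\sum_{\alpha\geq 0}\binom{2\alpha}{\alpha}z^\alpha=(1-4z)^{-1/2}$ (and its derivatives) translates a polynomial kernel in $(\alpha,\beta)$ into $\binom{2N}{N}$ times a polynomial in $N$. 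Together these allow the binomial prefactor to be extracted uniformly from both sides of the recursion, leaving a clean polynomial recursion for $\vec{P}_{g,n}$. The degree bound $3g-3+n$ is inherited from the Euler-characteristic identity $(3g_1-3+|I|+1)+(3g_2-3+|J|+1)=3g-3+n$ in the bilinear sum, while the linear terms shift by exactly the right amount.

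The main obstacle is precisely this degree tracking: the binomial convolution identity increases polynomial degree by one at each application, so one must check that the resulting shifts match the Euler-characteristic count rather than exceeding it. An alternative route that bypasses these manipulations would be to read the statement off the topological recursion of \cref{TheoremTR}: the differentials $\omega_{g,n}$ are residues of rational forms on the spectral curve $x(z)=(z-1)^2/z$, $y(z)=z/(z-1)^3$, and the Lagrange inversion attached to this $x$ produces central binomials, with the stable part of the residue visibly polynomial of degree $3g-3+n$ by a pole-order analysis at the ramification point $z=-1$.
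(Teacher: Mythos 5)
The paper itself contains no proof of this statement: it is quoted verbatim from \cite{GGPNpoly} and used as an input to \cref{cor:pole} and hence to \cref{TheoremTR}. So your proposal can only be measured against the argument in that reference, which does run an induction on $2g-2+n$ through the monotone join--cut equation, but at the level of generating functions in a rational parametrization rather than at the level of coefficients.

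That difference is where your main route breaks. Both ``algebraic inputs'' you invoke are false as stated. For the join terms, the ratio $\binom{2(\alpha+\beta)}{\alpha+\beta}\big/\bigl(\tbinom{2\alpha}{\alpha}\tbinom{2\beta}{\beta}\bigr)$ is not a polynomial in $(\alpha,\beta)$ (its values $3/2$, $5/3$, $35/18$ for $(1,1),(1,2),(2,2)$ already rule this out), so nothing is ``polynomially absorbed'' there. For the cut terms the failure is structural: a convolution $\sum_{\alpha+\beta=N}\tbinom{2\alpha}{\alpha}\tbinom{2\beta}{\beta}\,p(\alpha,\beta)$ with polynomial kernel is the coefficient of $z^N$ in a product of $\Q[z]$-combinations of \emph{odd} powers of $(1-4z)^{-1/2}$, hence a combination of \emph{integer} powers of $(1-4z)^{-1}$, whose coefficients grow like $4^N$ times a polynomial rather than $\tbinom{2N}{N}$ times a polynomial --- already $\sum_{\alpha+\beta=N}\tbinom{2\alpha}{\alpha}\tbinom{2\beta}{\beta}=4^N$ and $4^N/\tbinom{2N}{N}\sim\sqrt{\pi N}$. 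The theorem is true only because these even-power contributions cancel across the recursion, and exhibiting that cancellation requires precisely the step your sketch omits: move the $(0,1)$-terms of \cref{cut&join} to the left, pass to the variable $z$, and use the antisymmetry of $W_{g,n}$ under the deck transformation $z\mapsto 1/z$ to project onto the odd part (polynomiality of $\vec P_{g,n}$ is \emph{equivalent} to $W_{g,n}$ being an odd rational function with poles only over the branch point). This is the mechanism both in \cite{GGPNpoly} and in the proof of \cref{TheoremTR} here. Your suggested alternative --- reading the statement off \cref{TheoremTR} --- is closer to a correct proof, but within this paper it is circular, since \cref{cor:pole} and therefore \cref{TheoremTR} are deduced \emph{from} the polynomiality theorem. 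Finally, your degree count for the bilinear term gives $(3g_1-3+|I|+1)+(3g_2-3+|J|+1)=3g+n-5$, so the weights $\alpha\beta$ and the convolution must contribute exactly two further degrees; you correctly flag this as the delicate point but do not carry out the check.
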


Since $C_{g,n}$ agree with $\vec{H}_{g,h}$ up to the factor $(-1)^b \prod_{i=1}^n \mu_i$, we immediately get
\begin{align}
C_{g,n}=(-1)^b\prod_{i=1}^{n}\mu_i\binom{2\mu_i}{\mu_i}\vec{P}_{g,n}(\mu_1,\mu_2,\dots,\mu_n).
\end{align}
Thus, for,

\begin{equation}
\vec{P}_{g,n}(\mu_1,\dots,\mu_n)=\sum_{\underline{a}=\underline{0}}^{\textrm{finite}}B_{g,n}(\underline{a})\prod\mu_i^{a_i}
\end{equation}

we can write our generating function as

\begin{align}
W_{g,n}(x_1,\dots,x_n) &=\sum_{\underline{a}=\underline{0}}^{\textrm{finite}}B_{g,n}(\underline{a})\prod_{i=1}^n\sum_{\mu_i=1}^\infty\mu_i^{a_i+1}\binom{2\mu_i}{\mu_i}\bigg(\frac{-1}{x_i}\bigg)^{\mu_i+1}\\
&=\sum_{\underline{a}=\underline{0}}^{\textrm{finite}}B_{g,n}(\underline{a})\prod_{i=1}^n f_{a_i}(x_i)
\end{align}
with
\begin{equation}
f_a(x)=\sum_{\mu=1}^\infty\mu^{a+1}\binom{2\mu}{\mu}\bigg(\frac{-1}{x}\bigg)^{\mu+1}.
\end{equation}

A careful analysis of the functions $f_a$ will give us the analytic properties of $W_{g,n}$.

\begin{lemma}
\label{cor:pole}
Let $(g,n)\neq (0,1),(0,2)$, then the functions $W_{g,n}(z_1,\dots,z_n)$ satisfy
\begin{align}
W_{g,n}(z_1,\dots,z_n)=-W_{g,n}(\sigma(z_1),z_2,\dots,z_n)=-W_{g,n}(\frac{1}{z_1},z_2,\dots,z_n).
\end{align}
Moreover they are rational functions in each $z_i$ having poles at $z_i=1$ and at $z_i=-1$.
\end{lemma}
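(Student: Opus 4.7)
The plan is to reduce the statement to a property of the single-variable building blocks in the polynomial expansion
\[ W_{g,n}(x_1,\ldots,x_n) \;=\; \sum_{\underline a} B_{g,n}(\underline a) \prod_{i=1}^n f_{a_i}(x_i), \]
which is valid precisely for $(g,n)\neq(0,1),(0,2)$ by the polynomiality result of \cite{GGPNpoly}. So it suffices to prove, for each $a\geq 0$, that $f_a(x(z))$ is rational in $z$ with poles only at $z=\pm1$ and that $f_a(x(1/z))=-f_a(x(z))$.

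The closed form I would use is the following: with $u=-1/x$ and the identity $\sum_{\mu\geq 0}\binom{2\mu}{\mu}u^\mu=(1-4u)^{-1/2}$,
\[ f_a(x) \;=\; u\cdot\bigl(u\,\tfrac{d}{du}\bigr)^{a+1}(1-4u)^{-1/2}. \]
A short induction based on
\[ u\,\tfrac{d}{du}\bigl[u^m(1-4u)^{-\tfrac12-k}\bigr] \;=\; m\,u^m(1-4u)^{-\tfrac12-k}+(4k+2)\,u^{m+1}(1-4u)^{-\tfrac32-k} \]
shows that $f_a$ is a finite linear combination of monomials $u^{m+1}(1-4u)^{-1/2-k}$, and the very first application of $u\,d/du$ already forces $k\geq 1$ in every term (since it kills the $m=0$ piece), while subsequent applications can only preserve or increase $k$. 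Now substitute $x(z)=(z-1)^2/z$, so that $u=-z/(z-1)^2$ is rational in $z$ with a pole only at $z=1$ and $1-4u=(z+1)^2/(z-1)^2$. Taking the branch of $(1-4u)^{-1/2}$ compatible with the formal Laurent expansion at $x=\infty$, i.e.\ the branch equal to $1$ at $z=0$, which is $(1-z)/(1+z)$, one obtains $(1-4u)^{-1/2-k}=\bigl(\tfrac{1-z}{1+z}\bigr)^{2k+1}$, rational in $z$ with a pole only at $z=-1$. Combining the two factors proves the rationality and pole structure. For the sign flip, $u$ is invariant under $z\mapsto 1/z$ because $x(1/z)=x(z)$, while $(1-z)/(1+z)\mapsto-(1-z)/(1+z)$, so $(1-4u)^{-1/2-k}$ picks up the factor $(-1)^{2k+1}=-1$; since every term in $f_a$ has $k\geq 1$ the sign is uniform, yielding $f_a(x(1/z))=-f_a(x(z))$. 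Substituting back into the product expansion of $W_{g,n}$ changes only the $z_1$-factor and we are done.

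The main subtlety — not so much a genuine obstacle as something requiring care — is the choice of branch of the square root: it must be the one consistent with the Laurent expansion about $x=\infty$, otherwise the explicit rational identifications flip sign and the argument collapses. The key structural fact making everything work is the uniform lower bound $k\geq 1$ in every term of $f_a$; were a $k=0$ term to survive, it would be invariant under $z\mapsto 1/z$ and destroy the antisymmetry. This is reflected in the fact that the assertion genuinely fails outside the polynomiality regime, as one sees at once from $W_{0,1}(x(z))=z/(z-1)^3$, which has no such antisymmetry.
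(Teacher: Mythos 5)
Your proof is correct, and it follows the same overall reduction as the paper: by the polynomiality result of \cite{GGPNpoly}, everything comes down to showing that each building block $f_a(x(z))$ is rational with poles only at $z=\pm1$ and is antisymmetric under $z\mapsto 1/z$. Where you diverge is in how you establish this for $f_a$. The paper proceeds by induction on the recursion $f_a=-\partial_x\bigl(xf_{a-1}\bigr)$: it computes $f_0(x(z))$ explicitly, checks the sign flip by hand for $f_0$, and then uses that $\partial_x$ commutes with the substitution $z\mapsto\sigma(z)$ (since $x\circ\sigma=x$) to propagate both the antisymmetry and the pole structure inductively. You instead derive a normal form, writing $f_a$ as a finite combination of monomials $u^{m+1}(1-4u)^{-1/2-k}$ with $k\geq 1$, and read off rationality, pole locations, and the sign flip termwise from the identities $u=-z/(z-1)^2$ and $(1-4u)^{-1/2}=(1-z)/(1+z)$. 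Your emphasis on the branch choice is well placed: with the branch fixed by the Laurent expansion at $x=\infty$ one in fact gets $f_0(x(z))=-2z^2/\bigl((z-1)(z+1)^3\bigr)$, i.e.\ the opposite sign to the formula displayed in the paper's proof (a harmless slip there, since only the antisymmetry is used). One small addition would strengthen your version: in your induction the two indices actually satisfy $m=k$ in every surviving term (each application of $u\,d/du$ either preserves $(m,k)$ or increments both), so each monomial $u^{m+1}(1-4u)^{-1/2-m}$ has a pole of order exactly $2(m+1)-(2m+1)=1$ at $z=1$. Recording this recovers the finer statement of the paper's proof --- simple pole at $z=1$, pole of order $2a+3$ at $z=-1$ --- which is what \cref{cor:sym} and the residue analysis in the proof of \cref{TheoremTR} actually rely on, even though the lemma as stated does not mention pole orders.
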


\begin{proof}
Note that the functions $f_a$ satisfy the recursion

\begin{equation}\label{a}
f_a(x)=\sum_{\mu=1}^\infty \mu^{a+1}\binom{2\mu_i}{\mu_i}\bigg(\frac{-1}{x}\bigg)^{\mu+1}
=-\frac{\partial}{\partial x}xf_{a-1}(x),
\end{equation}

i.e.
\begin{align}
f_a(x)=(-\frac{\partial}{\partial x}x)^a f_0(x)
\end{align}
and
\begin{align}
f_0(x)=\sum_{n=1}^\infty \mu\binom{2\mu}{\mu}\bigg(\frac{-1}{x}\bigg)^{\mu+1}=\frac{2}{\sqrt{x}{\sqrt{x+4}}^3}.
\end{align}
In the variable z we get
\begin{align}
f_0(z)=\frac{2z^2}{(z-1)(z+1)^3}
\end{align}
and
\begin{align}
f_0(\sigma(z))=f_0(\frac{1}{z})=\frac{2\frac{1}{z^2}}{(\frac{1}{z}-1)(\frac{1}{z}+1)^3}=-\frac{2z^2}{(z-1)(z+1)^3}=-f_0(z).
\end{align}
We find by induction
\begin{align}
f_a(z)+f_a(\sigma(z))=\frac{\partial}{\partial x}x(f_{a-1}(z)+f_{a-1}(\sigma(z)))=0
\end{align}
and hence
\begin{align}
W_{g,n}(z_1,z_2,\dots,z_n)+W_{g,n}(\sigma(z_1),z_2,\dots,z_n)&=\sum_{\underline{a}=\underline{0}}^{\text{fin}}B_{g,n}(\underline{a})[f_{a_1}(z)+f_{a_1}(\sigma(z))]\prod_{i=2}^n f_{a_i}(x_i)\\
&=0
\end{align}
Moreover note that \cref{a} reads
\begin{align}
f_a(z)&=\left(\frac{-z^2}{z^2-1}\frac{\partial}{\partial z}\frac{(z-1)^2}{z}\right) f_{a-1}(z)
\end{align}
in the variable $z$. It follows by induction that $f_a$ is rational and has a pole of order $1$ at $z=1$ and a pole of order $2a+3$ of $z=-1$.
\end{proof}

The last result can be reformulated in terms of the forms $\omega_{g,n}$.

\begin{corollary}
\label{cor:sym}
For $(g,n)\neq(0,1),(0,2)$, the forms $\omega_{g,n}(z_1,\dots,z_n)$ are antisymmetric w.r.t. $\sigma$, i.e.
\begin{align}
\omega_{g,n}(z_1,\dots,z_n)=-\omega_{g,n}(\sigma(z_1),z_2,\dots, z_n)=-\omega_{g,n}\left(\frac{1}{z_1},z_2,\dots, z_n\right).
\end{align}
They only have poles at $z=\pm 1$, where the pole at $z=1$ has at most order 1.
\end{corollary}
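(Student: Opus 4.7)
The plan is to deduce the corollary directly from Lemma \ref{cor:pole} by using that $x$ is invariant under $\sigma$. I will first verify the critical identity
\[x(\sigma(z)) = x(1/z) = \frac{(1/z-1)^{2}}{1/z} = \frac{(z-1)^{2}}{z} = x(z),\]
so that $\sigma^{*}x = x$ as functions on the spectral curve. Differentiating yields $\sigma^{*}dx = dx$: concretely, pulling back the form $dx(z_{1})$ along the substitution $z_{1}\mapsto\sigma(z_{1})=1/z_{1}$ reproduces $dx(z_{1})$, as is confirmed by a one-line computation using $x'(z)=(z^{2}-1)/z^{2}$.

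Next I will apply this to the definition of $\omega_{g,n}$ from \cref{TheoremTR}: for $(g,n)\neq(0,1),(0,2)$,
\[\omega_{g,n}(z_{1},\dots,z_{n}) = W_{g,n}(x(z_{1}),\dots,x(z_{n}))\,dx(z_{1})\cdots dx(z_{n}).\]
Substituting $z_{1}\mapsto\sigma(z_{1})$ and combining the $\sigma$-invariance of $dx$ just obtained with the antisymmetry $W_{g,n}(\sigma(z_{1}),z_{2},\dots,z_{n})=-W_{g,n}(z_{1},\dots,z_{n})$ of Lemma \ref{cor:pole}, the antisymmetry of $\omega_{g,n}$ in $z_{1}$ is immediate. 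Applying the same reasoning in each variable separately yields the full antisymmetry statement.

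For the pole assertion, I will again exploit Lemma \ref{cor:pole}. Using the polynomiality expansion
\[W_{g,n}(x(z_{1}),\dots,x(z_{n})) = \sum_{\underline{a}} B_{g,n}(\underline{a})\prod_{i=1}^{n} f_{a_{i}}(z_{i}),\]
together with the analysis in the proof of that lemma (each $f_{a_{i}}(z_{i})$ being rational with a simple pole at $z_{i}=1$ and a pole of order $2a_{i}+3$ at $z_{i}=-1$), one sees that $W_{g,n}$ has poles only at $z_{i}=\pm1$, at worst simple at $+1$. Since $dx(z_{1})=\tfrac{(z_{1}-1)(z_{1}+1)}{z_{1}^{2}}dz_{1}$ contributes a simple zero at each of $\pm1$, multiplication cancels the simple pole at $z_{1}=1$ (yielding regularity there) and reduces the pole order at $z_{1}=-1$ by one. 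Hence $\omega_{g,n}$ has poles only at $z_{i}=\pm1$, with order at most one at $+1$, as claimed.

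I do not expect a genuine obstacle: the whole argument is a bookkeeping translation of the properties of $W_{g,n}$ into properties of the multidifferential $\omega_{g,n}$. The only nontrivial point is recognising that $\sigma^{*}dx=dx$ as a direct consequence of $x\circ\sigma=x$, which is precisely what allows the antisymmetry and pole structure of $W_{g,n}$ to transfer cleanly to $\omega_{g,n}$.
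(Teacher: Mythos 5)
Your argument is correct and is essentially the paper's own proof (which consists of the single sentence that the assertions follow from \cref{cor:pole}), written out in full: the antisymmetry transfers because $x\circ\sigma=x$ forces $\sigma^{*}dx=dx$, and the pole structure transfers via the local behaviour of the $f_{a}$ and of $dx$ at $z=\pm1$. One small point worth adding: to conclude that $\omega_{g,n}$ has poles \emph{only} at $z_i=\pm1$ (a fact used later to move residues), you should also check that the double pole of $dx(z_i)=\tfrac{z_i^2-1}{z_i^2}\,dz_i$ at $z_i=0$ and the double pole of $dz_i$ at $z_i=\infty$ are cancelled by the double zero of each $f_{a_i}$ at $z_i=0$ and its $O(z_i^{-2})$ decay at infinity, both of which follow by induction from the explicit form of $f_0$ and the recursion defining $f_a$.
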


\begin{proof}
The assertions follow from the \cref{cor:pole}.
\end{proof}

Now we are ready to prove \cref{TheoremTR}.
\begin{proof}[Proof of \cref{TheoremTR}]
The initial data is given by $(g,n)=(0,1),(0,2)$ and the case of $(g,n)=(0,3)$ was proved in \cref{lem:03}. Thus, in the following we assume that $(g,n)\neq(0,1),(0,2),(0,3)$.\par
The idea is to add the recursions for $W_{g,n}(z_1,z_2,\dots,z_n)$ and $W_{g,n}(\sigma(z_1),z_2,\dots,z_n)$ and proceed with a careful combinatorial analysis after substituting the identity in \cref{cor:pole}.  
\begin{itemize}
\item
Firstly, we note that the left hand side yields the following
\begin{align}
&-(1+2x_1W_{0,1}(z_1))W_{g,n}(z_1,z_2,\dots,z_n) + -(1+2x_1W_{0,1}(\sigma(z_1)))W_{g,n}(\sigma(z_1),z_2,\dots,z_n)\\
&=-(1+2x_1W_{0,1}(z_1))W_{g,n}(z_1,z_2,\dots,z_n) + (1+2x_1W_{0,1}(\sigma(z_1)))W_{g,n}(z_1,z_2,\dots,z_n)\\
&=-2x_1[W_{0,1}(z_1)-W_{0,1}(\sigma(z_1))]W_{g,n}(z_1,z_2,\dots,z_n).
\end{align}
\item
The first term on the right hand side equals the term
\begin{align}
\sum_{j=2}^n\frac{\partial}{\partial x_j}\frac{x_j}{x_1}\frac{x_1W_{g,n-1}(Z_{\{1,\dots,n\}\setminus\{j\}})-x_jW_{g,n-1}(Z_{\{1,\dots,n\}\setminus\{1\}})}{x_1-x_j},
\end{align}
whith the same expression but with $z_1$ replaced by $\sigma(z_1)$. First we note that, we can rewrite the derivatives by
\begin{align}
\frac{\partial}{\partial x} = \frac{z^2}{z^2-1}\frac{\partial}{\partial z}.
\end{align} 
Now we want to focus on what happens when we replace $x(z_1)$ by $x(\sigma(z_1))$, using $x(\sigma(z))=x(z)$ we get terms of the form
\begin{align}
\frac{z_j^2}{z_j^2-1}\frac{\partial}{\partial z_j}\frac{x(z_j)}{x(z1)}\frac{x(\sigma(z_1))W_{g,n-1}(\sigma(z_1),Z_{\{2,\dots,n\}\setminus\{j\}})-x_jW_{g,n-1}(Z_{\{1,\dots,n\}\setminus\{1\}})}{x_1-x_j}.
\end{align}
By our observations the latter is equivalent to
\begin{align}
\frac{z_j^2}{z_j^2-1}\frac{\partial}{\partial z_j}\frac{x(z_j)}{x(z1)}\frac{-x_1W_{g,n-1}(Z_{\{1,\dots,n\}\setminus\{j\}})-x_jW_{g,n-1}(Z_{\{1,\dots,n\}\setminus\{1\}})}{x_1-x_j}.
\end{align}
Thus $x_1W_{g,n-1}(Z_{\{1,\dots,n\}\setminus\{j\}})$ will cancel in the sum and we end up with the term
\begin{align}
-2 \sum_{j=2}^n\frac{z_j^2}{z_j^2-1}\frac{\partial}{\partial z_j}\frac{x(z_j)^2}{x(z1)}\frac{W_{g,n-1}(Z_{\{1,\dots,n\}\setminus\{1\}})}{x_1-x_j}
\end{align}
\item
The second term on the right hand side is
\begin{align}
x_1 \bigg( W_{g-1,n+1}(z_1,z_1,z_2,\dots ,z_n) &+ W_{g-1,n+1}(\sigma(z_1),\sigma(z_1),z_2,\dots,z_n) \bigg) \\
& = -2x_1W_{g-1,n+1}(z_1,\sigma(z_1),z_2,\dots, z_n).
\end{align}
\item
The third term is
\begin{align}
x_1\bigg[\sum_{\substack{g_1+g_2=g \\ I\sqcup J = N\setminus\{1\}}}W_{g_1,|I|+1}(z_1,Z_I)W_{g_2,|J|+1}(z_1,Z_J) +
\sum_{\substack{g_1+g_2=g \\ I\sqcup J = N\setminus\{1\}}}W_{g_1,|I|+1}(\sigma(z_1),Z_I)W_{g_2,|J|+1}(\sigma(z_1),Z_J)\\
+2\sum_{j=2}^n\bigg( W_{0,2}(z_1,z_j)W_{g,n-1}(z_1,Z_{\{{1,\dots,n\}}\setminus\{j\}}) + W_{0,2}(\sigma(z_1),z_j)W_{g,n-1}(\sigma(z_1),Z_{\{{1,\dots,n\}}\setminus\{j\}})\bigg)\bigg].
\end{align}
By \cref{lem:02} and \cref{cor:sym} this yields
\begin{align}
-2x_1\bigg[\sum_{\substack{g_1+g_2=g \\ I\sqcup J = N\setminus\{1\}}}W_{g_1,|I|+1}(z_1,Z_I)W_{g_2,|J|+1}(\sigma(z_1),Z_J)
+ \sum_{j=2}^n\bigg(\frac{B(\sigma(z_1),z_j)}{\diff x(z_1)\diff x(z_j)}W_{g,n-1}(z_1,Z_{\{{1,\dots,n\}}\setminus\{j\}})\\+\frac{B(z_1,z_j)}{\diff x(\sigma(z_1))\diff x(z_j)}W_{g,n-1}(\sigma(z_1),Z_{\{{1,\dots,n\}}\setminus\{j\}})\bigg)\bigg].
\end{align}
\end{itemize}
As $(g,n)\neq(0,1),(0,2),(0,3)$ we have $(g,n-1)\neq (0,1),(0,2)$. Thus $W_{g,n-1}dx(z_1)\cdots dx(z_{n-1})$ satisfies \cref{cor:sym}. Therefore, putting things together, dividing by $-2x_1[W_{0,1}(x(z_1))-W_{0,1}(x(\sigma(z_1)))]$ and multiplying with $\diff x_1\dots \diff x_n$, we obtain
\begin{align}
\omega_{g,n}&(z_1,\dots,z_n)=\frac{1}{(W_{0,1}(x(z_1))-W_{0,1}(x(\sigma(z_1)))\diff x_1}\bigg[\sum_{j=2}^n\frac{\diff x_1\diff x_1}{x_1^2}\frac{z_j^2}{z_j^2-1}\frac{\partial}{\partial z_j}x_j^2\frac{\omega_{g,n-1}(z_2,\dots,z_n)}{x_1-x_j}\\
&+\omega_{g-1,n+1}(z_1,\sigma(z_1),z_2,\dots, z_n) + \sum_{\substack{g_1+g_2=g \\ I\sqcup J = N\setminus\{1\}}}^\circ\omega_{g_1,|I|+1}(z_1,Z_I)\omega_{g_2,|J|+1}(\sigma(z_1),Z_J)\\
&+ \sum_{j=2}\omega_{0,2}(\sigma(z_1),z_j)\omega_{g,n-1}(z_1,Z_{\{1,\dots,n\}\setminus\{j\}})+\omega_{0,2}(z_1,z_j)\omega_{g,n-1}(\sigma(z_1),Z_{\{1,\dots,n\}\setminus\{j\}})\bigg]\\
&=\frac{1}{\omega_{0,1}(z_1)-\omega_{0,1}(\sigma(z_1))}\bigg[\sum_{j=2}^n\frac{\diff x_1\diff x_1}{x_1^2}\frac{z_j^2}{z_j^2-1}\frac{\partial}{\partial z_j}x_j^2\frac{\omega_{g,n-1}(z_2,\dots,z_n)}{x_1-x_j}\\
&+\omega_{g-1,n+1}(z_1,\sigma(z_1),z_2,\dots, z_n) + \sum_{\substack{g_1+g_2=g \\ I\sqcup J = N\setminus\{1\}}}^\prime\omega_{g_1,|I|+1}(z_1,Z_I)\omega_{g_2,|J|+1}(\sigma(z_1),Z_J)\bigg].
\end{align}
The $\diff x_1$ in the denominator originates from the trivial expansion $\frac{\diff x_1}{\diff x_1}$. The next step is to apply Cauchys formula and use the fact that the $\omega_{g,n}$ are rational forms (in particular in $z_1$) having only poles at $\pm 1$. We have
\begin{align}
\omega_{g,n}(z_1,\dots,z_n) &= \Res_{z\to z_1}\frac{\omega_{g,n}(z,z_2,\dots, z_n)\diff z_1}{z-z_1}\\
&=\Res_{z\to\pm 1} \frac{\diff z_1}{z_1-z}\omega_{g,n}(z,z_2,\dots, z_n)\\
&=\Res_{z\to\pm 1} \frac{\diff z_1}{z_1-\frac{1}{z}}\omega_{g,n}(\frac{1}{z},z_2,\dots, z_n)\\
&=-\Res_{z\to\pm 1} \frac{\diff z_1}{z_1-\sigma(z)}\omega_{g,n}(z,z_2,\dots, z_n),
\end{align}
the second equality is due to the fact that $\omega_{g,n}$ are rational differentials in each $z_i$, hence the sum over all residue must vanish, i.e.
\begin{align}
0=\Res_{z\to z_1}\frac{\omega_{g,n}(z,z_2,\dots, z_n)\diff z_1}{z-z_1} + \Res_{z\to \pm 1}\frac{\omega_{g,n}(z,z_2,\dots, z_n)\diff z_1}{z-z_1}
\end{align}
where $\Res_{z\to \pm 1}$ denotes the sum of the residues at $1$ and $-1$. Thus we get
\begin{align}
\omega_{g,n}(z_1,\dots,z_n)&=\Res_{z\to\pm 1}\frac{1}{2}\bigg[\frac{\diff z_1}{z_1-z}-\frac{\diff z_1}{z_1-\sigma(z)}\bigg]\omega_{g,n}(z,z_2,\dots, z_n)\\
&=\Res_{z\to\pm 1}\bigg[\frac{1}{2}\int_{\sigma(z)}^z\omega_{0,2}(z_1,\cdot)\bigg]\omega_{g,n}(z,z_2,\dots, z_n).
\end{align}
Now we want to invoke the recursion for the $\omega_{g,n}$ which we established before. We get
\begin{align}
\omega_{g,n}(z_1,\dots,z_n)&=\Res_{z\to\pm 1}\frac{\frac{1}{2}\int_{\sigma(z)}^z\omega_{0,2}(z_1,\cdot)}{\omega_{0,1}(z)-\omega_{0,1}(\sigma(z))}\bigg[\sum_{j=2}^n\frac{\diff x\diff x}{x^2}\frac{z_j^2}{z_j^2-1}\frac{\partial}{\partial z_j}x_j^2\frac{\omega_{g,n-1}(z_2,\dots,z_n)}{x-x_j}\\
& +\omega_{g-1,n+1}(z,\sigma(z),z_2,\dots, z_n) + \sum_{\substack{g_1+g_2=g \\ I\sqcup J = N\setminus\{1\}}}\omega_{g_1,|I|+1}(z,Z_I)\omega_{g_2,|J|+1}(\sigma(z),Z_J)\bigg].
\end{align}
First we need to argue that the residue at $z=1$ does not contribute. But this is since
\begin{align}
K(z_1,z)=\frac{\frac{1}{2}\int_{\sigma(z)}^z \omega(z_1,\cdot)}{\omega_{0,1}(z)-\omega_{0,1}(\sigma(z))}=\frac{z(z-1)^3\diff z_1}{2(z+1)(z_1-z)(z_1z-1)\diff z},
\end{align}
i.e. $K$ has a zero of order 3 at $z=1$ which cancels the poles (of order 1) of the $\omega_{g,n}(z,z_2,\dots,z_n)$.
Hence last two terms on the right hand side vanish. For the first one, note that
\begin{align}
\frac{\diff x \diff x}{x^2}=\frac{(z+1)^2\diff z \diff z}{(z-1)^2 z^2}
\end{align}
has pole of order 2, so the zero of $K(z_1,z)$ cancels this as well. Lastly we show that the first term on the right hand side vanishes if we take the residue at $z=-1$. But by the last equation we see that the pole of order 1 of $K$ is removed by the zero of order 2. Thus we finally arrive at
\begin{align}
\omega_{g,n}(z_1,\dots,z_n)&=\Res_{z\to -1}\frac{\frac{1}{2}\int_{\sigma(z)}^z\omega_{0,2}(z_1,\cdot)}{\omega_{0,1}(z)-\omega_{0,1}(\sigma(z))}\bigg[\omega_{g-1,n+1}(z,\sigma(z),z_2,\dots, z_n)\\
 &+ \sum_{\substack{g_1+g_2=g \\ I\sqcup J = N\setminus\{1\}}}\omega_{g_1,|I|+1}(z,Z_I)\omega_{g_2,|J|+1}(\sigma(z),Z_J)\bigg].
\end{align}
\end{proof}

\section{Recursions for coverings of an arbitrary base curve}
\label{sec:refine}
In this section, we derive a recursion for monotone and Grothendieck dessins d'enfants coverings of arbitrary base curves. For $m\in\mathbb{N}$, we denote $[m]\coloneq\{1,\dots,m\}$. Furthermore, for a partition $\mu$, we denote the partition obtained by removing the entries in position $i_1,\dots,i_n$ (for some $n\le\ell(\mu)$ and $i_j\in[\ell(\mu)]$) by $\mu[i_1,\dots,i_n]$.
We introduce the following notation
\begin{notation}
For $g\ge0$ and $\nu$ be a partition of a positive integer $d$ of length $n$, we define
\begin{equation}
A_{g}(\nu)\coloneqq\left|\{(\alpha_1,\beta_1,\dots,\alpha_{g},\beta_{g})\in S_d^{2g}\colon C\left([\alpha_1,\beta_1]\cdots[\alpha_{g},\beta_{g}]\right)=\nu\}\right|.
\end{equation}

For each partition $\nu$, we fix a permutation
\begin{equation}
\sigma_{\nu}=\left(1\cdots\nu_1\right)\left(\nu_1+1\cdots\nu_1+\nu_2\right)\cdots\left(\sum_{i=1}^{n-1}\nu_i+1\cdots d\right),
\end{equation}
where for $k\in [n]$ the cycle $\left(\sum_{i=1}^{k-1}\nu_i+1\cdots \sum_{i=1}^k\nu_i\right)$ is labeled $k$.
\end{notation}

We now observe
\begin{equation}
\vec{H}_{g',\le}^{\bullet,g}(\mu)=\sum_{\nu\vdash d}\frac{\vec{h}^{\le,\bullet}_{g'}(\mu,\nu)}{|\mathrm{Aut}(\nu)|}\cdot\frac{ A_{g}(\nu)}{|C_{\nu}|}\quad\mathrm{and}\quad\vec{H}_{g',<}^{\bullet,g}(\mu)=\sum_{\nu\vdash d}\frac{\vec{h}^{<,\bullet}_{g'}(\mu,\nu)}{|\mathrm{Aut}(\nu)|}\cdot\frac{ A_{g}(\nu)}{|C_{\nu}|}
\end{equation}
for the usual (strictly) monotone double Hurwitz number $h^{\le,\bullet}_{g'}(\mu,\nu)$ (resp. $h^{<,\bullet}_{g'}(\mu,\nu)$). Thus, by finding a recursive method for computing the (strictly) monotone double Hurwitz numbers in genus $0$, we obtain a recursive method for all genera.\par 
For the rest of this section, we derive a recursive structure for (strictly) monotone double Hurwitz numbers. We note that the disconnected and connected (strictly) monotone double Hurwitz numbers are equivalent by the usual inclusion-exclusion principle. We can therefore focus on the connected numbers, which yield more compact formulas.\par
While monotone single Hurwitz numbers satisfy a recursive formula, a recursive formula for monotone Hurwitz numbers with an additional non-trivial ramification remains an open problem. However, one approach to this problem was introduced in \cite{DKmonotone} in the case of monotone orbifold Hurwitz numbers. More precisely, instead of considering monotone orbifold Hurwitz numbers, one considers a combinatorial refinement, which determines the former numbers and satisfy a recursion. In this section, we follow this philosophy and introduce a combinatorial refinement of (strictly) monotone double Hurwitz numbers and derive a recursion.

\begin{definition}
\label{def:refine}
Let $\mu,\nu$ be ordered partitions of $d$, let $i\in[\ell(\mu)],l\in[\nu_{\ell(\nu)}]$. We define $N^{\le;l,i}_g(\mu_i\mid\mu[i],\nu)$ to be the number of all tuples $(\sigma_1,\tau_1,\dots,\tau_b,\sigma_2)$, such that
\begin{enumerate}
\item $\sigma_1=\sigma_{\nu}$, (with $\sigma_\nu$ and its labelling as defined above),
\item $c(\sigma_1)=\nu$, $c(\sigma_2)=\mu$,
\item the cycles of $\sigma_2,\sigma_2$ are labeled
\item $\tau_b\cdots\tau_1\sigma_1=\sigma_2$,
\item $\langle\sigma_1,\tau_1,\dots,\tau_b,\sigma_2\rangle$ is a transitive subgroup,
\item for $\tau_i=(s_i\,t_i)$ ,where $s_i<t_i$, we have $t_i\le t_{i+1}$,
\item we have $\tau_b=(s_b\, t_b)$, where $t_b=\sum_{j=1}^{\ell(\nu)-1}\nu_j+l$ for some $l\in[\ell(\nu)]$,
\item $t_b$ is contained in the $i-$th cycle of $\sigma_2$.
\end{enumerate}
We also define the numbers
\begin{equation}
N_g^{\le;i}(\mu_i\mid\mu[i],\nu)=\sum_{l=1}^{\nu_n}N^{\le;l,i}_g(\mu_i\mid\mu[i],\nu)
\end{equation}
and
\begin{equation}
N_g^{\le}(\mu,\nu)=\sum_{i=1}^{\ell(\mu)}N_g^{\le;i}(\mu_i\mid \mu[i],\nu).
\end{equation}
Similarly, we define the notions for the strictly monotone case and denote the respective numbers by $N^{<;l,i}_g(\mu_i\mid\mu[i],\nu)$, $N_g^{<;i}(\mu_i\mid\mu[i],\nu)$ and $N_g^{<}(\mu,\nu)$.
\end{definition}

\begin{remark}
Before we state our recursion for the numbers $N^{\le;l,i}_g(\mu_i\mid\mu[i],\nu)$ and $N^{<;l,i}_g(\mu_i\mid\mu[i],\nu)$, we make the following remarks.
\begin{itemize}
\item We note that we can compute the monotone Hurwitz numbers $\vec{h}^{\le}_g(\nu,\mu)$ by considering all tuples $(\sigma_1,\tau_1,\dots,\tau_b,\sigma_2)$ satisfying conditons (2)--(6) in \cref{def:refine} as changing the order of the cycle types in the factorisations does not alter the enumeration. We use this convention in the proof of \cref{thm:recur}.
\item We see that $\vec{h}^{\le}_g(\nu,\mu)=\frac{|C_{\nu}|}{d!}\sum_{i=1}^mN_g^{\le}(\mu_i\mid \mu[i],\nu)$ and $\vec{h}^{<}_g(\nu,\mu)=\frac{|C_{\nu}|}{d!}\sum_{i=1}^mN_g^{<}(\mu_i\mid\mu[i],\nu)$. 
\item As we are concerned with (strictly) monotone factorisations in \cref{def:refine}, the numbers $t_b,t_b+1,t_b+2,\dots,|\nu|$ actually appear consecutively in the $i-$th cycle of $\sigma_2$ (see \cite{DKmonotone, Hahnmonodromy}), i.e. this cycle is of the form \begin{equation}
(\dots t_b\,t_{b}+1\,t_{b}+2\dots|\nu|).
\end{equation}
\item The above definition of $N^{\le,l}_g(\mu_i\mid\mu[i],\nu)$ is motivated by the notions in \cite{DKmonotone,Hahnmonodromy}. While of technical nature, they are more natural when considered in terms of \textit{monodromy graphs}, which is an approach taken in the works of Do--Karev and the first author. In their language, the numbers $N^{\le,l}_g(\mu_i\mid\mu[i],\nu)$ counts all \textit{"monodromy graphs, where the unique bold out-end is labeled $i$ with counter $l$"}. We note that these monodromy graphs are related to tropical covers but differ from the ones, we introduced in this work.
\end{itemize} 
\end{remark}

\begin{theorem}
\label{thm:recur}
Let $g$ be a non-negative integer and $\mu,\nu$ partitions of the same positive integer. Moreover, let $i\in[\ell(\mu)]$ and $l\le\nu_i$. Then we have
\begin{align}
N^{\le;l,i}_g(\mu_i\mid\mu[i],\nu)=\,&\Theta(\mu_i+l-\nu_n-1)\sum_{j\in [\ell(\mu)]\backslash\{i\}}\sum_{p=1}^lN_g^{\le;p,\ell(\mu)-1}(\mu_i+\mu_j\mid\mu^{i,j}[\ell(\mu)-1],\nu)\\
&+\sum_{\alpha+\beta=\mu_i}\sum_{p=1}^l\beta N_{g-1}^{\le;p,i}(\alpha\mid(\mu[i],\beta),\nu)\\
&+\sum_{\alpha+\beta=\mu_i}\sum_{g_1+g_2=g}\sum_{\substack{I_1\cup I_2=[\ell(\mu)]\backslash\{i\}\\J\subset [n-1]}}\sum_{p=1}^l\beta N_{g_1}^{\le;p,|I_1|+1}(\alpha\mid(\mu_{I_1},\alpha)[|I_1|+1],\nu_{J^c})\\
&N_{g_2}^{\le;|I_2|+1}(\beta\mid(\mu_{I_2},\beta),\nu_J)
\end{align}
and
\begin{align}
N^{<;l,i}_g(\mu_i\mid\mu[i],\nu)=\,&\Theta(\mu_i+l-\nu_n-1)\sum_{j\in [\ell(\mu)]\backslash\{i\}}\sum_{p=1}^{l-1}N_g^{<;p,\ell(\mu)-1}(\mu_i+\mu_j\mid\mu^{i,j}[\ell(\mu)-1],\nu)\\
&+\sum_{\alpha+\beta=\mu_i}\sum_{p=1}^{l-1}\beta N_{g-1}^{<;p,i}(\alpha\mid(\mu[i],\beta),\nu)\\
&+\sum_{\alpha+\beta=\mu_i}\sum_{g_1+g_2=g}\sum_{\substack{I_1\cup I_2=[\ell(\mu)]\backslash\{i\}\\J\subset [n-1]}}\sum_{p=1}^{l-1}\beta N_{g_1}^{<;p,|I_1|+1}(\alpha\mid(\mu_{I_1},\alpha)[|I_1|+1],\nu_{J^c})\\
&\left(\sum_{p=1}^{\nu_{\mathrm{max}(J)}}\left(N_{g_2}^{<;p,|I_2|+1}(\beta\mid(\mu_{I_2},\beta),\nu_J)\right)\right),
\end{align}
where $\Theta$ is the Heavyside step function, i.e. $\Theta(t)=0$ for $t<0$ and $\Theta(t)=1$ for $t\ge0$. Furthermore, we denote $\mu^{i,j}=\mu[i,j]\cup\{\mu_i+\mu_j\}$
\end{theorem}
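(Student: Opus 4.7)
The approach is the standard \emph{cut-and-join} argument for monotone factorisations, applied to the distinguished last transposition $\tau_b$ and refined by the extra data $(l,i)$. Fix a factorisation $(\sigma_1,\tau_1,\dots,\tau_b,\sigma_2)$ counted by $N_g^{\le;l,i}(\mu_i\mid\mu[i],\nu)$ and set $\sigma_2' := \tau_b\sigma_2 = \tau_{b-1}\cdots\tau_1\sigma_1$. Removing $\tau_b$ produces a shorter factorisation of the same basic shape, and we classify the original factorisations by the relationship of $\tau_b=(s_b\,t_b)$ to the cycles of $\sigma_2$. Since $t_b$ is constrained to lie in the $i$-th cycle of $\sigma_2$ (which has length $\mu_i$), there are three exhaustive cases: $s_b$ lies in a different cycle (join); $s_b$ lies in the $i$-th cycle and $\sigma_2'$ remains transitive (cut, same component); $s_b$ lies in the $i$-th cycle and the support of $\sigma_2'$ together with $\tau_1,\dots,\tau_{b-1},\sigma_1$ splits into two orbits (cut, split).

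First, I would show that the join case yields exactly the first sum. Here $\sigma_2'$ has cycles $\mu[i,j]$ together with a merged cycle $(\mu_i+\mu_j)$ which, by the monotone-Jucys--Murphy structure, is placed in position $\ell(\mu)-1$ of the reduced cycle labelling; the monotonicity constraint $t_{b-1}\le t_b$ on the now-last transposition allows $t_{b-1}=\sum_j\nu_j + p$ for any $p\in\{1,\dots,l\}$, producing the sum $\sum_{p=1}^l$. The strict-monotone version replaces $t_{b-1}<t_b$, hence $\sum_{p=1}^{l-1}$. The Heaviside factor $\Theta(\mu_i+l-\nu_n-1)$ encodes the combinatorial constraint that $t_b = d-\nu_n+l$ can only sit in a cycle of length $\mu_i$ if $\mu_i\ge\nu_n-l+1$, which is precisely $\mu_i+l-\nu_n-1\ge 0$.

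Second, I would treat the cut cases together. Writing the split cycle of $\sigma_2'$ as $(\alpha\,|\,\beta)$ with $\alpha+\beta=\mu_i$, the element $t_b$ remains in the part labelled by $\alpha$, while the freedom for $s_b$ in the other part of length $\beta$ produces the combinatorial factor $\beta$. If the resulting factorisation remains connected, the Riemann--Hurwitz count forces a genus drop by one, giving the $N_{g-1}^{\le;p,i}(\alpha\mid(\mu[i],\beta),\nu)$ term with the same $\sum_{p=1}^l$ (resp.\ $\sum_{p=1}^{l-1}$) sum. If instead $\sigma_2'$-data splits into two independent transitive factorisations, we sum over all partitions $g=g_1+g_2$ and all splittings $[\ell(\mu)]\setminus\{i\}=I_1\sqcup I_2$, $[n-1]\supseteq J$ of the remaining ramification, sending $(\alpha,\mu_{I_1})$ (together with the relevant parts of $\nu$) to the first component and $(\beta,\mu_{I_2})$ to the second. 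The $\alpha$-component still carries the distinguished last transposition so contributes $N_{g_1}^{\le;p,|I_1|+1}$; the $\beta$-component no longer does, so its contribution is the unrestricted $N_{g_2}^{\le;|I_2|+1}(\beta\mid\cdot)$ in the monotone case, and in the strictly monotone case the strictness is lost at the cut so one sums $p$ from $1$ to $\nu_{\max(J)}$, exactly the extra inner sum in the strict formula.

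The main obstacle is the careful bookkeeping in the split-component case: one must verify that the splitting of the partitions $\mu,\nu$ and of the transpositions $\tau_1,\dots,\tau_{b-1}$ between the two orbits is a bijection onto pairs of factorisations of the prescribed types, and that the cycle labelling conventions (in particular placing the newly-formed cycle in the last position) are consistent with the indices $|I_1|+1$ and $|I_2|+1$ appearing on the right-hand side. One must also verify that in the strict case exactly one extra degree of freedom is introduced at the cut, accounting for the asymmetry between the two components' inner sums. With these bookkeeping checks, assembling the three cases and their monotone/strict variants yields the claimed identities.
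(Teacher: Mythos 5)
Your proposal is correct and follows essentially the same route as the paper: a cut-and-join analysis of the last transposition $\tau_b$, splitting into the same three cases (join of two cycles of $\sigma_2$; cut preserving transitivity with a genus drop; cut separating the data into two transitive factorisations), with the same explanations for the Heaviside factor, the factor $\beta$, and the inner sums over $p$. The only cosmetic difference is that you name the cases relative to $\sigma_2$ while the paper names them relative to $\Sigma=\tau_b\sigma_2$ (cut / redundant join / essential join), which does not change the argument.
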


\begin{proof}
As the proofs are completely parallel, we restrict our discussion to the monotone numbers. We prove this theorem by a cut-and-join analysis of transitive monotone factorisations $(\sigma_{\nu},\tau_1,\dots,\tau_b,\sigma)$ of type $(g,\nu,\mu)$. Recall that $N^{\le;l,i}_g(\mu_i\mid\mu[i],\nu)$ counts the number of all tuples $(\sigma_1,\tau_1,\dots,\tau_b,\sigma_2)$ of type $(g,\nu,\mu)$ satisfying conditions (1)--(8) \cref{def:refine}, where $\tau_i=(s_i\, t_i)$ with $s_i<t_i$, such that $\sigma_1=\sigma_{\nu}$ and $s_b$ is contained in the cycle of $\sigma_2$ labeled $i$. We fix such a factorisation, define
\begin{equation}
\Sigma=\tau_{b-1}\cdots\tau_1\sigma_{\nu}
\end{equation}
and observe that $\eta=(\sigma_{\nu},\tau_1,\dots,\tau_{b-1},\Sigma)$ is a monotone factorisation as well. There are three cases:
\begin{enumerate}
\item \textbf{The transposition $\tau_b$ is a cut} for $\Sigma$, i.e. $s_b$ and $t_b$ are contained in the same cycle of $\Sigma$. Then left multiplication by $\tau_b$, i.e. $\tau_b\Sigma$ cuts the cycle of $\Sigma$ containing $s_b$ and $t_b$ into two cycles. Conversely, this means $s_b$ and $t_b$ are contained in different cycles of $\sigma$ and $\tau_b$ joins those cycles by left multiplication $\tau_b\sigma_2$ to a joint cycle in $\Sigma$.\par
Thus, if $\tau_b$ is a cut for $\Sigma$, any such transitive monotone factorisation $(\sigma_{\nu},\tau_1,\dots,\tau_{b},\sigma)$ contributing to $N^{\le;l,i}_g(\mu_i\mid\mu[i],\nu)$  yields a transitive monotone factorisation $(\sigma_{\nu},\tau_1,\dots,\allowbreak\tau_{b-1},\Sigma)$ contributing to $N_g^{\le;p,\ell(\mu)-1}(\mu_i+\mu_j\mid\mu^{i,j}[\ell(\mu)-1],\nu)$ for some $j\in[\ell(\mu)]\backslash\{i\}$ and $p\le l$. The other way round, we start with a tuple $(\sigma_{\nu},\tau_1,\dots,\tau_{b-1},\Sigma)$ contributing to $N_g^{\le;p,i}(\mu_i+\mu_j\mid\mu[i,j],\nu)$ for some $j\in[\ell(\mu)]\backslash\{i\}$ and $p\le l$. We analyse the number of possible transpositions $\tau_b$ which give a transitive monotone factorisation $(\sigma_{\nu},\tau_1,\dots,\tau_b,\sigma)$ with $\mathcal{C}(\sigma_2)=\mu$. The number $s_b$ is fixed to be $\sum_{i=1}^{n-1}\nu_i+l$. As mentioned before, the cycle of $\sigma_2$ containing $s_b$ (labeled $i$) must then be of shape
\begin{equation}
\left(\cdots t_b\, t_b+1\cdots\sum_{k=1}^n\nu_i-1\,\sum_{k=1}^n\nu_i\right).
\end{equation}
Thus the length $\mu_i$ of the cycle must be at least $\nu_n-l+1$, in other words $\Theta(\mu_i+l-\nu_n-1)$ must not vanish. Moreover, as $t_b$ is fixed and $s_b$ must be contained in the same cycle of $\Sigma$ as $t_b$, the value of $s_b$ is fixed as well. This yields the first summand.
\item \textbf{The transposition $\tau_b$ is a redundant join} for $\Sigma$, i.e. $r_b$ and $s_b$ are contained in two different cycles of $\Sigma$ and $\eta$ is a transitive monotone factorisation as well. Then left multipliciation by $\tau_b$, i.e. $\tau_b\Sigma$ joins the cycles of $\Sigma$ containing $s_b$ and $t_b$ respectively to one cycle. Conversely, this means $s_b$ and $t_b$ are contained in the same cycle of $\sigma_2$ and $\tau_b$ cuts this cycle by left multiplication $\tau_b\sigma_2$ to two cycles of $\Sigma$.\par
Thus, if $\tau_b$ is a redundant join for $\Sigma$ any such tuple $(\sigma_{\nu},\tau_1,\allowbreak\dots,\tau_b,\sigma_2)$ contributing to $N^{\le;l,i}_g(\mu_i\mid\mu[i],\nu)$ yields a tuple $(\sigma_{\nu},\tau_1,\dots,\allowbreak\tau_{b-1},\Sigma)$ contributing to
$N_{g-1}^{\le;p,i}(\alpha\mid(\mu[i],\beta),\nu)$ for some $i\in [\ell(\mu)]$, $\alpha+\beta=\mu_i$ (note that the genus drops by $1$ since the number of transpositions and the length of the second cycle type drop by $1$) and $p\ge l$. The other way, we start with a tuple $(\sigma_{\nu},\tau_1,\dots,\tau_{b-1},\Sigma)$ contributing to $ N_{g-1}^{\le;p,i}(\alpha\mid(\mu[i],\beta),\nu)$ for some $i\in [\ell(\mu)]$ and $\alpha+\beta=\mu_i$. We analyse the number of possible transposition $\tau_b$ which yield a tuple $(\sigma_{\nu},\tau_1,\dots,\tau_b,\sigma_2)$ contributing to $N^{\le;l,i}_g(\mu_i\mid\mu[i],\nu)$.\par 
The number $s_b$ is fixed to be $\sum_{i=1}^{n-1}\nu_i+l$. By definition $t_b$ is contained in a cycle of length $\alpha$. We need to choose $s_b$, such that $\tau_b=(s_b\,t_b)$ joint $\alpha$ with another cycle to a new cycle of length $\mu_i$. Thus, we need to choose $r_b$ from a cycle of length $\beta$. Moreover, we can choose $s_b$ arbitrarily, which yields a factor of $\beta$ and we obtain the second summand.
\item \textbf{The transposition $\tau_b$ is an essential join} for $\Sigma$, i.e. $s_b$ and $t_b$ are contained in two different cycles of $\Sigma$ and the group generated by $\eta$ is non-transitive with two orbits. Then left multiplication by $\tau_b$, i.e. $\tau_b\Sigma$ joins two cycles of $\Sigma$ containing $s_b$ and $t_b$ respectively to one cycle. Conversely, this means $s_b$ and $t_b$ are contained in the same cycle of $\sigma_2$ and $\tau_b$ cuts this cycle by left multiplication $\tau_b\sigma_2$ to two cycles of $\Sigma$.\par 
Thus, if $\tau_b$ is an essential join for $\Sigma$ any such tuple $(\sigma_{\nu},\allowbreak\tau_1,\dots,\tau_b,\sigma_2)$ contributing to $N^{\le;l,i}_g(\mu_i\mid\mu[i],\nu)$ yields two tuples $(\sigma_{\nu_J},\allowbreak\tau_{i(1)},\dots,\tau_{i(c)},\sigma_2^1)$ and $(\sigma_{\nu_{J^c}},\tau_{j(1)},\dots,\tau_{j(d)},\sigma_2^2)$ for 
\begin{enumerate}
\item $J\subset[n]$,
\item $c+d=b$,
\item $\sigma_{\nu_J}$ is the permutation obtained as the product of all disjoint cycles of $\sigma_{\nu}$ with label in $J$ and
\item $\sigma_2^1$ and $\sigma_2^2$ have disjoint orbits and $\sigma_2=\sigma_2^1\sigma_2^2$, where $\sigma_2^i$ recovers the labels from $\sigma_2$ for $i=1,2$.
\end{enumerate}
These tupel contribute to $\sum_{p=1}^l\beta N_{g_1}^{\le;p,|I_1|+1}(\alpha\mid(\mu_{I_1},\alpha)[|I_1|+1],\nu_{J^c})$ for $p\le l$ and to $N_{g_2}^{\le;|I_2|+1}(\beta\mid(\mu_{I_2},\beta),\nu_J)$ respectively. By the same arguments as in the previous case we obtain a factor of $\beta$ and thus obtain the third summand, which completes the proof of the theorem. \qedhere
\end{enumerate}
\end{proof}

\appendix
\section{Examples}\label{sec:examples}
Here we provide some examples of the quasimodular $q$-expansions of $\sum_d H^{g,d}_{k,l,m}(\mu) q^d $, computed using Sage \cite{sagemath}. See \cite[Section 10]{RY10} for an extensive list of quasimodular forms corresponding to simple Hurwitz numbers ($l=m=0, \mu=()$). 
{\scriptsize
\begin{align}
\sum_d H^{2,d}_{2,0,0}() q^d &= \frac{1}{2^63^45}\left( 5 P^3-3 P Q-2 R\right) \\
&=2q^{2} + 16q^{3} + 60q^{4} + 160q^{5} + 360q^{6} + 672q^{7} + 1240q^{8} + 1920q^{9} + 3180q^{10} + 4400q^{11} + 6832q^{12} + O(q^{13})\\[5pt]
\sum_d H^{2,d}_{0,2,0}() q^d &= \frac{1}{2^73^45}\left( 5 P^3-3 P Q-2 R+45 P^2+18 Q+90 P-153\right)\\
&=2q^{2} + 13q^{3} + 44q^{4} + 109q^{5} + 235q^{6} + 422q^{7} + 760q^{8} + 1151q^{9} + 1875q^{10} + 2555q^{11} + 3927q^{12} + O(q^{13})\\[5pt]
\sum_d H^{2,d}_{0,0,2}() q^d &= \frac{1}{2^73^45}\left( 5 P^3-3 P Q-2 R-45 P^2-18 Q-90 P+153\right)\\
&=3q^{3} + 16q^{4} + 51q^{5} + 125q^{6} + 250q^{7} + 480q^{8} + 769q^{9} + 1305q^{10} + 1845q^{11} + 2905q^{12} + O(q^{13})
\end{align}
\begin{align}\begin{split}
\sum_d H^{3,d}_{2,0,0}(3) q^d &=\frac{1}{2^{13}3^55^2}\Big( -875 P^5+1775 P^3 Q-10 P^2 R-894 P Q^2+4 Q R+\\&\quad\quad\quad\quad 750 P^4-1710 P^2 Q+60 P R+900 Q^2+135 P^3-81 P Q-54 R\Big)\\
&=36 q^{3} + 540 q^{4} + 3606 q^{5} + 15726 q^{6} + 53298 q^{7} + 149142 q^{8} + 367920 q^{9} + 815886 q^{10} + 1668150 q^{11} + 3202374 q^{12} + O(q^{13})\\[5pt]
\sum_d H^{3,d}_{0,2,0}(3) q^d &= \frac{1}{2^{14}3^55^2}\Big( -875 P^5+1775 P^3 Q-10 P^2 R-894 P Q^2+4 Q R+
						\\&\quad\quad\quad\quad-2625  P^4-630  P^2 Q+\tfrac{23460}{7} P R+\tfrac{648}{7} Q^2+2835  P^3+1359 P Q-5814R
						 +3150 P^2-4608 Q+7020 P-4131\Big)\\
&=27 q^{3} + 369 q^{4} + 2337 q^{5} + 9795 q^{6} + 32307 q^{7} + 88446 q^{8} + 214536 q^{9} + 469230 q^{10} + 948600 q^{11} + 1803375 q^{12} + O(q^{13})\\[5pt]
\sum_d H^{3,d}_{0,0,2}(3) q^d &= \frac{1}{2^{14}3^55^2}\Big( -875 P^5+1775 P^3 Q-10 P^2 R-894 P Q^2+4 Q R+
						\\&\quad\quad\quad\quad+4125  P^4-2790  P^2 Q+\tfrac{22620}{7} P R+\tfrac{11952}{7} Q^2-2565  P^3-1521 P Q+5706 R+
						-3150 P^2+4608 Q-7020 P+4131\Big)\\	
&=9 q^{3} + 171 q^{4} + 1269 q^{5} + 5931 q^{6} + 20991 q^{7} + 60696 q^{8} + 153384 q^{9} + 346656 q^{10} + 719550 q^{11} + 1398999 q^{12} + O(q^{13})
\end{split}
\end{align}
}

\bibliographystyle{alpha}
\bibliography{literature.bib}
\end{document}